\newcommand{\R}{\mathbb{R}}
\newcommand{\C}{\mathbb{C}}
\newcommand{\sh}{\mathrm{sin.h}}
\newcommand{\ch}{\mathrm{cos.h}}
\title{Orbital integrals on Lorentzian symmetric spaces}
\author{\textsc{Thibaut Grouy}\footnote{\textsc{D\'epartement de Math\'ematique, Universit\'e libre de Bruxelles}\newline Campus de la Plaine, CP 218, Boulevard du Triomphe, 1050 Brussels, Belgium\newline
\emph{E-mail address} : \texttt{tgrouy@ulb.ac.be}}
}
\date{}
\newtheorem{theorem}{Theorem}
\newtheorem{lemma}[theorem]{Lemma}
\newtheorem{proposition}[theorem]{Proposition}
\newtheorem{corollary}[theorem]{Corollary}
\theoremstyle{remark}
\newtheorem{remark}{Remark}
\theoremstyle{definition}
\newtheorem{definition}{Definition}
\begin{document}

\maketitle

\begin{abstract}
In this paper, we address the problem of determining a function in terms of its orbital integrals on Lorentzian symmetric spaces.
It has been solved by S. Helgason \cite{Hel59} for even-dimensional isotropic Lorentzian symmetric spaces via a limit formula involving the Laplace-Beltrami operator. The result has been extended by J. Orloff \cite{Orl87} for rank-one semisimple pseudo-Riemannian symmetric spaces giving the keys to treat the odd-dimensional isotropic Lorentzian symmetric spaces.
Indecomposable Lorentzian symmetric spaces are either isotropic or have solvable transvection group.
We study orbital integrals including an inversion formula on the solvable ones which have been explicitly described by M. Cahen and N. Wallach \cite{CaWa70}.
\newline

\noindent \textbf{Keywords} : \textit{symmetric spaces; Lorentzian metrics; orbital integrals; Riesz potentials.}

\noindent \textbf{MSC2010} : 43A85, 53C50, 44A99, 53C65, 53C35.
\end{abstract}


\section*{Introduction}

On a pseudo-Riemannian space $(M, \hat{g})$, one defines \emph{pseudo-spherical integrals} which are parametrized by a pseudo-radial coordinate $r$ and a sign. They are denoted $M^r_\pm$ and associate to any compactly supported continuous function $f$ on $M$ the functions $(M^r_\pm f)$ on $M$ whose value at a point $x$ is given by the integrals of $f$ over pseudo-spheres centered at $x$, namely
$$\mathrm{Exp}_x (\Sigma_{\pm r^2} (x)), \quad \text{where } \Sigma_{\pm r^2} (x) := \lbrace X \in T_x M \mid \hat{g}_x (X, X) = \pm r^2 \rbrace$$
and $\mathrm{Exp}_x$ is the exponential mapping at $x$ associated to the Levi-Civita connection.
If the metric $\hat{g}$ is Lorentzian, we integrate $f$ over the connected components of the pseudo-spheres at $x$ to define its pseudo-spherical integrals at $x$.
When the space $M$ is $G$-homogeneous for a Lie subgroup $G$ of the isometry group, one also defines the \emph{orbital integrals} of $f$ at any point $x$ as the integrals of $f$ over orbits of the isotropy group $K_x$ of $x$ in $G$, with respect to an invariant measure, provided it exists.
If the pseudo-spheres centered at any point $x$ as well as the light cone $\lbrace \mathrm{Exp}_x (X) \mid X \in T_x M, \, X \neq 0, \, \hat{g}_x (X, X) = 0 \rbrace$ are orbits of $K_x$, then $M$ is said to be \emph{isotropic} ; in that case, the pseudo-spherical integrals of $f$ are equal to its orbital integrals.
\newline

If the metric $\hat{g}$ is Riemannian, only the integral operators $M^r_+$ make sense and they are called \emph{spherical integrals}. Assuming that the space $M$ is two-point homogeneous, they appear when the totally geodesic Radon transforms are composed with dual transforms. An inversion formula for these Radon transforms is obtained by applying the Laplace-Beltrami operator to the result of this composition.
The proof uses the following Darboux equation
$$L_x (M^r_+ f) (x) = L_r (M^r_+ f) (x)$$
where $L_r$ is the radial part of $L$ (see details in \cite{Hel59} for spaces of constant curvature and \cite{Hel10} for general two-point homogeneous spaces : the compact ones have been uniformly treated in \cite{Hel65a} and the non-compact ones in \cite{Rou01}).
Spherical integrals also allow to inverse the Fourier transform defined on Riemannian symmetric spaces of the non-compact type by S. Helgason in \cite{Hel65b}. Concretely, they are used to reduce the problem to Harish-Chandra's inversion formula for the spherical Fourier transform on semisimple Lie groups (see details in \cite{Hel94}, chapter III).
\newline

Orbital integrals were first defined on Lie groups : they are integrals over the conjugacy classes and they play a role in Harmonic Analysis. Indeed, in \cite{HaCh51}, Harish-Chandra gave the Plancherel formula for a Fourier transform he defined on any complex semisimple Lie group extending the result of Gelfand and Naimark \cite{GelNai48} on $\mathrm{SL} (n, \C)$.
Later, Gelfand and Graev noticed \cite{GelGra55} that the Plancherel formula for this Fourier transform on classical complex Lie groups is obtained from a limit formula which expresses the value of a function at the neutral element in terms of its orbital integrals.
In \cite{HaCh56}, Harish-Chandra gave a limit formula for the orbital integrals on any real semisimple Lie group.
Even in this case, it turns out to be useful to get a Plancherel formula. A. Bouaziz did the same for the orbital integrals on any real reductive Lie group $G$ \cite{Bou94} and P. Harinck on the quotient $G_\C / G$ where $G_\C$ is the complexified group of $G$ \cite{Har95}.
\newline

We are interested in the general problem of determining a function in terms of its orbital integrals on symmetric spaces.  Like in the group case, we expect to solve it with a limit formula.
On isotropic Riemannian symmetric spaces, the orbital integral problem is trivially solved since the orbits of the isotropy group of any point $x$ are the spheres centered at $x$ and they shrink to the point as their radius $r$ goes to zero. In other words,
$$f(x) = \underset{r \rightarrow 0^+}{\lim}\; (M^r_+ f) (x).$$
In contrast, on isotropic Lorentzian symmetric spaces, the "limit" of the pseudo-spheres as their pseudo-radius $r$ goes to zero is the light cone. However, there still exists a solution to the orbital integral problem.
Indeed, if the dimension of the space $n > 2$ is even, the expression of a function $f$ in terms of its orbital integrals is given as the following limit formula due to S. Helgason \cite{Hel59}
$$f(x) = c \; \underset{r \rightarrow 0^+}{\lim}\; r^{n-2} P (L) (M^r_+ f) (x),$$
where $P$ is a polynomial, $L$ is the Laplace-Beltrami operator associated to the metric and $c$ is a real constant.
J. Orloff generalized this limit formula to semisimple pseudo-Riemannian symmetric spaces of rank one \cite{Orl87}.
In this paper, we address the orbital integral problem on all other indecomposable Lorentzian symmetric spaces using the known classification of them.
More precisely, when the indecomposable Lorentzian symmetric space is not isotropic, its transvection group is solvable and the orbits of the isotropy group are parametrized by two variables. A limit formula is then obtained for the associated orbital integrals.
In summary, we get the following result.

\begin{theorem}\label{maintheorem}
On any connected simply connected indecomposable Lorentzian symmetric space, any compactly supported smooth function is determined in terms of its orbital integrals via a limit formula involving invariant differential operators.
\end{theorem}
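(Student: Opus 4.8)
The plan is to argue by cases according to the classification of indecomposable Lorentzian symmetric spaces recalled in the introduction. When $M$ is isotropic, the $K_x$-orbits are exactly the connected components of the pseudo-spheres together with the light cone, so the orbital integrals coincide with the pseudo-spherical integrals $M^r_\pm$, and the desired limit formula is Helgason's \cite{Hel59} in even dimension and follows, in odd dimension, from Orloff's treatment of rank-one semisimple pseudo-Riemannian symmetric spaces \cite{Orl87}. The entire remaining content of the theorem therefore concerns the case in which the transvection group $G$ of $M$ is solvable and $M = G/K$ is one of the spaces explicitly described by Cahen and Wallach \cite{CaWa70}; I would fix the standard model of $M$ as $\mathbb{R}^n$ equipped with the plane-wave metric determined by a nonsingular symmetric operator on $\mathbb{R}^{n-2}$, together with its transvection group $G$ and the isotropy group $K$ of a base point.

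First I would analyse the geometry of the $K$-action: describe $\mathrm{Exp}_x$, identify the $K_x$-orbits through a point — these are, as noted in the introduction, parametrized by a pair $(s,t)$ of real variables — and produce a $K_x$-invariant measure on each orbit, so that the orbital integrals $(O^{(s,t)} f)(x)$ are well defined for $f \in C_c^\infty(M)$. The next step is to establish a Darboux-type equation: regarding the orbital integral as a function of $(s,t)$, I would compute the radial part of the Laplace-Beltrami operator $L$ — and, if necessary, of further generators of the algebra $\mathbb{D}(G/K)$ of invariant differential operators on $M$ — along the orbits, obtaining an identity of the shape $L_x (O^{(s,t)} f)(x) = L_{(s,t)} (O^{(s,t)} f)(x)$ with $L_{(s,t)}$ an explicit differential operator in the two parameters. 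This reduces the inversion problem to a low-dimensional analytic question about the family $(O^{(s,t)} f)(x)$.

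To carry out the inversion I would introduce Riesz-type potentials attached to the orbital integrals: integrate $(O^{(s,t)} f)(x)$ against a kernel built from powers of $s$ and $t$ (suitably interpreted across the light cone, as in the isotropic Lorentzian case), depending holomorphically on a complex parameter $\alpha$ in a half-plane. Using the Darboux equation and integration by parts one obtains a functional equation relating the potential at $\alpha$ to its value at $\alpha + 2$ through an operator in $\mathbb{D}(G/K)$; this yields the meromorphic continuation of the family in $\alpha$ and identifies the special value $\alpha_0$ at which the potential equals a constant multiple of $f$. Pushing the functional equation down to $\alpha_0$ converts the analytic continuation into the application of a fixed invariant differential operator, and the residual integral collapses, as $(s,t) \to 0$, to a limit formula of the announced type $f(x) = c\, \lim_{(s,t)\to 0} w(s,t)\, D (O^{(s,t)} f)(x)$ with $D \in \mathbb{D}(G/K)$ and $w$ an explicit weight.

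The main obstacle I anticipate is this analytic continuation step and the precise location and normalization of the value $\alpha_0$: because the orbits do not shrink to a point but degenerate onto the light cone as the parameters tend to zero, the naive limit of the orbital integrals is not $f(x)$, and the correct weight $w$ and operator $D$ emerge only from a careful residue and pole analysis of the Riesz family — an analysis whose outcome, as in Helgason's and Orloff's results, is expected to depend on the parity of $\dim M$ and on the multiplicities of the curvature eigenvalues of the Cahen-Wallach space, so these subcases would have to be handled and then unified. A secondary point requiring care is verifying that the operator $D$ so produced is genuinely $G$-invariant, i.e.\ lies in $\mathbb{D}(G/K)$, which is what makes the phrase ``a limit formula involving invariant differential operators'' literally correct.
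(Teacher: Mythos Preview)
Your overall strategy matches the paper's: split by the Cahen--Wallach classification, quote Helgason/Orloff in the isotropic case, and in the solvable case describe the $K$-orbits (parametrized by two real numbers), set up Riesz-type potentials, derive a functional equation, and read off a limit formula depending on the parity of $n$. Where the paper departs from your outline is in the specific invariant operator driving the inversion and in how the Riesz family is organized. The decisive simplification is that $M$ carries a \emph{first-order} $G$-invariant operator $\partial/\partial v$ (coming from the central direction of the Heisenberg factor), whose radial expression is simply $\pm\,b_0 r^{-1}\partial_r$; the functional equation for the Riesz potentials is obtained from this operator rather than from the Laplace--Beltrami operator, which makes the recursion $J^{\mu}\mapsto J^{\mu-2}$ elementary. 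Correspondingly, the Riesz potentials are defined with the second orbit parameter $b_0$ \emph{fixed} and only $r$ integrated, and one then observes that for fixed $b_0$ they are literally flat Lorentzian Riesz potentials on $\mathbb{R}^{1,n-2}$---so Orloff's flat theory applies directly and nothing depends on the curvature eigenvalues $\lambda_i$, contrary to your expectation. The resulting limit is iterated, first $r\to 0^+$ and then $b_0\to 0$, rather than a joint limit $(s,t)\to 0$; and the invariant operator $D$ in the final formula is a power of $\partial/\partial v$, so the $G$-invariance you flagged as a concern is immediate.
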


Section \ref{sectionsymmetric} introduces the necessary background about symmetric spaces and gives the classification of indecomposable Lorentzian symmetric spaces due to M. Cahen and N. Wallach \cite{CaWa70}. 
Orbital integrals are defined in section \ref{sectionrankoneorbital} for semisimple pseudo-Riemannian symmetric spaces. We lay out S. Helgason's determination of a function in terms of its orbital integrals via a limit formula for even-dimensional isotropic Lorentzian symmetric spaces and J. Orloff's generalization which gives the keys to treat the odd-dimensional isotropic Lorentzian symmetric spaces.
The proof of these limit formulas uses some integral operators called Riesz potentials and the Laplace-Beltrami operator associated to the metric.
In section \ref{sectionsolvable}, we give an explicit description of the model spaces for solvable Lorentzian symmetric spaces. We compute the exponential mapping at any point and determine the orbits of the isotropy group.
Section \ref{sectionsolvableorbital} presents the steps and arguments leading to a limit formula to determine a function in terms of its orbital integrals on solvable Lorentzian symmetric spaces.


\section{Framework of symmetric spaces}\label{sectionsymmetric}

\subsection{General features of symmetric spaces}

We follow the description of symmetric spaces by O. Loos \cite{Loo69}.

\begin{definition}A \emph{symmetric space} is a smooth manifold $M$ endowed with a smooth map
$$s : M \times M \rightarrow M : (x, y) \mapsto s_x (y)$$
such that
\begin{enumerate}
\item $\forall x \in M$, $s_x$ is an involutive diffeomorphism,
\item $\forall x \in M$, $x$ is an isolated fixed point of $s_x$,
\item $\forall x, y$, $s_x \circ s_y \circ s_x = s_{s_x (y)}$.
\end{enumerate}
The diffeomorphism $s_x$ is called a \emph{symmetry} at $x$.\end{definition}

For any connected symmetric space $(M, s)$, we define the \emph{transvection group} $G (M,s)$ as the group generated by the automorphisms $s_x \circ s_y$ for any $x, y \in M$. There exists a Lie group structure on $G (M, s)$ such that it acts transitively on $M$. Therefore, $M$ is a homogeneous space.
\newline

Let us fix a base-point $x_0$ of $M$. We then consider the involutive automorphism of $G (M, s)$ defined as the conjugation by the symmetry at $x_0$. Then its differential at the neutral element of $G (M,s)$, denoted by $\sigma$ is an involutive automorphism of the Lie algebra $\mathfrak{g}$ of $G (M,s)$. It induces a decomposition
$$\mathfrak{g} = \mathfrak{k} \oplus \mathfrak{p}$$
into the eigenspaces $\mathfrak{k}$, $\mathfrak{p}$ of $\sigma$ with respect to the eigenvalues $1$, $-1$ respectively. In particular,
$$[\mathfrak{p}, \mathfrak{p}] = \mathfrak{k}, \quad [\mathfrak{p}, \mathfrak{k}] \subset \mathfrak{p}, \quad [\mathfrak{k}, \mathfrak{k}] \subset \mathfrak{k}.$$

\begin{definition}
A \emph{symmetric Lie algebra} is a finite dimensional real Lie algebra $\mathfrak{g}$ endowed with an involutive automorphisme $\sigma$ of $\mathfrak{g}$. This pair $(\mathfrak{g}, \sigma)$ is a \emph{transvection} symmetric Lie algebra if, in addition, $\mathfrak{k} = [\mathfrak{p}, \mathfrak{p}]$ where $\mathfrak{k}$, $\mathfrak{p}$ are the eigenspaces for $\sigma$ with respect to the eigenvalues $1$, $-1$ respectively.
\end{definition}

\begin{definition}
A transvection symmetric Lie algebra $(\mathfrak{g}, \sigma)$ is said to be \emph{effective} if one of the following equivalent conditions is satisfied
\begin{itemize}
\item $\mathfrak{k}$ contains no nonzero ideal of $\mathfrak{g}$,
\item $\mathfrak{k} \cap \mathfrak{z} (\mathfrak{g}) = \lbrace 0 \rbrace$,
\item the map $\mathrm{ad}_\mathfrak{g} ( \cdot ) \vert_\mathfrak{p} : \mathrm{k} \rightarrow \mathfrak{gl} (\mathfrak{p})$ is injective.
\end{itemize}
\end{definition}

The transvection symmetric Lie algebra $(\mathfrak{g}, \sigma)$ defined above for a connected symmetric space $(M, s)$, where $\mathfrak{g}$ is the Lie algebra of the transvection group $G (M)$ and $\sigma$ the differential of the map $(g \mapsto s_{x_0} \circ g \circ s_{x_0})$, is effective.
Conversely, for any symmetric Lie algebra $(\mathfrak{g}, \sigma)$, there exists a connected, simply connected symmetric space $(M,s)$ which is $G$-homogeneous for a Lie subgroup $G$ of the automorphisms of $(M,s)$ whose Lie algebra is $\mathfrak{g}$ and such that the differential of the induced involution is $\sigma$.
\newline

On the other hand, there exists a unique affine connection on $M$ such that every symmetry is an affine transformation.
This connection is complete, that is every geodesic is defined on $\R$. Therefore, at any $x \in M$, the exponential mapping, denoted by $\mathrm{Exp}_{x}$, is defined on the whole $T_x M$.
The well-known property about the exponential mapping of a symmetric space $(M,s)$ is 
$$\mathrm{Exp}_{x_0} (X) = \exp (\bar{X}).x_0, \quad \text{for } X \in T_{x_0} M$$
where $\exp : \mathfrak{g} \rightarrow G$ is the exponential mapping of the Lie group $G$ and $\bar{X}$ is the unique element in $\mathfrak{p}$ such that $\phi (\bar{X}) = X$ using the isomorphism
$$\phi : \begin{cases} \mathfrak{p} &\rightarrow T_{x_0} M\\
\bar{X} &\mapsto \frac{d}{dt} \big\vert_0 \exp (t \bar{X}).x_0 
\end{cases}.$$

In 1959 \cite{Hel59}, Sigurdur Helgason gave an explicit expression of the differential of the exponential mapping which is very useful for the computations.

\begin{lemma}\label{ExponentDiff}
Let $(M,s)$ be a symmetric space and $x_0$ be a base-point of $M$. If $\mathfrak{g} = \mathfrak{k} \oplus \mathfrak{p}$ is the decomposition of the transvection Lie algebra with respect to the natural involution, then the exponential mapping $\mathrm{Exp}_{x_0} : T_{x_0} M \rightarrow M$ associated to the canonical affine connection has differential
$$\big( \mathrm{Exp}_{x_0} \big)_{\ast X} = \tau (\exp (X))_{\ast x_0} \circ \sum_{k=0}^\infty \frac{\mathrm{ad} (X)\big\vert_\mathfrak{p}^{2k}}{(2k+1)!} , \quad \text{for } X \in T_{x_0} M \simeq \mathfrak{p},$$
where $\tau (\exp (X))$ is the diffeomorphism defined by the action of the group element $\exp (X)$ on $M$ and $\mathrm{ad}$ the adjoint action on the Lie algebra $\mathfrak{g}$.
\end{lemma}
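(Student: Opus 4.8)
The plan is to reduce the statement to the classical formula for the differential of the Lie-group exponential and to exploit the parity of $\mathrm{ad}(X)$ with respect to $\mathfrak{g} = \mathfrak{k}\oplus\mathfrak{p}$ when $X\in\mathfrak{p}$. First I would write $\mathrm{Exp}_{x_0} = \pi\circ\exp$, restricted to $\mathfrak{p}$, where $\pi : G\rightarrow M$ is the orbit map $\pi(g) = \tau(g)(x_0) = g.x_0$; this uses the identification $T_{x_0}M\simeq\mathfrak{p}$ furnished by $\phi$ together with the quoted property $\mathrm{Exp}_{x_0}(X) = \exp(\bar X).x_0$. Differentiating the identity $\pi\circ L_g = \tau(g)\circ\pi$, with $L_g$ the left translation by $g$ in $G$, at the neutral element gives $\pi_{\ast g}\circ (L_g)_{\ast e} = \tau(g)_{\ast x_0}\circ\pi_{\ast e}$. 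Moreover the kernel of $\pi_{\ast e} : \mathfrak{g}\rightarrow T_{x_0}M$ is the Lie algebra of the isotropy group of $x_0$, which is $\mathfrak{k}$, so under the identification $T_{x_0}M\simeq\mathfrak{p}$ the map $\pi_{\ast e}$ becomes the projection $\mathrm{pr}_{\mathfrak{p}} : \mathfrak{k}\oplus\mathfrak{p}\rightarrow\mathfrak{p}$.

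Next I would invoke the classical formula $(\exp)_{\ast X} = (L_{\exp X})_{\ast e}\circ\sum_{k=0}^{\infty}\frac{(-\mathrm{ad}(X))^{k}}{(k+1)!}$ for the differential of the exponential of $G$ at $X\in\mathfrak{g}$. Composing with $\pi_{\ast\exp X}$, using the chain rule for $\mathrm{Exp}_{x_0} = \pi\circ\exp$ and the relation above with $g = \exp X$, I obtain
$$(\mathrm{Exp}_{x_0})_{\ast X} = \tau(\exp X)_{\ast x_0}\circ\mathrm{pr}_{\mathfrak{p}}\circ\sum_{k=0}^{\infty}\frac{(-\mathrm{ad}(X))^{k}}{(k+1)!}\Big|_{\mathfrak{p}}.$$
Then I would invoke the bracket relations $[\mathfrak{p},\mathfrak{p}]\subset\mathfrak{k}$ and $[\mathfrak{p},\mathfrak{k}]\subset\mathfrak{p}$: for $X\in\mathfrak{p}$ the operator $\mathrm{ad}(X)$ sends $\mathfrak{p}$ into $\mathfrak{k}$ and $\mathfrak{k}$ into $\mathfrak{p}$, so for $Y\in\mathfrak{p}$ the vector $\mathrm{ad}(X)^{k}Y$ lies in $\mathfrak{p}$ when $k$ is even and in $\mathfrak{k}$ when $k$ is odd. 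Hence $\mathrm{pr}_{\mathfrak{p}}$ annihilates every odd-order term, whereas on the even-order ones $(-\mathrm{ad}(X))^{2k} = \mathrm{ad}(X)^{2k} = \mathrm{ad}(X)\vert_{\mathfrak{p}}^{2k}$; what remains is exactly $\sum_{k=0}^{\infty}\frac{\mathrm{ad}(X)\vert_{\mathfrak{p}}^{2k}}{(2k+1)!}$, which is the claimed identity.

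I do not anticipate a genuine obstacle; the only point demanding care is the bookkeeping of the two identifications $T_{x_0}M\simeq\mathfrak{p}$ and $\pi_{\ast e}\simeq\mathrm{pr}_{\mathfrak{p}}$, and of keeping apart the left translation $L_g$ in $G$ from the action $\tau(g)$ on $M$, so that $\tau(\exp X)_{\ast x_0}$ indeed comes out as a left factor. An alternative, more geometric route would be to recognise the image under $(\mathrm{Exp}_{x_0})_{\ast X}$ of a vector $Y$ as the time-$1$ value of the Jacobi field along the geodesic $t\mapsto\mathrm{Exp}_{x_0}(tX)$ with initial data determined by $Y$, and to solve the Jacobi equation using that the curvature tensor of a symmetric space is parallel and is given on $\mathfrak{p}$ by $R(Y,Z)W = -[[Y,Z],W]$; this reproduces the same series but requires more differential-geometric machinery, so I would prefer the chain-rule computation sketched above.
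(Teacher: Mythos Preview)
The paper does not actually prove this lemma: it is stated as a result of Helgason \cite{Hel59} and used as background without proof. Your argument is correct and is in fact the standard proof one finds in Helgason's work: factor $\mathrm{Exp}_{x_0}$ as $\pi\circ\exp\vert_{\mathfrak p}$, apply the classical formula for $(\exp)_{\ast X}$, pull $\tau(\exp X)_{\ast x_0}$ out via the equivariance of $\pi$, and then use the parity of $\mathrm{ad}(X)$ on $\mathfrak{k}\oplus\mathfrak{p}$ to kill the odd-order terms after projecting to $\mathfrak{p}$. The bookkeeping points you flag (the identification $\pi_{\ast e}\simeq\mathrm{pr}_{\mathfrak p}$ and the distinction between $L_g$ and $\tau(g)$) are exactly the ones that matter, and you have handled them correctly.
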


\subsection{Rank of symmetric Lie algebras}

In \cite{LeMc76}, Lepowsky and McCollum defined the rank of any symmetric Lie algebra even when it is not necessarily semisimple. This requires introducing Cartan subspaces in a very general way.

\begin{definition}
Let $(\mathfrak{g}, \sigma)$ be a symmetric Lie algebra and let $\mathfrak{g} = \mathfrak{k} \oplus \mathfrak{p}$ be the usual decomposition of $\mathfrak{g}$ into the $(\pm 1)$-eigenspaces of $\sigma$. A \emph{Cartan subspace} of the symmetric Lie algebra is a subspace $\mathfrak{a}$ of $\mathfrak{p}$ such that
$$\mathfrak{a} = \mathfrak{p}_\mathfrak{a}^0 := \lbrace X \in \mathfrak{p} \mid \forall Y \in \mathfrak{a}, \, \exists n \geq 0, \, \mathrm{ad} (Y)^n (X) = 0 \rbrace.$$
\end{definition}

\begin{definition}
Let $(\mathfrak{g}, \sigma)$ be a symmetric Lie algebra and $\mathfrak{a}$ a subspace of $\mathfrak{p}$. Then $\mathfrak{a}$ is said to be a $\mathfrak{p}$-\emph{subalgebra} if
$$\forall X \in \mathfrak{a}, \quad \mathrm{ad}^2 (X) \mathfrak{a} \subset \mathfrak{a}.$$
Furthermore, it is said to be \emph{natural} if there exists $X_0 \in \mathfrak{a}$ such that $\mathrm{ad}^2 (X_0)$ induces a nonsingular endomorphism of $\mathfrak{p} / \mathfrak{a}$.
\end{definition}

\begin{proposition}[Lepowsky - McCollum, 1976 \cite{LeMc76}]
Let $(\mathfrak{g}, \sigma)$ be a symmetric Lie algebra and $\mathfrak{a}$ be a subspace of $\mathfrak{p}$. Then $\mathfrak{a}$ is a Cartan subspace if and only if $\mathfrak{a}$ is a minimal natural $\mathfrak{p}$-subalgebra of $\mathfrak{p}$.
\end{proposition}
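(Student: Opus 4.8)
The plan is to translate the statement into the Fitting/Engel-subalgebra formalism and then transport the classical theorem that minimal Engel subalgebras are exactly the Cartan subalgebras to the present $\mathbb{Z}/2$-graded situation. First I would introduce, for $X \in \mathfrak{p}$, the endomorphism $D_X := \mathrm{ad}(X)^2\vert_{\mathfrak{p}} \in \mathrm{End}(\mathfrak{p})$; since even powers of $\mathrm{ad}(X)$ preserve $\mathfrak{p}$ and $\mathrm{ad}(X)^{2m}\vert_{\mathfrak{p}} = D_X^m$, the condition $\mathrm{ad}(Y)^n(X)=0$ for some $n$ is equivalent to $D_Y^m(X)=0$ for some $m$. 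Writing $V_0(T) = \bigcup_n \ker T^n$ for the Fitting null-component of $T \in \mathrm{End}(\mathfrak{p})$, this identifies $\mathfrak{p}^0_{\mathfrak{a}} = \bigcap_{Y \in \mathfrak{a}} V_0(D_Y)$ and, for a single element, $\mathfrak{p}^0_X := V_0(D_X)$. In this language the $\mathfrak{p}$-subalgebra condition reads $D_X \mathfrak{a} \subset \mathfrak{a}$ for all $X \in \mathfrak{a}$, and naturalness says that $D_{X_0}$ is invertible on $\mathfrak{p}/\mathfrak{a}$ for some $X_0 \in \mathfrak{a}$, equivalently $V_0(D_{X_0}) \subset \mathfrak{a}$.

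The first ingredient is an Engel-type lemma. For $X \in \mathfrak{p}$ one has $\sigma \circ \mathrm{ad}(X) = -\mathrm{ad}(X) \circ \sigma$, so the full Engel subalgebra $\mathfrak{g}^0_X := \{Z \in \mathfrak{g} \mid \mathrm{ad}(X)^n Z = 0,\ \exists n\}$ is $\sigma$-stable; being a genuine Lie subalgebra of $\mathfrak{g}$ (the usual consequence of the generalized-eigenspace relations $[\mathfrak{g}^{\alpha},\mathfrak{g}^{\beta}]\subset\mathfrak{g}^{\alpha+\beta}$), it splits as $(\mathfrak{g}^0_X \cap \mathfrak{k}) \oplus (\mathfrak{g}^0_X \cap \mathfrak{p})$ with $\mathfrak{g}^0_X \cap \mathfrak{p} = V_0(D_X)$. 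The relations $[\mathfrak{p},\mathfrak{p}]\subset\mathfrak{k}$ and $[\mathfrak{k},\mathfrak{p}]\subset\mathfrak{p}$ then give $D_Z V_0(D_X) \subset V_0(D_X)$ for $Z \in V_0(D_X)$, so each $V_0(D_X)$ is a $\mathfrak{p}$-subalgebra; it is natural with witness $X$, since $V_0(D_X)$ is by definition the Fitting null of $D_X$ while $D_X$ is invertible on the complementary Fitting piece, which maps isomorphically onto $\mathfrak{p}/V_0(D_X)$.

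The heart of the matter, and the step I expect to be hardest, is the regular-element perturbation lemma. Keeping $\mathrm{ad}$ linear in its argument, I would work along lines $X_0 + tY$ with $Y \in \mathfrak{h} := \mathfrak{g}^0_{X_0}$, so that $\mathrm{ad}(Y)$ stabilizes $\mathfrak{h}$ and descends to $\mathfrak{g}/\mathfrak{h}$. The two classical facts to reprove in the $\sigma$-equivariant, real setting are: (i) since $\mathrm{ad}(X_0)$ is invertible on $\mathfrak{g}/\mathfrak{h}$, the determinant of $\mathrm{ad}(X_0+tY)$ on $\mathfrak{g}/\mathfrak{h}$ is a polynomial in $t$ nonzero at $t=0$, hence nonzero outside a finite set, giving $\mathfrak{g}^0_{X_0+tY} \subset \mathfrak{h}$ and therefore $V_0(D_{X_0+tY}) \subset V_0(D_{X_0})$ generically; and (ii) if $\mathrm{ad}(Y)\vert_{\mathfrak{h}}$ is not nilpotent, then the multiplicity of the zero root of the characteristic polynomial of $\mathrm{ad}(X_0+tY)\vert_{\mathfrak{h}}$ drops for generic $t$, so $\dim \mathfrak{g}^0_{X_0+tY} < \dim \mathfrak{h}$. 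The delicate points are the genericity statements over $\R$ (the minimizing locus is the complement of a proper real-algebraic subset of the affine space $\mathfrak{a}$, hence dense) and the compatibility with $\sigma$ ensuring that the perturbed Fitting nulls again meet $\mathfrak{p}$ in natural $\mathfrak{p}$-subalgebras; this is the graded analogue of the fact that a Cartan subalgebra is self-normalizing and nilpotent.

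With these two ingredients the equivalence becomes formal. For the forward direction, assume $\mathfrak{a} = \mathfrak{p}^0_{\mathfrak{a}} = \bigcap_{Y\in\mathfrak{a}} V_0(D_Y)$; the Engel-type lemma makes each $V_0(D_Y)$ a $\mathfrak{p}$-subalgebra, and taking the intersection yields $D_Z \mathfrak{a} \subset \mathfrak{a}$ for $Z \in \mathfrak{a}$, so $\mathfrak{a}$ is a $\mathfrak{p}$-subalgebra. Choosing $X_0 \in \mathfrak{a}$ minimizing $\dim V_0(D_X)$, the hypothesis gives $\mathfrak{a} \subset V_0(D_{X_0})=\mathfrak{h}$, so every perturbation direction $Y \in \mathfrak{a}$ lies in $\mathfrak{h}$; the perturbation lemma then forces $V_0(D_{X_0}) \subset V_0(D_Y)$ for all $Y \in \mathfrak{a}$ (otherwise the Fitting-null dimension would drop below the minimum), whence $V_0(D_{X_0}) = \mathfrak{a}$ and $\mathfrak{a}$ is natural, while minimality follows because any natural $\mathfrak{p}$-subalgebra $\mathfrak{b} \subset \mathfrak{a}$ has a witness $X_1 \in \mathfrak{b} \subset \mathfrak{a} \subset V_0(D_{X_1}) \subset \mathfrak{b}$, forcing $\mathfrak{b} = \mathfrak{a}$. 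For the converse, assume $\mathfrak{a}$ is a minimal natural $\mathfrak{p}$-subalgebra; a naturalness witness gives $\mathfrak{p}^0_{\mathfrak{a}} \subset V_0(D_{X_0}) \subset \mathfrak{a}$, and since $V_0(D_{X_0})$ is a natural $\mathfrak{p}$-subalgebra inside $\mathfrak{a}$, minimality yields $\mathfrak{a} = V_0(D_{X_0})=\mathfrak{h}\cap\mathfrak{p}$; then the perturbation lemma applied to directions $Y \in \mathfrak{a} \subset \mathfrak{h}$ shows each $\mathrm{ad}(Y)\vert_{\mathfrak{h}}$ is nilpotent (else $V_0(D_{X_0+tY}) \subsetneq \mathfrak{a}$ would be a strictly smaller natural $\mathfrak{p}$-subalgebra, contradicting minimality), that is $\mathfrak{a} \subset V_0(D_Y)$ for all $Y \in \mathfrak{a}$, giving $\mathfrak{a} \subset \mathfrak{p}^0_{\mathfrak{a}}$ and hence $\mathfrak{a} = \mathfrak{p}^0_{\mathfrak{a}}$.
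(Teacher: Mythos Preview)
The paper does not give its own proof of this proposition; it is quoted from Lepowsky--McCollum \cite{LeMc76} and used as a black box. Your plan is in fact a faithful reconstruction of the Lepowsky--McCollum argument: one transports Bourbaki's characterization of Cartan subalgebras as minimal Engel subalgebras to the $\mathbb{Z}/2$-graded setting by working with $D_X=\mathrm{ad}(X)^2\vert_{\mathfrak p}$, observing that the full Engel subalgebra $\mathfrak g^0_X$ is $\sigma$-stable and is a genuine subalgebra, and then running the standard regular-element perturbation. So strategically there is nothing to compare: your route is the route of the cited source.

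There is one technical point in your sketch that deserves tightening. Your perturbation lemma (ii) controls $\dim\mathfrak g^0_{X_0+tY}$, but in both directions you draw conclusions about the $\mathfrak p$-part $V_0(D_{X_0+tY})=\mathfrak g^0_{X_0+tY}\cap\mathfrak p$; a strict drop in $\dim\mathfrak g^0$ does not \emph{a priori} force a strict drop in the $\mathfrak p$-part. The cleanest fix is to note that for $Y\in\mathfrak p$ and any $\sigma$-stable subspace $\mathfrak h$ preserved by $\mathrm{ad}(Y)$, the two maps $\mathrm{ad}(Y)^2\vert_{\mathfrak h\cap\mathfrak k}$ and $D_Y\vert_{\mathfrak h\cap\mathfrak p}$ are compositions $A\circ B$ and $B\circ A$ of $\mathrm{ad}(Y)\vert_{\mathfrak h\cap\mathfrak k}$ and $\mathrm{ad}(Y)\vert_{\mathfrak h\cap\mathfrak p}$ in opposite orders, hence have the same nonzero spectrum; in particular $\mathrm{ad}(Y)\vert_{\mathfrak h}$ is nilpotent if and only if $D_Y\vert_{\mathfrak h\cap\mathfrak p}$ is. With this in hand you can either (a) minimize $\dim\mathfrak g^0_X$ over $X\in\mathfrak a$ rather than $\dim V_0(D_X)$, so that lemma (ii) gives the contradiction directly, or (b) keep your minimization and iterate: if the $\mathfrak p$-part does not shrink, replace $X_0$ by $X_0+tY$ and repeat, obtaining a strictly decreasing chain of $\sigma$-stable Engel subalgebras all with the same $\mathfrak p$-part $\mathfrak a$, which terminates by finite dimension. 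Either way the gap closes and the rest of your argument goes through as written.
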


\begin{theorem}[Lepowsky - McCollum, 1976 \cite{LeMc76}]
In any symmetric Lie algebra, there exist Cartan subspaces and they all have the same dimension. We call \emph{rank} of the symmetric Lie algebra this common dimension.
\end{theorem}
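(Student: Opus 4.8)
The statement has two parts: existence of Cartan subspaces and the equality of their dimensions. The plan is to deduce both from the characterization in the preceding Proposition, namely that Cartan subspaces are exactly the minimal natural $\mathfrak{p}$-subalgebras, combined with a semicontinuity analysis of the endomorphisms $\mathrm{ad}^2(X)|_\mathfrak{p}$. For existence, I would first observe that $\mathfrak{p}$ is itself a natural $\mathfrak{p}$-subalgebra: the inclusion $\mathrm{ad}^2(X)\mathfrak{p} \subset \mathfrak{p}$ holds for every $X$ because $\mathrm{ad}(X)$ interchanges $\mathfrak{k}$ and $\mathfrak{p}$, and naturality is vacuous since $\mathrm{ad}^2(X)$ acts on the zero space $\mathfrak{p}/\mathfrak{p}$. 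Hence the family of natural $\mathfrak{p}$-subalgebras is nonempty, and as $\mathfrak{p}$ is finite-dimensional I may choose one, say $\mathfrak{a}$, of least dimension. Any natural $\mathfrak{p}$-subalgebra strictly contained in $\mathfrak{a}$ would have strictly smaller dimension, so $\mathfrak{a}$ is minimal for inclusion; by the Proposition it is a Cartan subspace, settling existence.

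For the equidimensionality, I would introduce the integer $\ell := \min_{X \in \mathfrak{p}} \dim \mathfrak{p}^0_X$, where $\mathfrak{p}^0_X := \lbrace Y \in \mathfrak{p} \mid \mathrm{ad}(X)^n Y = 0 \text{ for some } n \rbrace$ is the generalized null-space (Fitting null-component) of $\mathrm{ad}^2(X)|_\mathfrak{p}$. Since the entries of $(\mathrm{ad}^2 X)^{\dim\mathfrak{p}}$ are polynomial in $X$ and $\dim \mathfrak{p}^0_X = \dim\mathfrak{p} - \mathrm{rank}\,(\mathrm{ad}^2 X)^{\dim\mathfrak{p}}$, the function $X \mapsto \dim\mathfrak{p}^0_X$ is upper semicontinuous, so the \emph{regular set} $\mathfrak{p}_{\mathrm{reg}} := \lbrace X : \dim\mathfrak{p}^0_X = \ell \rbrace$ is a nonempty Zariski-open subset of $\mathfrak{p}$. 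The first key lemma I would prove is that every Cartan subspace $\mathfrak{a}$ coincides with $\mathfrak{p}^0_{X_0}$ for its naturality element $X_0 \in \mathfrak{a}$: the inclusion $\mathfrak{a} \subset \mathfrak{p}^0_{X_0}$ follows from $\mathfrak{a} = \mathfrak{p}^0_\mathfrak{a}$ applied with $Y = X_0$, while $\mathfrak{p}^0_{X_0} \subset \mathfrak{a}$ follows because $\mathfrak{a}$ is $\mathrm{ad}^2(X_0)$-invariant and $\mathrm{ad}^2(X_0)$ is nonsingular on $\mathfrak{p}/\mathfrak{a}$, so the generalized null-space injects into that of the quotient, which is zero. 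In particular $\dim\mathfrak{a} \geq \ell$ for every Cartan subspace.

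Next I would show that for $X \in \mathfrak{p}_{\mathrm{reg}}$ the subspace $\mathfrak{p}^0_X$ is a Cartan subspace, which exhibits one of dimension exactly $\ell$: naturality holds with $X$ itself, since $\mathrm{ad}^2(X)$ is invertible on the Fitting complement, and regularity of $X$ forces the $\mathfrak{p}$-subalgebra and minimality conditions. It then remains to prove $\dim\mathfrak{a} \leq \ell$ for an \emph{arbitrary} Cartan subspace $\mathfrak{a}$. By the reduction lemma this is equivalent to showing that some element of $\mathfrak{a}$ is regular in the whole of $\mathfrak{p}$; indeed, if $\mathfrak{a}$ contains a point $X \in \mathfrak{p}_{\mathrm{reg}}$, then $\mathfrak{a} \subset \mathfrak{p}^0_X$ together with $\dim\mathfrak{p}^0_X = \ell$ gives $\dim\mathfrak{a} \leq \ell$, whence $\dim\mathfrak{a} = \ell$.

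The hard part will be exactly this last point: showing that every Cartan subspace meets the global regular locus, equivalently that $\min_{X\in\mathfrak{a}}\dim\mathfrak{p}^0_X = \ell$. The set $U_\mathfrak{a} := \lbrace X\in\mathfrak{a} : \mathfrak{p}^0_X = \mathfrak{a} \rbrace$ is nonempty and Zariski-open in $\mathfrak{a}$, but a priori it could lie entirely in the non-regular locus $\mathfrak{p}\setminus\mathfrak{p}_{\mathrm{reg}}$, so semicontinuity alone is insufficient. To overcome this I would fix a generic $X_0 \in U_\mathfrak{a}$, use the Fitting decomposition $\mathfrak{p} = \mathfrak{a} \oplus \mathfrak{q}$ it induces, and analyze the nullity of $\mathrm{ad}^2(X_0 + tY)$ for $Y$ ranging over $\mathfrak{a}$, exploiting that $\mathfrak{a}$ is a $\mathfrak{p}$-subalgebra on which each $\mathrm{ad}^2(Y)$ is nilpotent (because $\mathfrak{a} = \mathfrak{p}^0_\mathfrak{a}$) while $\mathrm{ad}^2(X_0)$ remains invertible on $\mathfrak{q}$ for small $t$. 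The aim is to show that the null-component can only shrink, never grow, along such deformations, so that the minimum $\ell$ is already attained inside $\mathfrak{a}$; the minimality of $\mathfrak{a}$ as a natural $\mathfrak{p}$-subalgebra is what prevents it from being trapped in the non-regular locus. This deformation-and-nullity analysis is where the genuine work lies, the remaining steps being formal consequences of the Proposition and of elementary semicontinuity.
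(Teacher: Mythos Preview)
The paper does not prove this theorem: it is stated with attribution to Lepowsky--McCollum \cite{LeMc76} and no argument is given. So there is nothing to compare your proposal against; the paper simply imports the result as background for the rank computation that follows.

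That said, your sketch has a genuine gap precisely at the step you flag as hard. Your deformation $X_0 + tY$ with $Y\in\mathfrak{a}$ stays inside $\mathfrak{a}$, and for \emph{every} $X\in\mathfrak{a}$ one has $\mathfrak{a}\subset\mathfrak{p}^0_X$ (since $\mathfrak{a}=\mathfrak{p}^0_\mathfrak{a}$ forces $\mathrm{ad}(X)$ to be nilpotent on $\mathfrak{a}$). Hence $\dim\mathfrak{p}^0_X\geq\dim\mathfrak{a}$ for all $X\in\mathfrak{a}$, with equality on your open set $U_\mathfrak{a}$; the minimum of $\dim\mathfrak{p}^0_X$ over $\mathfrak{a}$ is therefore exactly $\dim\mathfrak{a}$, and no deformation within $\mathfrak{a}$ can ever produce an element of nullity $\ell$ unless $\dim\mathfrak{a}=\ell$ already. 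So the argument as outlined is circular: it assumes the conclusion it is meant to establish.

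What Lepowsky and McCollum actually do to finish (and what you would need) is not a local deformation but a global argument: one shows that a generic element of $\mathfrak{p}$ lies in \emph{some} Cartan subspace, and then that any two Cartan subspaces are related by an automorphism (built, roughly, from exponentials of nilpotent pieces of $\mathrm{ad}^2$), which transports one onto the other and forces equal dimension. Alternatively one can pass to the complexification and invoke the classical conjugacy of Cartan subalgebras there. Either way, the missing ingredient is a conjugacy or transitivity statement, not a semicontinuity estimate; your reduction to ``every Cartan subspace meets $\mathfrak{p}_{\mathrm{reg}}$'' is correct, but the mechanism you propose for proving it cannot work.
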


\subsection{Pseudo-Riemannian symmetric spaces}

We need to add a compatible metric on the symmetric spaces in order to define generalized spheres on them.

\begin{definition}
A \emph{pseudo-Riemannian symmetric space} is a symmetric space $(M, s)$ endowed with a pseudo-Riemannian metric $\hat{g}$ such that, for all $x \in M$, $s_x$ is an isometry of $(M, \hat{g})$.
\end{definition}

On a pseudo-Riemannian symmetric space, the unique affine connection such that every symmetry is an affine transformation is the Levi-Civita connection associated to the metric.

\begin{definition}\label{deftriple}
A \emph{pseudo-Riemannian symmetric Lie algebra} is a symmetric Lie algebra $(\mathfrak{g}, \sigma)$ endowed with a non-degenerate symmetric bilinear form $B$ on $\mathfrak{g}$ such that
\begin{enumerate}
\item $\forall X, Y \in \mathfrak{g}, \quad B (\sigma X, \sigma Y) = B (X, Y)$,
\item $\forall X, Y, Z \in \mathfrak{g}, \quad B (\mathrm{ad} (Z) X, Y) + B (X, \mathrm{ad} (Z) Y) = 0$.
\end{enumerate}
The \emph{signature} of the triple $(\mathfrak{g}, \sigma, B)$ is the signature of $B \vert_{\mathfrak{p} \times \mathfrak{p}}$ and its \emph{dimension} is the dimension of $\mathfrak{p}$.
\end{definition}

Note that the transvection pseudo-Riemannian symmetric Lie algebras are automatically effective.

\begin{definition}
An isomorphism of pseudo-Riemannian symmetric Lie algebras $(\mathfrak{g}_1, \sigma_1, B_1)$ and $(\mathfrak{g}_2, \sigma_2, B_2)$ is a Lie algebra isomorphism $\alpha : \mathfrak{g}_1 \rightarrow \mathfrak{g}_2$ such that $\alpha \circ \sigma_1 = \sigma_2 \circ \alpha$ and $\alpha^\ast B_2 = B_1$.
\end{definition}

\begin{theorem}
There is one-to-one correspondence between isometry classes of connected, simply connected pseudo-Riemannian symmetric spaces and isomorphism classes of transvection pseudo-Riemannian symmetric Lie algebras.
\end{theorem}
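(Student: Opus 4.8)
The plan is to reduce to the purely affine (metric-free) correspondence already recorded in the excerpt --- namely that every effective transvection symmetric Lie algebra integrates to a unique connected, simply connected symmetric space, and conversely --- and then to show that the extra datum of a compatible pseudo-Riemannian metric $\hat g$ corresponds exactly to the extra datum of a non-degenerate invariant form $B$ satisfying conditions (1)--(2) of Definition \ref{deftriple}. Since transvection pseudo-Riemannian symmetric Lie algebras are automatically effective, as noted after Definition \ref{deftriple}, both sides are governed by the same underlying symmetric Lie algebra, and only the metric layer has to be matched.

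From a space to a triple: fix a base-point $x_0$, let $(\mathfrak g,\sigma)$ be its effective transvection symmetric Lie algebra and $\mathfrak g = \mathfrak k \oplus \mathfrak p$ the eigenspace decomposition. I transport $\hat g_{x_0}$ to $\mathfrak p$ through the isomorphism $\phi$ to obtain a non-degenerate symmetric form $B_{\mathfrak p}$ on $\mathfrak p$; because each symmetry is an isometry, $B_{\mathfrak p}$ is $\mathrm{ad}(\mathfrak k)$-invariant. I then set $B$ equal to $B_{\mathfrak p}$ on $\mathfrak p$, declare $\mathfrak k \perp \mathfrak p$ (forced by condition (1), since $\sigma$ acts by $+1$ on $\mathfrak k$ and $-1$ on $\mathfrak p$), and define $B$ on $\mathfrak k = [\mathfrak p,\mathfrak p]$ by the relation $B(T,[X,Y]) := B_{\mathfrak p}([T,X],Y)$ dictated by condition (2). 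The routine part is checking that this is well defined, symmetric, and $\sigma$-invariant, using the Jacobi identity together with the $\mathrm{ad}(\mathfrak k)$-invariance of $B_{\mathfrak p}$; condition (2) then propagates to all of $\mathfrak g$ by bilinearity.

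From a triple to a space: I take the connected, simply connected symmetric space $(M,s)$ attached to $(\mathfrak g,\sigma)$ by the metric-free correspondence, realize it as $G/K$ with $T_{x_0}M \cong \mathfrak p$, and equip it with the $G$-invariant pseudo-Riemannian metric whose value at $x_0$ is $B|_{\mathfrak p \times \mathfrak p}$. Non-degeneracy of $B$ together with $\mathfrak k \perp \mathfrak p$ (from (1)) makes this restriction non-degenerate, and (2) supplies the $\mathrm{ad}(\mathfrak k)$-invariance needed for the invariant extension to be consistent; since $K$ is connected this is enough. Each symmetry is then an isometry because $\sigma|_{\mathfrak p} = -\mathrm{id}$ preserves the symmetric form and $G$ acts by isometries. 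Finally I verify that the two constructions are mutually inverse up to the appropriate equivalence and that an isometry fixing base-points induces, by conjugation of symmetries, a Lie algebra isomorphism $\alpha$ with $\alpha\circ\sigma_1 = \sigma_2\circ\alpha$ and $\alpha^\ast B_2 = B_1$, while conversely such an $\alpha$ integrates to an isometry; bijectivity on equivalence classes follows.

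The main obstacle is twofold. First, the non-degeneracy of the extended form on $\mathfrak k$: one shows that if $B(T,[X,Y])=0$ for all $X,Y\in\mathfrak p$ then $B_{\mathfrak p}([T,X],Y)=0$ for all such $X,Y$, whence $\mathrm{ad}(T)|_{\mathfrak p}=0$ by non-degeneracy of $B_{\mathfrak p}$, and then $T=0$ by the injectivity characterization of effectiveness --- so effectiveness is exactly what rescues non-degeneracy of $B$ on $\mathfrak g$. Second, the rigidity making the correspondence injective on isometry classes: two connected, simply connected pseudo-Riemannian symmetric spaces with isomorphic triples must be \emph{globally} isometric, and an isomorphism of triples must integrate to a global isometry rather than a merely local one. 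This is where simple connectedness is indispensable, both to integrate $\alpha$ to a group isomorphism $G_1\to G_2$ descending to $M_1\to M_2$ and to guarantee that the transvection group is not enlarged, so that the triple recovered from the constructed space is the one we started with.
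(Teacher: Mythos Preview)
The paper does not supply a proof of this theorem: it is stated as a background classification result (in the spirit of Loos and of Cahen--Parker) and immediately followed by the definitions of decomposability, so there is nothing to compare your argument against line by line.

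That said, your sketch is the standard one and is essentially correct. A couple of points deserve a sharper formulation than ``routine'' or ``by bilinearity''. First, well-definedness of the extension $B(T,[X,Y]) := B_{\mathfrak p}([T,X],Y)$ on $\mathfrak k = [\mathfrak p,\mathfrak p]$ is not automatic from $\mathrm{ad}(\mathfrak k)$-invariance of $B_{\mathfrak p}$ alone: one needs the pair symmetry $B_{\mathfrak p}([[X,Y],Z],W) = B_{\mathfrak p}([[Z,W],X],Y)$, which is the curvature identity $R(X,Y,Z,W)=R(Z,W,X,Y)$ and follows from Jacobi together with the skew-symmetry in the last two slots. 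Once you have that, both well-definedness and symmetry of $B\vert_{\mathfrak k\times\mathfrak k}$ drop out simultaneously. Second, you should note explicitly that conditions (1)--(2) force $B$ on $\mathfrak k$ to be given by exactly your formula, so the triple recovered from $(M,s,\hat g)$ is uniquely determined (not merely that \emph{some} compatible $B$ exists); this is what makes the map from spaces to triples well defined on isometry classes and is needed for injectivity. Finally, ``condition (2) then propagates to all of $\mathfrak g$ by bilinearity'' understates the work: the remaining case $Z,X,Y\in\mathfrak k$ requires another pass through Jacobi and the definition, not just linearity. With these clarifications your outline goes through.
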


\begin{definition}
A connected pseudo-Riemannian manifold $(M, \hat{g})$ is said to be \emph{decomposable} if there exists a proper subspace $V$ of $T_{x_0} M$ which is invariant under the holonomy group $\mathrm{Hol} (M, x_0)$ at $x_0$ and such that $\hat{g}_{x_0} \vert_{V \times V}$ is non-degenerate. Otherwise $(M, \hat{g})$ is said to be \emph{indecomposable}.
\end{definition}

\begin{definition}
A pseudo-Riemannian symmetric Lie algebra $(\mathfrak{g}, \sigma, B)$ is said to be \emph{decomposable} if, when denoting by $\mathfrak{g} = \mathfrak{k} \oplus \mathfrak{p}$ the usual decomposition of $\mathfrak{g}$ with respect to $\sigma$, there exists a proper subspace $\mathfrak{q}$ of $\mathfrak{p}$ which is invariant under $\mathrm{ad} (\mathfrak{k})$ and such that $B \vert_{\mathfrak{q} \times \mathfrak{q}}$ is non-degenerate. Otherwise $(\mathfrak{g}, \sigma, B)$ is said to be \emph{indecomposable}.
\end{definition}

The de Rham-Wu theorem \cite{Wu64} asserts that any decomposable connected pseudo-Riemannian manifold which is simply connected and complete is isometric to a product of indecomposable ones. In addition, a pseudo-Riemannian product manifold is a symmetric space if and only if every factor of the product is a symmetric space. Finally, a connected, simply connected pseudo-Riemnnian symmetric space $(M, s, \hat{g})$ is indecomposable if and only if its associated transvection pseudo-Riemannian symmetric Lie algebra $(\mathfrak{g}, \sigma, B)$ is indecomposable.

\subsection{Classification of indecomposable Lorentzian symmetric spaces}

Indecomposable Lorentzian symmetric spaces fall into two categories : those whose transvection Lie algebra is semisimple and those whose transvection Lie algebra is solvable.

\begin{proposition}[M. Cahen and N. Wallach, 1970 \cite{CaWa70}]\label{theoremindecomplorentz}
Let $(\mathfrak{g}, \sigma, B)$ be an indecomposable  transvection Lorentzian symmetric Lie algebra. Then $\mathfrak{g}$ is either semisimple or solvable.
\end{proposition}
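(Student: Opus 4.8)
The plan is to pin down the solvable radical $\mathfrak{r}$ of $\mathfrak{g}$ and show that indecomposability, together with the hypotheses on $B$, forces $\mathfrak{r}=0$ (whence $\mathfrak{g}$ is semisimple) or $\mathfrak{r}=\mathfrak{g}$ (whence $\mathfrak{g}$ is solvable). I would begin by recording the elementary consequences of the axioms: from $B(\sigma X,\sigma Y)=B(X,Y)$ the eigenspaces $\mathfrak{k}$ and $\mathfrak{p}$ of $\sigma$ are $B$-orthogonal and $B$ restricts non-degenerately to each; by hypothesis $B\vert_{\mathfrak{p}\times\mathfrak{p}}$ has Lorentzian signature, so $\mathfrak{p}$ contains no totally isotropic subspace of dimension $\geq 2$; and since the symmetric Lie algebra is of transvection type it is effective, i.e. $\mathfrak{k}$ contains no nonzero ideal of $\mathfrak{g}$, while $\mathfrak{k}=[\mathfrak{p},\mathfrak{p}]$.

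The workhorse is the following observation about a $\sigma$-stable ideal $\mathfrak{i}$ of $\mathfrak{g}$: the subspace $\mathfrak{i}\cap\mathfrak{p}$ is invariant under $\mathrm{ad}(\mathfrak{k})$ (because $[\mathfrak{k},\mathfrak{i}]\subseteq\mathfrak{i}$ and $[\mathfrak{k},\mathfrak{p}]\subseteq\mathfrak{p}$), and since $B$ is $\mathrm{ad}(\mathfrak{k})$-invariant, its $B$-radical $(\mathfrak{i}\cap\mathfrak{p})\cap(\mathfrak{i}\cap\mathfrak{p})^{\perp}$ is a totally isotropic $\mathrm{ad}(\mathfrak{k})$-submodule of the Lorentzian space $\mathfrak{p}$, hence has dimension $0$ or $1$. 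I apply this to $\mathfrak{i}=\mathfrak{r}$, which is $\sigma$-stable because the solvable radical is a characteristic ideal. If $B$ is non-degenerate on $\mathfrak{r}\cap\mathfrak{p}$, then $\mathfrak{r}\cap\mathfrak{p}$ is an $\mathrm{ad}(\mathfrak{k})$-invariant non-degenerate subspace of $\mathfrak{p}$, so indecomposability forces $\mathfrak{r}\cap\mathfrak{p}\in\{0,\mathfrak{p}\}$: if it equals $\mathfrak{p}$ then $\mathfrak{k}=[\mathfrak{p},\mathfrak{p}]\subseteq[\mathfrak{r},\mathfrak{r}]\subseteq\mathfrak{r}$, so $\mathfrak{g}=\mathfrak{r}$ is solvable; if it equals $0$ then $\mathfrak{r}=\mathfrak{r}\cap\mathfrak{k}$ is an ideal of $\mathfrak{g}$ contained in $\mathfrak{k}$, so effectiveness gives $\mathfrak{r}=0$ and $\mathfrak{g}$ is semisimple.

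The remaining — and genuinely hard — case is that $B$ is degenerate on $\mathfrak{r}\cap\mathfrak{p}$, i.e. its $B$-radical is a null line $\mathbb{R}v$ preserved by $\mathrm{ad}(\mathfrak{k})$. Indecomposability is not violated here (this is precisely the regime occupied by the Cahen--Wallach model spaces), so I must show $\mathfrak{r}=\mathfrak{g}$ directly. The structural input is that, $\mathrm{ad}(\mathfrak{k})$ acting $B$-skew-symmetrically and preserving the null line $\mathbb{R}v$, it also preserves $v^{\perp}$ and acts on the positive-definite quotient $v^{\perp}/\mathbb{R}v$; hence the entire isotropy $\mathrm{ad}(\mathfrak{k})\vert_{\mathfrak{p}}$ lies in the stabilizer of the null flag $0\subsetneq\mathbb{R}v\subsetneq v^{\perp}\subsetneq\mathfrak{p}$, a parabolic subalgebra of $\mathfrak{so}(\mathfrak{p},B\vert_{\mathfrak{p}})$ whose reductive part is $\mathfrak{so}(m-2)\oplus\mathbb{R}$. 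Choosing a $\sigma$-stable Levi decomposition $\mathfrak{g}=\mathfrak{s}\oplus\mathfrak{r}$ (available because $\langle\sigma\rangle$ is finite), the semisimple part $\mathfrak{s}$ preserves $\mathfrak{r}$ and the restriction $B\vert_{\mathfrak{r}}$; chasing its action through this null flag — together with the action of $\mathfrak{s}\cap\mathfrak{p}\subseteq\mathfrak{p}$ by boosts and the non-degeneracy of $B$ on $\mathfrak{s}$ — one must extract $\mathfrak{s}=0$, i.e. $\mathfrak{g}=\mathfrak{r}$. I expect this implication — that a nonzero semisimple Levi factor is incompatible with an isotropy squeezed into the stabilizer of a null line of the Lorentzian module $\mathfrak{p}$ — to be the main obstacle; it is where the Lorentzian signature, the invariance and non-degeneracy of $B$, and $\mathfrak{k}=[\mathfrak{p},\mathfrak{p}]$ all have to be used at once, and it is handled by a careful analysis of the nilradical and of how $\mathrm{ad}(\mathfrak{p})$ shifts the flag.

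Finally, for robustness I would cross-check with the equivalent formulation through the isotropy (holonomy) representation $\mathrm{ad}(\mathfrak{k})\vert_{\mathfrak{p}}\subseteq\mathfrak{so}(\mathfrak{p},B\vert_{\mathfrak{p}})$, which indecomposability makes weakly irreducible (no non-degenerate proper invariant subspace). If this representation is irreducible, Schur's lemma forces the (isotropy-invariant, symmetric) Ricci form of the symmetric space to be a real multiple of $B\vert_{\mathfrak{p}}$; a nonzero multiple makes the Killing form of $\mathfrak{g}$ non-degenerate on $\mathfrak{p}$ and, via $\mathfrak{k}=[\mathfrak{p},\mathfrak{p}]$ and effectiveness, on all of $\mathfrak{g}$, so $\mathfrak{g}$ is semisimple, whereas the multiple $0$ would make $(\mathfrak{g},\sigma,B)$ a flat symmetric Lie algebra, which in dimension $\geq 2$ is decomposable. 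If the representation is reducible but weakly irreducible, one is again in the null-flag situation above and concludes solvability. Either way the dichotomy ``semisimple or solvable'' emerges.
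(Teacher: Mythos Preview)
The paper does not give its own proof of this proposition: it is stated with attribution to Cahen--Wallach \cite{CaWa70} and no argument follows. So there is nothing in the paper to compare your proposal against; I can only assess the proposal on its own terms.

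Your overall architecture is the right one and matches the standard route: pass to the $\sigma$-stable radical $\mathfrak{r}$, look at the $\mathrm{ad}(\mathfrak{k})$-invariant subspace $\mathfrak{r}\cap\mathfrak{p}$, and split according to whether $B$ is non-degenerate there. The non-degenerate branch is cleanly handled. The genuine gap is exactly where you flag it: in the degenerate (null-line) branch you assert that ``one must extract $\mathfrak{s}=0$'' from a $\sigma$-stable Levi decomposition, but you do not actually carry this out. Saying that the isotropy sits in the parabolic stabilizing the null flag is correct and useful, but the step from ``$\mathrm{ad}(\mathfrak{k})$ preserves a null line'' to ``the Levi factor $\mathfrak{s}$ vanishes'' is the whole content of the proposition in this regime, and your sketch (``chasing its action through this null flag \ldots\ one must extract $\mathfrak{s}=0$'') is not yet an argument. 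Concretely, you need to show that $\mathfrak{s}\cap\mathfrak{p}$, being $B$-orthogonal to the ideal $\mathfrak{r}$, lies in $v^{\perp}$; then use that $B$ is non-degenerate on $\mathfrak{s}$ (since $\mathfrak{s}^{\perp}\supseteq\mathfrak{r}$ and $B$ is non-degenerate on $\mathfrak{g}$) together with the Lorentzian signature to force $\mathfrak{s}\cap\mathfrak{p}$ to carry a definite form, which makes $\mathfrak{s}\cap\mathfrak{p}$ itself a non-degenerate $\mathrm{ad}(\mathfrak{k})$-invariant subspace of $\mathfrak{p}$ --- contradicting indecomposability unless $\mathfrak{s}\cap\mathfrak{p}=0$, whence $\mathfrak{s}\subseteq\mathfrak{k}$ and effectiveness gives $\mathfrak{s}=0$. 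Some version of this is what your ``careful analysis'' has to become.

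One caution on your cross-check: in the irreducible isotropy case you conclude from Ricci${}=0$ that the symmetric Lie algebra is \emph{flat}. Ricci-flat does not mean flat in general; for symmetric spaces you would need an extra argument (e.g.\ that an irreducible $\mathrm{ad}(\mathfrak{k})$-module in Lorentzian signature with vanishing Ricci forces $[\mathfrak{p},\mathfrak{p}]=0$). As written, that inference is not justified, so the alternative route also has a hole at precisely the delicate point.
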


In the semisimple case, the associated Lorentzian symmetric spaces are automatically of constant sectional curvature as ensures the following theorem whose proof is based on Berger's list \cite{Ber57}.

\begin{theorem}[M. Cahen, J. Leroy, M. Parker, F. Tricerri and L. Vanhecke, 1990 \cite{Cah90}]
Let $(\mathfrak{g}, \sigma, B)$ be a semisimple indecomposable Lorentzian symmetric Lie algebra of dimension $\geq 3$. Then the sectional curvature of the associated Lorentzian symmetric space is constant and non-zero.
\end{theorem}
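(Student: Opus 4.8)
The plan is to feed the semisimplicity supplied by Proposition~\ref{theoremindecomplorentz} into Berger's classification of semisimple symmetric Lie algebras and to isolate the Lorentzian entries, the point being that each of them is isotropic and an isotropic Lorentzian symmetric space has constant curvature. Throughout, $\mathfrak g = \mathfrak k \oplus \mathfrak p$ denotes the decomposition attached to $\sigma$, $m = \dim \mathfrak p \geq 3$, and $\hat g$ the metric induced by $B|_{\mathfrak p \times \mathfrak p}$, which is Lorentzian by hypothesis.

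First I would reduce to an \emph{irreducible} symmetric pair. Since $(\mathfrak g, \sigma)$ is a transvection symmetric Lie algebra we have $\mathfrak k = [\mathfrak p, \mathfrak p]$; if $\mathfrak g = \mathfrak g_1 \oplus \mathfrak g_2$ were a splitting into nonzero $\sigma$-stable ideals, then $\mathfrak p = \mathfrak p_1 \oplus \mathfrak p_2$ with each $\mathfrak p_i$ invariant under $\mathrm{ad}(\mathfrak k)$ (because $[\mathfrak k_i, \mathfrak p_j] = 0$ for $i \neq j$) and, $\mathfrak g$ being semisimple, $B$ non-degenerate on each $\mathfrak p_i$ --- contradicting indecomposability; hence $\mathfrak g$ has no proper $\sigma$-stable ideal. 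I would then promote ``indecomposable'' to genuine irreducibility of the isotropy representation $\mathrm{ad}(\mathfrak k)|_{\mathfrak p}$: a proper $\mathrm{ad}(\mathfrak k)$-invariant subspace $\mathfrak q$ on which $B$ is non-degenerate is excluded by hypothesis, and if $B|_{\mathfrak q}$ degenerates then $\mathfrak q \cap \mathfrak q^{\perp}$ (also $\mathrm{ad}(\mathfrak k)$-invariant, as $B$ is invariant) is totally isotropic, hence in Lorentzian signature an invariant null line --- the case that one must rule out using that $\mathfrak g$ is semisimple.

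Granting irreducibility of $\mathrm{ad}(\mathfrak k)|_{\mathfrak p}$ on the Lorentzian space $(\mathfrak p, \hat g)$, I would invoke the classification of irreducible subalgebras of $\mathfrak{so}(1, m-1)$ --- equivalently, read the Lorentzian entries off Berger's list~\cite{Ber57}, which for $m \geq 3$ are only the de Sitter and anti-de Sitter families $\big(\mathfrak{so}(1,m), \mathfrak{so}(1,m-1)\big)$ and $\big(\mathfrak{so}(2,m-1), \mathfrak{so}(1,m-1)\big)$, the three-dimensional group-type algebra $\mathfrak{sl}(2,\R) \oplus \mathfrak{sl}(2,\R)$ with the swap appearing among them as anti-de Sitter. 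In either formulation one gets $\mathrm{ad}(\mathfrak k)|_{\mathfrak p} = \mathfrak{so}(\mathfrak p, \hat g)$, so the isotropy group acts on $T_{x_0} M$ as the full pseudo-orthogonal group and $M$ is isotropic. The Riemann tensor of a symmetric space is $\mathrm{ad}(\mathfrak k)$-invariant, and the space of algebraic curvature tensors invariant under all of $\mathfrak{so}(1, m-1)$ is one-dimensional, spanned by $R_0(X,Y)Z = \hat g(Y,Z) X - \hat g(X,Z) Y$; hence the sectional curvature equals a constant $\kappa$, which is constant on $M$ since the curvature is parallel. Finally $\kappa \neq 0$: were $\kappa = 0$, the connected simply connected space would be flat, hence isometric to $\R^{1,m-1}$, which is decomposable --- contradicting the hypothesis.

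The main obstacle is the exclusion of an $\mathrm{ad}(\mathfrak k)$-invariant null line in $\mathfrak p$, i.e.\ the upgrade from indecomposable (no invariant \emph{non-degenerate} proper subspace) to irreducible (no invariant proper subspace at all); this is exactly where semisimplicity of $\mathfrak g$ is indispensable, and where I would lean on Berger's classification --- or on the fact that a proper connected subgroup of $\mathrm{SO}_0(1, n)$ never acts irreducibly on $\R^{1,n}$. Concretely, an invariant null line would confine $\mathrm{ad}(\mathfrak k)|_{\mathfrak p} \cong \mathfrak k$ to a parabolic subalgebra of $\mathfrak{so}(1, m-1)$, and one must show this is incompatible with $\mathfrak k$ being reductive in the semisimple $\mathfrak g$ carrying $B$ as a multiple of the Killing form on $\mathfrak p$ --- by Proposition~\ref{theoremindecomplorentz} such a configuration forces $\mathfrak g$ solvable. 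I would also verify separately that the group-type case $\mathfrak{sl}(2,\R) \oplus \mathfrak{sl}(2,\R)$ with the swap involution (the universal cover of $3$-dimensional anti-de Sitter space) and the behaviour at the low dimensions covered by the standing hypothesis $m \geq 3$ are consistent with the constant-curvature conclusion.
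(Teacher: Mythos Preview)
Your strategy is correct and coincides with the paper's at the structural level: show that the isotropy representation $\mathrm{ad}(\mathfrak k)|_{\mathfrak p}$ is irreducible on the Lorentzian space $(\mathfrak p,\hat g)$, deduce that it is all of $\mathfrak{so}(1,m-1)$, and conclude constant nonzero curvature. The difference is in the key input. You lean primarily on Berger's list~\cite{Ber57}, which is indeed how the original reference~\cite{Cah90} proceeds; the paper, by contrast, deliberately \emph{avoids} Berger and instead invokes the Di~Scala--Olmos theorem~\cite{ScaOlm01} that a connected Lie subgroup of $SO(1,m-1)$ acting irreducibly on $\R^{1,m-1}$ must be $SO_0(1,m-1)$ --- exactly the alternative you mention in passing. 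The Di~Scala--Olmos route is lighter (no case-by-case check of Berger's tables) and isolates precisely the Lorentzian phenomenon being used. On the other hand, you are more explicit than the paper about the passage from ``indecomposable'' to ``irreducible'': the paper simply asserts irreducibility, whereas you correctly flag the exclusion of an $\mathrm{ad}(\mathfrak k)$-invariant null line as the point where semisimplicity of $\mathfrak g$ enters. Your separate argument for $\kappa\neq 0$ via decomposability of the flat model is also a detail the paper leaves implicit.
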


It is possible to prove this result without using Berger's list. It involves a result of A. J. Di Scala and C. Olmos \cite{ScaOlm01} about connected Lie subgroups of the Lorentzian group acting irreducibly on the flat Lorentzian vector space.

\begin{proof}
Let $\mathfrak{g} = \mathfrak{k} \oplus \mathfrak{p}$ be the usual decomposition of $\mathfrak{g}$ with respect to $\sigma$. Then the representation
$$\mathrm{ad}_\mathfrak{g} (\cdot) \vert_\mathfrak{p} : \mathfrak{k} \rightarrow \mathfrak{gl} (\mathfrak{p})$$
is faithful and irreducible. If $\dim (\mathfrak{p}) = n$, the image of $\mathfrak{k}$ by $\mathrm{ad}$ is contained in $\mathfrak{o} (\mathfrak{p}, B \vert_\mathfrak{p}) \simeq \mathfrak{o} (1, n-1)$.
By integrating, we get a connected Lie subgroup $K$ of $SO (1, n-1)$ which acts irreducibly on $(\R^{1,n-1}, \mathbb{I}_{1, n-1})$. By theorem 1.1 in \cite{ScaOlm01}, $K = SO_0 (1, n-1)$. Therefore, the sectional curvature of the associated Lorentzian symmetric space is constant and non-zero.
\end{proof}

In dimension $2$, the Lorentzian symmetric spaces are automatically of constant sectional curvature. The classification in the semisimple case is given by the following theorem.

\begin{theorem}[S. Helgason, 1959 \cite{Hel59}]\label{theoremLorentz}
Any Lorentzian symmetric space with non-zero constant sectional curvature is locally isometric to one of the two following model spaces, up to a positive constant factor on the metric,
\begin{enumerate}
\item $Q_{(+1)} := \lbrace (x_1, ..., x_{n+1}) \in \R^{n+1} \mid x_1^2 - x_2^2 - ... - x_n^2 + x_{n+1}^2 = 1 \rbrace$,
\item $Q_{(-1)} := \lbrace (x_1, ..., x_{n+1}) \in \R^{n+1} \mid x_1^2 - x_2^2 - ... - x_n^2 - x_{n+1}^2 = -1 \rbrace$,
\end{enumerate}
endowed with the Lorentzian metric induced by the flat metric on $\R^{1, n}$.
\end{theorem}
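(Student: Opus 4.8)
The plan is to reduce the statement to a curvature computation on the two quadrics, together with Cartan's local‑isometry theorem (equivalently, the correspondence between simply connected symmetric spaces and transvection symmetric Lie algebras recalled above). Let $(M, s, \hat{g})$ be a Lorentzian symmetric space of constant sectional curvature $\kappa \neq 0$, so that at a base‑point $x_0$ one has $R_{x_0}(X,Y)Z = \kappa\,(\hat{g}_{x_0}(Y,Z)\,X - \hat{g}_{x_0}(X,Z)\,Y)$. Replacing $\hat{g}$ by $|\kappa|\,\hat{g}$ divides $\kappa$ by $|\kappa|$, hence normalises it to $\varepsilon := \mathrm{sign}(\kappa) \in \lbrace +1, -1 \rbrace$; since the sign of the constant curvature is an isometry invariant, there remain exactly two cases to treat, matching the two model spaces.

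First I would check that $Q_{(\varepsilon)}$ is a Lorentzian symmetric space of constant curvature $\varepsilon$. Equip its ambient $\R^{n+1}$ with the flat pseudo‑metric $\langle\cdot,\cdot\rangle$ given by the quadratic form appearing in the defining equation (of signature $(2, n-1)$ when $\varepsilon = +1$ and $(1, n)$ when $\varepsilon = -1$). For $p \in Q_{(\varepsilon)}$ one has $\langle p, p\rangle = \varepsilon$, so the position vector is a non‑degenerate normal, $Q_{(\varepsilon)}$ is a non‑degenerate hypersurface, and its induced metric on $T_p Q_{(\varepsilon)} = p^\perp$ is Lorentzian of dimension $n$. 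The linear maps $s_p(x) = 2\varepsilon\langle p, x\rangle\, p - x$ preserve $\langle\cdot,\cdot\rangle$ and $Q_{(\varepsilon)}$, restrict to isometries of $Q_{(\varepsilon)}$ fixing $p$ with differential $-\mathrm{Id}$ at $p$, and satisfy the three axioms of a symmetric space. Finally $Q_{(\varepsilon)}$ is totally umbilic with shape operator a non‑zero multiple of the identity, so the Gauss equation applied to the flat ambient space yields $R_{p}(X,Y)Z = \varepsilon\,(\hat{g}_p(Y,Z)\,X - \hat{g}_p(X,Z)\,Y)$.

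It then remains to see that any Lorentzian symmetric space of constant curvature $\varepsilon \in \lbrace \pm 1 \rbrace$ is locally isometric to $Q_{(\varepsilon)}$. Any linear isometry $T : T_{x_0} M \to T_{p_0} Q_{(\varepsilon)}$ automatically intertwines the two curvature tensors, both being the universal constant‑curvature tensor with the same constant $\varepsilon$; since $M$ and $Q_{(\varepsilon)}$ are symmetric their curvature tensors are parallel, so all covariant derivatives of $R$ are intertwined as well, and Cartan's theorem on local isometries provides a local isometry extending $T$. Alternatively one argues Lie‑algebraically: for a symmetric space the curvature $R_{x_0}$ determines the bracket $\mathfrak{p} \times \mathfrak{p} \to \mathfrak{k}$, hence — effectivity making the action of $\mathfrak{k}$ on $\mathfrak{p}$ faithful — the whole transvection pseudo‑Riemannian symmetric Lie algebra $(\mathfrak{g}, \sigma, B)$; for $\kappa = \varepsilon$ the operators $R_{x_0}(X,Y)$ span the orthogonal algebra $\mathfrak{o}(\mathfrak{p}, \hat{g}_{x_0}) \simeq \mathfrak{o}(1, n-1)$, so $\mathfrak{k} \simeq \mathfrak{o}(1, n-1)$ and $(\mathfrak{g}, \sigma, B)$ is forced to be isomorphic to the one attached to $Q_{(\varepsilon)}$, whence an isometry of the simply connected covers by the classification correspondence.

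The genuine content lies in this last step: passing from a coincidence of curvature at one point to a local isometry. Granting Cartan's theorem it is immediate; carrying out the Lie‑algebraic alternative instead requires checking carefully that the curvature operators of a constant‑curvature Lorentzian symmetric space exhaust $\mathfrak{o}(1, n-1)$ and tracking the signatures of the ambient bilinear forms so as to recognise $(\mathfrak{g}, \sigma, B)$ as the one of $Q_{(+1)}$ resp.\ $Q_{(-1)}$. By comparison, verifying that the two quadrics are Lorentzian symmetric spaces with the stated constant curvature is routine (umbilicity plus the Gauss equation).
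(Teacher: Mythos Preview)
Your proposal is correct and follows precisely the approach the paper attributes to Helgason: the paper does not spell out a proof but only remarks that it ``relies on the fact that a pseudo-Riemannian symmetric space is determined on a neighborhood of any point by its metric and curvature tensor at this point,'' which is exactly the Cartan local-isometry step (equivalently, the Lie-algebraic reconstruction from $R_{x_0}$) that you make explicit. Your additional verification that the quadrics $Q_{(\pm 1)}$ are Lorentzian symmetric spaces of constant curvature $\pm 1$ via umbilicity and the Gauss equation is routine and compatible with that outline.
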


The proof of this theorem, found in \cite{Hel59}, relies on the fact that a pseudo-Riemannian symmetric space is determined on a neighborhood of any point by its metric and curvature tensor at this point. Finally, it is clear that any Lorentzian symmetric space whose sectional curvature is constant and non-zero is of rank one.
\newline

In the solvable case, the classification is given at the algebraic level. Any solvable indecomposable Lorentzian symmetric Lie algebra is completely described in a particular basis by $n-2$ nonzero real numbers.

\begin{theorem}[M. Cahen and N. Wallach, 1970 \cite{CaWa70}]\label{solvabletriple}
Let $(\mathfrak{g}, \sigma, B)$ be a solvable indecomposable Lorentzian symmetric Lie algebra and $\mathfrak{g} = \mathfrak{k} \oplus \mathfrak{p}$ be the decomposition associated to the involution $\sigma$. There exist $\lambda_1, ..., \lambda_{n-2} \in \R_0$ and a basis $$\lbrace Z, U, W_1, ..., W_{n-2}, K_1, ..., K_{n-2} \rbrace$$ of $\mathfrak{g}$ such that
\begin{itemize}
\item $\mathfrak{k} = \R K_1 \oplus ... \oplus \R K_{n-2}$, $\mathfrak{p} = \R Z \oplus \R U \oplus \R W_1 \oplus ... \oplus \R W_{n-2}$,
\item $\mathfrak{z} (\mathfrak{g}) = \R Z$, $[U, K_i] = \lambda_i W_i$, $[U, W_i] = - K_i$, $[W_j, K_i] = \lambda_i \delta_{ij} Z$, $[W_i, W_j] = 0 = [K_i, K_j]$,
\item $B (Z, Z) = 0 = B (U, U)$, $B(Z, U) = 1$, $B (W_i, W_j) = - \delta_{ij}$ and $B (K_i, K_j) = - \lambda_i \delta_{ij}$.
\end{itemize}
\end{theorem}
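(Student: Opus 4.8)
The plan is to use solvability to force the isotropy representation $\mathrm{ad}(\mathfrak{k})\vert_{\mathfrak{p}}$ into a rigid form and then read off the normal form. First I would record the standard preliminaries: from $B(\sigma X,\sigma Y)=B(X,Y)$ one gets $B(\mathfrak{k},\mathfrak{p})=0$, so both $B\vert_{\mathfrak{k}}$ and $B\vert_{\mathfrak{p}}$ are non-degenerate and $B\vert_{\mathfrak{p}}$ is Lorentzian, and the invariance of $B$ makes $\mathrm{ad}(X)\vert_{\mathfrak{p}}$ skew-symmetric for $B\vert_{\mathfrak{p}}$ for every $X\in\mathfrak{k}$. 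The crucial observation is that since $\mathfrak{g}$ is solvable, by Lie's theorem $\mathrm{ad}(X)$ is a nilpotent operator for every $X$ in the derived algebra $[\mathfrak{g},\mathfrak{g}]\supseteq[\mathfrak{p},\mathfrak{p}]=\mathfrak{k}$; hence $\mathfrak{h}:=\mathrm{ad}(\mathfrak{k})\vert_{\mathfrak{p}}$ is a Lie algebra of \emph{nilpotent} skew-symmetric operators on the Lorentzian space $\mathfrak{p}$, which is an extremely restrictive situation.

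Next I would locate the null line. By Engel's theorem $V_0:=\{v\in\mathfrak{p}\mid\mathfrak{h}v=0\}$ is non-zero (if $\mathfrak{h}=0$, effectiveness gives $\mathfrak{k}=0$ and $\mathfrak{g}$ abelian, the flat case, which is not indecomposable). Since $V_0$ is $\mathrm{ad}(\mathfrak{k})$-invariant and proper, indecomposability forces $B\vert_{V_0}$ to be degenerate, and as $B\vert_{\mathfrak{p}}$ is Lorentzian its radical is a single line $\R Z$ with $B(Z,Z)=0$ and $\mathrm{ad}(\mathfrak{k})Z=0$. A one-line computation with the invariance of $B$ then shows $[\mathfrak{p},Z]=0$ as well (for $Z'\in\mathfrak{p}$ the element $[Z,Z']\in\mathfrak{k}$ is $B$-orthogonal to all of $\mathfrak{k}$), so $\R Z\subseteq\mathfrak{z}(\mathfrak{g})$. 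Choosing $U\in\mathfrak{p}$ with $B(Z,U)=1$ and replacing it by $U-\tfrac12 B(U,U)Z$, I may assume $B(U,U)=0$; then $\mathfrak{p}=\R Z\oplus\R U\oplus E$ with $E:=\{Z,U\}^{\perp}$ negative definite of dimension $n-2$, and I write $-\langle\cdot,\cdot\rangle$ for $B\vert_{E}$.

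Then I would compute the action explicitly. For $X\in\mathfrak{k}$ the operator $A:=\mathrm{ad}(X)\vert_{\mathfrak{p}}$ kills $Z$ and preserves $Z^{\perp}=\R Z\oplus E$; the operator it induces on $Z^{\perp}/\R Z\cong E$ is skew-symmetric for the definite form \emph{and} nilpotent, hence zero. Using this together with $B(AU,U)=0$ and skew-symmetry, I get a vector $V_X\in E$ (namely $V_X=AU$) with $AU=V_X$ and $AW=\langle V_X,W\rangle Z$ for $W\in E$, so $\mathrm{ad}(X)\vert_{\mathfrak{p}}$ is completely encoded by $V_X$. Effectiveness makes $X\mapsto V_X$ injective, and indecomposability forces $\mathrm{span}\{V_X\}$ to be all of $E$ (its $\langle\cdot,\cdot\rangle$-orthocomplement in $E$ would otherwise be a proper non-degenerate $\mathrm{ad}(\mathfrak{k})$-invariant subspace); thus $X\mapsto V_X$ is an isomorphism $\mathfrak{k}\xrightarrow{\ \sim\ }E$ and $\dim\mathfrak{k}=n-2$. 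Since $[Z,\mathfrak{p}]=0$ and $[E,E]=0$ (again by $B$-orthogonality to $\mathfrak{k}$), the transvection condition $[\mathfrak{p},\mathfrak{p}]=\mathfrak{k}$ shows that $\mathrm{ad}(U)\vert_{E}\colon E\to\mathfrak{k}$ is a linear isomorphism.

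Finally I would build the basis from the operator $T:=\mathrm{ad}(U)^{2}\vert_{E}$, which is self-adjoint for $\langle\cdot,\cdot\rangle$ because $B(TW,W')=-B(\mathrm{ad}(U)W,\mathrm{ad}(U)W')$, and invertible (if $TW=0$ then $Y:=\mathrm{ad}(U)W$ satisfies $\mathrm{ad}(U)Y=0$, whence $V_Y=0$, so $Y=0$ and $W=0$). Taking an orthonormal eigenbasis $W_1,\dots,W_{n-2}$ of $(E,\langle\cdot,\cdot\rangle)$ with $TW_i=-\lambda_iW_i$, $\lambda_i\in\R_0$, and setting $K_i:=-\mathrm{ad}(U)W_i\in\mathfrak{k}$, I would verify directly — using the Jacobi identity, the formula $AW=\langle V_X,W\rangle Z$, and effectiveness to kill the central brackets — that $\{Z,U,W_1,\dots,W_{n-2},K_1,\dots,K_{n-2}\}$ is a basis with exactly the listed brackets, that $\mathfrak{z}(\mathfrak{g})=\R Z$ (since $\mathfrak{k}\cap\mathfrak{z}(\mathfrak{g})=0$ and $\R Z$ is the entire common kernel of $\mathrm{ad}(\mathfrak{k})\vert_{\mathfrak{p}}$), and that $B$ takes the stated values, the only non-obvious one being $B(K_i,K_j)=B(\mathrm{ad}(U)W_i,\mathrm{ad}(U)W_j)=-B(TW_i,W_j)=-\lambda_i\delta_{ij}$. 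I expect the only genuine difficulty to be the conceptual step in the first paragraph: recognizing that solvability is precisely what makes $\mathrm{ad}(\mathfrak{k})\vert_{\mathfrak{p}}$ nilpotent, and that nilpotent skew-symmetric operators in Lorentzian signature are so constrained that, together with indecomposability, they pin down the algebra up to the eigenvalue data $\lambda_i$; everything afterwards is bookkeeping with the Jacobi identity and the invariance of $B$.
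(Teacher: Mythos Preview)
The paper does not give its own proof of this theorem: it is stated as a citation to Cahen and Wallach and then used as input for the rest of the paper. There is therefore nothing in the paper to compare your argument against.

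That said, your proof is correct. The decisive insight is the one you isolate: solvability puts $\mathfrak{k}=[\mathfrak{p},\mathfrak{p}]\subseteq[\mathfrak{g},\mathfrak{g}]$, so by Lie's theorem every $\mathrm{ad}(X)\vert_{\mathfrak{p}}$ with $X\in\mathfrak{k}$ is nilpotent, and a nilpotent $B$-skew endomorphism of a Lorentzian vector space must annihilate a null line $\R Z$ and induce zero on $Z^{\perp}/\R Z$. Everything else then unfolds as you describe: the linear isomorphism $X\mapsto V_X$ from $\mathfrak{k}$ to $E$ via effectiveness and indecomposability, the vanishing of $[E,E]$ and of $[\mathfrak{k},\mathfrak{k}]$, the isomorphism $\mathrm{ad}(U)\vert_E\colon E\to\mathfrak{k}$ from the transvection condition, and the diagonalization of the self-adjoint invertible operator $T=\mathrm{ad}(U)^{2}\vert_E$ yielding the parameters $\lambda_i$. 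The final bracket and $B$-value verifications are routine.

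Two minor points of presentation. First, the parenthetical after ``$V_0$ is non-zero'' is really the argument that $V_0$ is \emph{proper}: if $V_0=\mathfrak{p}$ then $\mathfrak{h}=0$, whence $\mathfrak{k}=0$ by effectiveness and $\mathfrak{g}$ is abelian, the flat decomposable case; Engel's theorem already gives $V_0\neq 0$. Second, you tacitly use that the triple is transvection (for $\mathfrak{k}=[\mathfrak{p},\mathfrak{p}]$) and effective. Both hold in the paper's setting --- the classification runs through transvection pseudo-Riemannian symmetric Lie algebras, and these are automatically effective --- but it would be cleaner to state this at the outset.
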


\begin{proposition}
Let $(\mathfrak{g}, \sigma, B)$ be a solvable indecomposable Lorentzian symmetric Lie algebra,  explicitly described in theorem \ref{solvabletriple} for a certain choice of parameters $\lambda_1, ..., \lambda_{n-2} \in \R_0$. 
The rank of this symmetric Lie algebra is equal to $2$ and any Cartan subspace is of the form
$$\R Z \oplus \R \Big(U - \sum_{i=1}^{n-2} \lambda_i y_i W_i \Big)$$
for some fixed $y_1, ..., y_{n-2} \in \R$.
\end{proposition}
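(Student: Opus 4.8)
The plan is to argue entirely inside the explicit basis of Theorem \ref{solvabletriple}, using the defining property $\mathfrak{a} = \mathfrak{p}_{\mathfrak{a}}^0$ of a Cartan subspace. Two preliminary remarks organize everything. Since $\mathfrak{z}(\mathfrak{g}) = \R Z$, we have $\mathrm{ad}(Y) Z = 0$ for all $Y$, hence $Z \in \mathfrak{p}_{\mathfrak{a}}^0$ for every subspace $\mathfrak{a} \subset \mathfrak{p}$; in particular every Cartan subspace contains $Z$. Moreover, from the bracket table, for $X = zZ + uU + \sum_i w_i W_i \in \mathfrak{p}$ one gets at once $\mathrm{ad}(X) Z = 0$, $\mathrm{ad}(X) U = \sum_i w_i K_i$, $\mathrm{ad}(X) W_j = -u K_j$ and $\mathrm{ad}(X) K_j = u \lambda_j W_j + \lambda_j w_j Z$; in particular $\mathrm{ad}(U)$ preserves each plane $\R W_i \oplus \R K_i$ and acts there by $W_i \mapsto -K_i$, $K_i \mapsto \lambda_i W_i$, so that $\mathrm{ad}(U)^2 = -\lambda_i\,\mathrm{id}$ on it. Since each $\lambda_i \neq 0$, no power of $\mathrm{ad}(U)$ annihilates a vector with a nonzero $W_i$-component.

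First I would check that $\mathfrak{a}_0 := \R Z \oplus \R U$ is a Cartan subspace, which already gives that the rank equals $2$. We have $[\mathfrak{a}_0, \mathfrak{a}_0] = 0$, so $\mathfrak{a}_0 \subset \mathfrak{p}_{\mathfrak{a}_0}^0$; conversely, testing with $Y = U$, an element $V = \alpha Z + \beta U + \sum_i \gamma_i W_i$ with $\mathrm{ad}(U)^n V = 0$ for some $n$ must have $\gamma_i = 0$ for all $i$ by the last remark, hence $V \in \mathfrak{a}_0$. Thus $\mathfrak{p}_{\mathfrak{a}_0}^0 = \mathfrak{a}_0$, and by the Lepowsky--McCollum theorem every Cartan subspace is then $2$-dimensional.

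Next I would describe all Cartan subspaces $\mathfrak{a}$. Being $2$-dimensional and containing $Z$, we may write $\mathfrak{a} = \R Z \oplus \R X$ with $X = uU + \sum_i w_i W_i$ (after removing a multiple of $Z$). If $u = 0$ then, for every $Y = aZ + bX \in \mathfrak{a}$, the formulas above show that $\mathrm{ad}(Y)^2$ takes values in $\R Z$, so $\mathrm{ad}(Y)^3 = 0$ on all of $\mathfrak{g}$; hence $\mathfrak{p}_{\mathfrak{a}}^0 = \mathfrak{p} \supsetneq \mathfrak{a}$ (recall $n \geq 3$), contradicting that $\mathfrak{a}$ is Cartan. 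So $u \neq 0$, and after rescaling $X$ and setting $y_i := -w_i / \lambda_i$ (legitimate since $\lambda_i \neq 0$), $\mathfrak{a}$ takes the announced form $\R Z \oplus \R\big(U - \sum_i \lambda_i y_i W_i\big)$. For the converse, given arbitrary $y_i \in \R$ and $X := U - \sum_i \lambda_i y_i W_i$, set $\mathfrak{a} := \R Z \oplus \R X$; again $[\mathfrak{a},\mathfrak{a}] = 0$ gives $\mathfrak{a} \subset \mathfrak{p}_{\mathfrak{a}}^0$, and for the reverse inclusion one tests with $Y = X$ and computes the iterates $\mathrm{ad}(X)^k V$ for $V = \alpha Z + \beta U + \sum_i \gamma_i W_i$: the odd iterates are proportional to $\sum_i \lambda_i^m (\gamma_i + \beta \lambda_i y_i) K_i$ and the even ones to $\sum_i \lambda_i^m (\gamma_i + \beta \lambda_i y_i) W_i$ up to a multiple of $Z$, so some power kills $V$ exactly when $\gamma_i = -\beta \lambda_i y_i$ for all $i$, i.e. $V \in \R Z \oplus \R X$. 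Hence $\mathfrak{p}_{\mathfrak{a}}^0 = \mathfrak{a}$.

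The single computational point, and the place where I expect to have to be careful, is this last one: correctly tracking the iterates $\mathrm{ad}(X)^k$ on $\mathfrak{p}$ and invoking $\lambda_i \neq 0$ to turn "annihilated by some power" into the linear condition $\gamma_i = -\beta\lambda_i y_i$. Everything else reduces to the bracket relations and the centrality of $Z$.
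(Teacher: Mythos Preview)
Your argument is correct. The main difference from the paper's proof is the characterization of Cartan subspaces you work with: the paper uses the Lepowsky--McCollum proposition that Cartan subspaces are exactly the \emph{minimal natural $\mathfrak{p}$-subalgebras}, checking naturalness via the nonsingularity of $\mathrm{ad}^2(X_0)$ on $\mathfrak{p}/\mathfrak{a}$, whereas you work directly with the defining condition $\mathfrak{a} = \mathfrak{p}_{\mathfrak{a}}^0$ and compute the iterates of $\mathrm{ad}(X)$ explicitly. Your route is slightly more elementary in that it does not appeal to the equivalence of the two characterizations, at the cost of a short explicit computation of $\mathrm{ad}(X)^k$. The paper's argument is terser because nonsingularity of $\mathrm{ad}^2(U)$ on $\mathfrak{p}/\mathfrak{a}_0$ is immediate from $\mathrm{ad}(U)^2 W_i = -\lambda_i W_i$. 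You also prove the converse (that every subspace of the announced form is indeed a Cartan subspace), which the paper does not, since the statement only asserts the forward inclusion; this extra direction is correct and costs little once the iterates are written down.
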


\begin{proof}
If $\mathfrak{a}$ is a Cartan subspace of $\mathfrak{p}$, then $\mathfrak{z} (\mathfrak{g}) \cap \mathfrak{p} \subset \mathfrak{a}$. Otherwise $\mathfrak{a}$ wouldn't be natural. Therefore, $Z \in \mathfrak{a}$. Nonetheless, $\R Z \neq \mathfrak{a}$ because $\mathrm{ad}^2 (Z)$ doesn't induce a nonsingular endomorphism of $\mathfrak{p} / \mathfrak{a}$. Thus $M$ is not of rank one.

Let us consider $\mathfrak{a}_0 = \R Z \oplus \R U$. It is clearly a $\mathfrak{p}$-subalgebra. Moreover, it is natural since $\mathrm{ad}^2 (U)$ induces a nonsingular endomorphism of $\mathfrak{p} / \mathfrak{a}$. At last, the minimality is obvious. As a conclusion, $\mathfrak{a}_0$ is a Cartan subspace of $\mathfrak{p}$ and $(\mathfrak{g}, \sigma, B)$ is of rank $2$. 

Back to the beginning of the proof, if $\mathfrak{a}$ is any Cartan subspace of $\mathfrak{p}$, we know that it is of dimension $2$ and it contains $Z$.
Then it is of the form
$$\mathfrak{a} = \R Z \oplus \R \Big(\nu U + \sum_{i=1}^{n-2} \mu_i W_i \Big)$$
for some $\nu, \mu_1, ...,\mu_{n-2} \in \R$ not all zero.
Since $\mathrm{ad} (\sum_{i=1}^{n-2} \mu_i W_i)^2$ doesn't induce a nonsingular endomorphism of $\mathfrak{p} / \mathfrak{a}$, then $\nu \neq 0$ hence the result.
\end{proof}

\subsection{Isotropic pseudo-Riemannian symmetric spaces}

On any connected pseudo-Riemannian symmetric space $(M, s, \hat{g})$, the group of isometries, denoted by $I (M, \hat{g})$, acts transitively.
Then the isotropy group $K_x$ of any point $x \in M$ acts on the tangent space $T_x M$ in the following way
$$K_x \times T_xM \rightarrow T_xM : (k, X) \mapsto \tau (k)_{\ast x} (X)$$
where $\tau (k)$ is the action of $k$ on $M$.
The second definition below comes from A. J. Wolf's paper \cite{Wol67} and the first one is a weaker version of it.

\begin{definition}
Let $(M, s, \hat{g})$ be a connected pseudo-Riemannian symmetric space. Then $M$ is said to be \emph{quasi-isotropic} if, for any $x \in M$, the pseudo-spheres in $T_x M$, namely
$$\Sigma_{\alpha} (x) := \lbrace X \in T_x M \mid \hat{g}_x (X, X) = \alpha \rbrace$$
for any $\alpha \in \R_0$, are orbits under the action of the isotropy group $K_x$ of $x$ in $I (M, \hat{g})$, the group of isometries.

Furthermore, $M$ is said to be \emph{isotropic} if it is quasi-isotropic and, for any $x \in M$, the light cone in $T_xM$, namely
$$\Sigma_0 (x) := \lbrace X \in T_xM \mid \hat{g}_x (X, X) = 0, \quad X \ne 0 \rbrace,$$
is also an orbit of the isotropy group $K_x$ of $x$ in $I (M, \hat{g})$.
\end{definition}

\begin{proposition}[J. Orloff, 1987 \cite{Orl87}]\label{propisotropicandrank}
Let $(M, s, \hat{g})$ be a connected pseudo-Riemannian symmetric space whose group of isometries is semisimple. Then $M$ is quasi-isotropic if and only if it is of rank one.
\end{proposition}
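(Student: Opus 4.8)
The plan is to analyze the action of the isotropy group $K_x \subset I(M,\hat g)$ on $T_xM$ via the linear isotropy representation, which identifies $K_x$ (up to covering) with a subgroup $K \subset O(\mathfrak p, B|_{\mathfrak p})$, where $(\mathfrak g, \sigma, B)$ is the pseudo-Riemannian symmetric Lie algebra at $x$ and $K$ integrates $\operatorname{ad}_{\mathfrak g}(\cdot)|_{\mathfrak p} : \mathfrak k \to \mathfrak{gl}(\mathfrak p)$. Quasi-isotropy says that for every nonzero $\alpha$, the pseudo-sphere $\Sigma_\alpha(x)$ is a single $K$-orbit. Since $B|_{\mathfrak p}$ has some signature $(p,q)$, each $\Sigma_\alpha(x)$ is already homogeneous under the full group $O(p,q)$, and $K$ being transitive on it means $K$ is ``large'' inside $O(p,q)$ in a precise sense. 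The strategy is to extract from this transitivity the dimension of a Cartan subspace $\mathfrak a \subset \mathfrak p$, using the Lepowsky--McCollum characterization of rank as the common dimension of Cartan subspaces.

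The key steps, in order, are as follows. First, I would recall that the rank of $(\mathfrak g, \sigma)$ equals $\dim \mathfrak a$ where $\mathfrak a$ is a maximal abelian subspace of $\mathfrak p$ consisting of semisimple elements (in the semisimple setting this is the usual maximal $\mathbb R$-split torus in $\mathfrak p$), and that the rank is also the maximal dimension of a flat totally geodesic submanifold through $x$. Second, for the direction ``rank one $\Rightarrow$ quasi-isotropic'': if $\operatorname{rank} = 1$, then $\mathfrak p$ under $\operatorname{ad}(\mathfrak a)$ decomposes into root spaces $\mathfrak p_\lambda$ for $\lambda$ a multiple of a single restricted root $\alpha$, so $K$ acts with ``cohomogeneity one'' on each level set of the $K$-invariant function $X \mapsto B(X,X)$; a dimension count on the regular orbits, together with the fact that $B$ is $K$-invariant and nondegenerate, forces the $K$-orbits in $\mathfrak p \setminus \{0\}$ on a fixed nonzero level of $B$ to be connected and of full codimension one, hence to coincide with $\Sigma_\alpha(x)$ (one checks $\Sigma_\alpha(x)$ is connected for $\alpha \ne 0$ exactly when, e.g., the relevant signature is not that of a hyperbolic plane in each coordinate — in the semisimple pseudo-Riemannian case this is handled by passing to $SO_0(p,q)$ and its transitivity on pseudo-spheres). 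Third, for the converse ``quasi-isotropic $\Rightarrow$ rank one'': suppose $\operatorname{rank} \ge 2$, pick a Cartan subspace $\mathfrak a$ with $\dim \mathfrak a \ge 2$; then $\exp(\mathfrak a).x$ is a flat totally geodesic submanifold of dimension $\ge 2$, and $B|_{\mathfrak a}$ is nondegenerate (this uses that $\mathfrak a$ is built from semisimple elements and $\mathfrak g$ is semisimple, so $B$ restricted to a Cartan subspace of $\mathfrak p$ is nondegenerate). Inside such a flat $\mathfrak a$, choose $X, Y \in \Sigma_\alpha(x) \cap \mathfrak a$ lying in the same connected component of the pseudo-sphere of the plane $(\mathfrak a, B|_{\mathfrak a})$; the stabilizer condition for quasi-isotropy would require a $k \in K$ with $k X = Y$, but any such $k$ would have to carry the flat $\exp(\mathfrak a).x$ (or at least the $2$-plane $\mathfrak a$) in a way incompatible with the root-space structure, because $\operatorname{ad}(\mathfrak k)$ preserves the decomposition $\mathfrak p = \mathfrak a \oplus [\mathfrak a\text{-complement}]$ only along $\mathfrak k_0 = \mathfrak z_{\mathfrak k}(\mathfrak a)$, which acts trivially on $\mathfrak a$. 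The contradiction is reached by exhibiting two points of $\Sigma_\alpha(x)$ (one in $\mathfrak a$, one obtained by a generic small rotation out of a root space) whose $K$-orbits have different dimension, contradicting that they lie in one orbit $\Sigma_\alpha(x)$.

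The main obstacle I anticipate is the careful bookkeeping of connected components of the pseudo-spheres $\Sigma_\alpha(x)$ in indefinite signature: $\Sigma_\alpha(x)$ can be disconnected, and the identity component of $O(p,q)$ need not be transitive even when the full group is. So the clean statement ``$\Sigma_\alpha(x)$ is a $K$-orbit'' must be read against the convention, used throughout the paper, that in the Lorentzian case one integrates over connected components; correspondingly the proof's dimension count must be done on a single connected component, and the transitivity of $SO_0(p,q)$ on each component of $\{B(X,X) = \alpha\}$ for $\alpha \ne 0$ must be invoked explicitly. The secondary technical point is verifying nondegeneracy of $B$ on a Cartan subspace in the generality of ``semisimple isometry group'' (as opposed to semisimple transvection group); this follows because for a semisimple $(\mathfrak g,\sigma,B)$ one may take $B$ proportional to the Killing form on each simple factor and then $\mathfrak a$ is a maximal split abelian subspace on which the Killing form is positive definite, hence nondegenerate. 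With these two points settled, the rank count via Lepowsky--McCollum closes both directions.
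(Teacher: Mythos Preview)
The paper does not prove this proposition; it is quoted from Orloff \cite{Orl87} without argument, so there is no paper proof to compare against. I can only comment on the proposal itself.

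Your overall strategy is the standard one, but there are genuine gaps in both directions as written. For ``rank one $\Rightarrow$ quasi-isotropic'', the dimension count shows at most that generic $K$-orbits are open in the level set $\Sigma_\alpha(x)$; it does not by itself prove that each connected component of $\Sigma_\alpha(x)$ is a single orbit. There can, a priori, be several open $K$-orbits in a connected hypersurface, separated by lower-dimensional strata. What actually closes this direction is a Cartan-type decomposition for semisimple symmetric pairs: every regular semisimple element of $\mathfrak p$ is $K$-conjugate to an element of a fixed Cartan subspace $\mathfrak a$, and in rank one $\mathfrak a$ is a line, so the $K$-orbit of a regular $X$ is determined by $B(X,X)$ (up to sign/component). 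You should invoke that decomposition explicitly rather than rely on a dimension count.

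For ``quasi-isotropic $\Rightarrow$ rank one'', your contradiction is vaguer than it needs to be. The clean argument is: in the semisimple case two elements of a Cartan subspace $\mathfrak a$ are $K$-conjugate iff they are conjugate under the (finite) Weyl group $W$ of the restricted root system. If $\dim\mathfrak a\ge 2$, the pseudo-sphere $\{X\in\mathfrak a : B(X,X)=\alpha\}$ contains infinitely many points (indeed a positive-dimensional submanifold once $B|_{\mathfrak a}$ is nondegenerate), so it cannot be a single finite $W$-orbit, hence not a single $K$-orbit. Finally, your justification that $B|_{\mathfrak a}$ is nondegenerate is mis-stated: the Killing form is \emph{not} positive definite on $\mathfrak a$ in general pseudo-Riemannian signature (that holds only in the Riemannian case). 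What is true, and sufficient, is that $\mathfrak a$ sits inside a $\sigma$-stable Cartan subalgebra of $\mathfrak g$ on which the Killing form is nondegenerate, and $B|_{\mathfrak p}$ is a nonzero multiple of this on each simple factor.
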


Note that there exist rank-one pseudo-Riemannian symmetric spaces whose group of isometries is semisimple which are not isotropic like the space $X = \mathrm{SL} (n,\R) / \mathrm{GL} (n-1,\R)$ studied by M. T. Kosters and G. Van Dijk in \cite{KosDij86}.


\section{Limit formulas on Lorentzian symmetric spaces of constant sectional curvature}\label{sectionrankoneorbital}

We address the problem of determining a function in terms of its orbital integrals on Lorentzian symmetric spaces of constant sectional curvature.
These are either the flat Lorentzian vector space or, up to a positive constant factor on the metric, one of the two model spaces in theorem \ref{theoremLorentz}.
S. Helgason solved the problem when their dimension is even via a limit formula \cite{Hel59}. 
J. Orloff extended it to rank-one semisimple pseudo-Riemannian symmetric spaces \cite{Orl87} and gave the keys to treat the odd-dimensional Lorentzian symmetric spaces of constant sectional curvature.

\begin{definition}
Let $(M, s, \hat{g})$ be a pseudo-Riemannian symmetric space. Then $M$ is said to be \emph{semisimple} if there exists a Lie subgroup $G$ of $I (M, \hat{g})$ which is semisimple, acts transitively on $M$ and is invariant under the conjugation by $s_{x_0}$ where $x_0$ is a base-point of $M$.
\end{definition}


\subsection{Definition of orbital integrals}\label{subsectiondeforbint}

On the pseudo-Euclidean vector space, that is the space $\R^n$ endowed with the flat metric of signature $(p,q)$, the orbital integrals are defined as pseudo-spherical integrals.

\begin{definition}
Let $\langle ., . \rangle$ be the standard inner product of signature $(p, q)$ on $\R^n$. For any function $f \in \mathcal{C}_c (\R^n)$, the \emph{orbital integrals} of $f$ are the pseudo-radial functions denoted by $(M_+ f)$ and $(M_- f)$ and defined by
$$(M_\pm f) (r) := \frac{1}{r^{n-1}} \int_{\Sigma_{\pm r^2}} f(x)\, d\eta (x), \quad \text{for } r > 0,$$
where $d\eta$ is the measure induced by the metric on the pseudo-spheres centered at $0$ in $\R^n$, namely
$$\Sigma_{r^2} := \lbrace x \in \R^n \mid \langle x, x \rangle = r^2\rbrace,\quad \Sigma_{-r^2} := \lbrace x \in \R^n \mid \langle x, x \rangle = -r^2\rbrace.$$
\end{definition}

Let $(M, s, \hat{g})$ be a semisimple pseudo-Riemannian symmetric space whose metric is of signature $(p,q)$ and $x_0$ a base-point of $M$.
Let $G$ be a semisimple Lie subgroup of $I (M, \hat{g})$ which acts transitively on $M$ and is invariant under the conjugation by $s_{x_0}$.
Then the Lie algebra $\mathfrak{g}$ of $G$ decomposes into the $(\pm 1)$-eigenspaces of the involution given by the conjugation by $s_{x_0}$
$$\mathfrak{g} = \mathfrak{k} \oplus \mathfrak{p}$$
and we get the usual isomorphism $\phi : \mathfrak{p} \rightarrow T_{x_0} M$ defined in section \ref{sectionsymmetric}.
Following J. Orloff's paper \cite{Orl87}, we assume that the isotropy group $K$ of $x_0$ is connected and we define the orbital integrals as follows.

\begin{definition}
Let $\mathfrak{a}$ be a Cartan subspace of $\mathfrak{g}$.
For any function $f \in \mathcal{C}_c (M)$ and any point $x \in M$, the \emph{orbital integrals} of $f$ at $x$ are given by
$$(M^X f) (x) := \int_{K/H_X} f (gk.\mathrm{Exp}_{x_0} (X))\; d\mu (kH_X),$$
where $g \in G$ such that $x = g.x_0$, $H_X$ is the stabilizer of $X$ in $K$ and $d\mu$ is the $K$-invariant measure on $K/H_X$ induced by the metric on the $K$-orbit of $X \in T_{x_0} M$ providing that
$$[\bar{X}, \mathfrak{k}] = \mathfrak{a}^\perp$$
where $\bar{X} \in \mathfrak{p}$ such that $\phi (\bar{X}) = X$ and $\mathfrak{a}^\perp$ is the orthogonal complement of $\mathfrak{a}$ in $\mathfrak{p}$ with respect to the metric $\phi^\ast \hat{g}_{x_0}$.
\end{definition}

\subsection{Limit formulas in the even-dimensional Lorentzian case}\label{subsectionevendimlorentz}

Let $(M, s, \hat{g})$ be a Lorentzian symmetric space of non-zero constant sectional curvature.
On the model spaces introduced in theorem \ref{theoremLorentz}, the group of isometries is respectively $O (2, n-1)$ if $M = Q_{(+1)}$ and $O (1, n)$ if $M = Q_{(-1)}$. Therefore,
$$Q_{(+1)} \simeq SO_0 (2, n-1) / SO_0 (1,n-1), \quad Q_{(-1)} \simeq SO_0 (1,n) / SO_0 (1,n-1),$$
with the base-point $x_0 = (0, ..., 0, 1)$, and the orbit of any vector $X \in T_{x_0}M$ such that $\hat{g}_{x_0} (X, X) \neq 0$ under the action of the isotropy group $K = SO_0 (1,n-1)$ is the connected component of a pseudo-sphere.
We then use the following notation for the orbital integrals
\begin{align*}
&(M^r_+ f) (x) := (M^X f) (x), \quad \text{where } X = (r, 0, ..., 0) \in \R^{1,n-1} \simeq T_{x_0} M,\\
&(\tilde{M}^r_+ f) (x) := (M^X f) (x), \quad \text{where } X = (-r, 0, ..., 0) \in \R^{1,n-1} \simeq T_{x_0} M,\\
&(M^r_- f) (x) := (M^X f) (x), \quad \text{where } X = (0, ..., 0, r) \in \R^{1,n-1} \simeq T_{x_0} M,\\
&(\tilde{M}^r_- f) (x) := (M^X f) (x), \quad \text{where } X = (0, ..., 0, -r) \in \R^{1,n-1} \simeq T_{x_0} M,
\end{align*}
for any $r > 0$. Note that $M^r_- f = \tilde{M}^r_- f$ except when $n = \dim (M) = 2$.
\newline

As in S. Helgason's book \cite{Hel10}, we focus on the model space $M = Q_{(-1)}$ and the orbital integrals $M_+^r f$ to exhibit results and arguments leading to the limit formula. Everything works in the same manner with the other model space and the other series of orbital integrals.

\begin{proposition}[S. Helgason, 1959 \cite{Hel59}]\label{propDAlembertOrbitalint}
Let $\square$ be the Laplace-Beltrami operator associated to the Lorentzian metric $\hat{g}$ on $Q_{(-1)}$.
For any function $f \in \mathcal{C}^\infty_c (Q_{(-1)})$, any point $x \in M$ and any pseudo-radius $0 < r < r_0$,
\begin{align*}
\square (M^r_+ f) (x) &= M^r_+ ( \square f) (x)\\
&= \frac{1}{A(r)} \frac{\partial}{\partial r} \Big( A(r) \frac{\partial}{\partial r} (M^r_+ f) (x) \Big).
\end{align*}
where $A(r) := \sinh^{n-1} (r)$.
\end{proposition}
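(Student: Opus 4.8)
The plan is to establish the Darboux-type equation $\square (M^r_+ f)(x) = M^r_+(\square f)(x)$ by exploiting the fact that $\square$ is an invariant differential operator on $M = Q_{(-1)}$ and that the orbital integral $M^r_+ f$ is obtained by integrating $f$ over a $K$-orbit. First I would use invariance: since $\square$ commutes with the action of the isometry group, and in particular with $K = SO_0(1,n-1)$, and since $d\mu$ is the $K$-invariant measure on the orbit $K/H_X$, differentiating under the integral sign and pulling $\square$ inside gives $\square (M^r_+ f)(x) = \int_{K/H_X} (\square f)(gk.\mathrm{Exp}_{x_0}(X))\, d\mu(kH_X) = M^r_+(\square f)(x)$. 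The point that needs care here is justifying that $\square$ acting in the variable $x$ passes to $\square$ acting on $f$ at the shifted point; this is exactly the statement that $\square$ is invariant under left translation by $g$, combined with $K$-invariance of the measure, so that the two integrals coincide.

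Next I would identify the right-hand side with the radial expression $\frac{1}{A(r)}\frac{\partial}{\partial r}\bigl(A(r)\frac{\partial}{\partial r}(M^r_+ f)(x)\bigr)$ where $A(r) = \sinh^{n-1}(r)$. The idea is that, because everything is homogeneous, it suffices to verify this at the base-point $x_0$. There, the function $r \mapsto (M^r_+ f)(x_0)$ is the mean value of $f$ over the pseudo-sphere of pseudo-radius $r$, and the wave operator $\square$ applied to such a radial mean value reduces to its radial part. Concretely, I would compute the radial part of $\square$ in geodesic polar coordinates adapted to the timelike direction: the hypersurfaces $\{\mathrm{Exp}_{x_0}(tX) : \hat g_{x_0}(X,X) = 1\}$ sweep out the relevant region, and the volume element of the orbit through $\mathrm{Exp}_{x_0}(rX)$ scales like $\sinh^{n-1}(r)$ by Lemma \ref{ExponentDiff} applied to the constant-curvature case (the eigenvalues of $\mathrm{ad}(X)\vert_\mathfrak{p}$ are $0$ and $\pm 1$ on the model space, so $\sum_k \mathrm{ad}(X)^{2k}/(2k+1)!$ contributes a $\frac{\sinh}{\cdot}$-type Jacobian). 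This gives the Sturm--Liouville form with weight $A(r) = \sinh^{n-1}(r)$.

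The main obstacle I anticipate is the second step: correctly computing the radial part of the Laplace--Beltrami operator in the Lorentzian setting and pinning down the weight function. Unlike the Riemannian case, here $M_+^r f$ integrates over only one connected component of a pseudo-sphere (a timelike-distance sphere), and one must check that the induced measure's density with respect to the geodesic radial coordinate is precisely $\sinh^{n-1}(r)$ up to a constant, using the explicit embedding $Q_{(-1)} \subset \R^{1,n}$ and the structure of geodesics there (hyperbolic-type along timelike directions). I would carry this out by parametrizing points as $\cosh(r)\,x_0 + \sinh(r)\,(\text{unit vector in the relevant }SO_0(1,n-1)\text{-orbit direction})$, reading off the induced Riemannian metric on the orbit, and differentiating twice. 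The first step, by contrast, is essentially formal once invariance of $\square$ and $K$-invariance of $d\mu$ are invoked.

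Finally, I would note that the equality $M^r_+(\square f) = \square(M^r_+ f)$ together with the radial identification also requires the bound $0 < r < r_0$ to stay within the range where $\mathrm{Exp}_{x_0}$ restricted to the relevant timelike region is a diffeomorphism onto its image, so that the polar-coordinate computation is valid and no caustics interfere; this is why the statement is local in $r$. The smoothness and compact support of $f$ justify all the differentiations under the integral sign.
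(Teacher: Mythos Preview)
The paper does not supply its own proof of this proposition; it is stated as a result of Helgason~\cite{Hel59} and used without argument. Your outline is correct and is precisely the standard Helgason argument: the first equality follows from $G$-invariance of $\square$ together with $K$-invariance of the orbit measure (so $\square$ commutes with the averaging operator $M^r_+$), and the second equality is obtained by computing the radial part of $\square$ in geodesic polar coordinates on the timelike region, the Jacobian factor $\sinh^{n-1}(r)$ arising exactly as you say from Lemma~\ref{ExponentDiff} applied to a unit timelike vector. Your remarks about the restriction $0 < r < r_0$ and the need for compact support to justify differentiation under the integral are also to the point.
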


\begin{definition} For any function $f \in \mathcal{C}_c (Q_{(-1)})$, its \emph{Riesz potentials} at any point $x \in Q_{(-1)}$ and for any parameter $\lambda \in \mathbb{C}$ such that $\mathrm{Re} (\lambda) > n$, are given by
$$(I^\lambda_+ f) (x) := \frac{1}{H_n (\lambda)} \int_{D^+_x} f(y) \sinh^{\lambda - n} \big(\sqrt{\hat{g}_x (Y, Y)}\big)\, dm(y), \quad y = \mathrm{Exp}_x (Y),$$
where $dm$ is the measure induced by the metric $\hat{g}$ on $Q_{(-1)}$,
$D^+_x$ is the connected component of
$\lbrace \mathrm{Exp}_x (Y) \mid Y \in T_x Q_{(-1)},\; \hat{g}_x (Y, Y) > 0\rbrace$
containing the vector $g.\mathrm{Exp}_{x_0}(1, 0, ..., 0)$ if $x = g.x_0$
and $$H_n (\lambda) := 2^{\lambda - 1} \pi^{\frac{n-2}{2}} \Gamma \Big(\frac{\lambda}{2}\Big) \Gamma \Big(\frac{\lambda + 2 - n}{2}\Big).$$
\end{definition}

The Riesz potentials associate to a function $f$ a one-parameter family of integrals $(I^\lambda f )$ defined for $\lambda \in \mathbb{C}$ such that $\text{Re} (\lambda) > n$.

\begin{proposition}[S. Helgason, 1959 \cite{Hel59}]\label{Lorentzprop}
For any function $f \in \mathcal{C}^\infty_c (Q_{(-1)})$, any point $x \in Q_{(-1)}$ and any parameter $\lambda \in \mathbb{C}$ such that $\text{Re} (\lambda) > n$,
\begin{itemize}
\item[(i)]$\square (I^\lambda_+ f) (x) = I^\lambda_+ (\square f) (x)$,
\item[(ii)] if $\text{Re} (\lambda) > n+2$, $\square (I^\lambda_+ f) (x) = (\lambda - n) (\lambda - 1) (I^\lambda_+ f) (x) + (I^{\lambda -2}_+ f) (x)$,
\item[(iii)] $(I^\lambda_+ f) (x)$ extends holomorphically to $\mathbb{C}$ in the $\lambda$-variable so that the value at $\lambda = 0$ is
$$(I^0_+ f) (x) = f(x).$$
\end{itemize}
\end{proposition}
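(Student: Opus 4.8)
The plan is to transfer the problem to the one-dimensional radial picture furnished by the orbital integrals $M^r_+$, and then to let the $\Gamma$-factors hidden in $H_n(\lambda)$ do the bookkeeping. Disintegrating the Riemannian volume $dm$ of $Q_{(-1)}$ along the pseudo-spheres that foliate $D^+_x$, and using that the pseudo-distance $\sqrt{\hat g_x(Y,Y)}$ of $y=\mathrm{Exp}_x(Y)\in D^+_x$ to $x$ is precisely the radial variable $r$, one rewrites
\[
  (I^\lambda_+ f)(x)\;=\;\frac{1}{H_n(\lambda)}\int_0^{\infty}(M^r_+ f)(x)\,\sinh^{\lambda-1}(r)\,dr,\qquad \mathrm{Re}(\lambda)>n,
\]
where $(M^r_+ f)(x)$ is normalised as in Proposition~\ref{propDAlembertOrbitalint}; the abscissa $\mathrm{Re}(\lambda)>n$ is exactly what is needed for absolute convergence at $r=0^+$, where $(M^r_+ f)(x)$ blows up like $r^{2-n}$ as the pseudo-spheres collapse onto the light cone. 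Besides this reformulation I shall use the Darboux equation of Proposition~\ref{propDAlembertOrbitalint}, which says that $\square$ acts on $r\mapsto(M^r_+ f)(x)$ through the radial operator $\sinh^{1-n}(r)\,\partial_r\,\sinh^{n-1}(r)\,\partial_r$, together with the elementary identity $H_n(\lambda-2)=H_n(\lambda)/\big((\lambda-2)(\lambda-n)\big)$, which follows at once from $\Gamma(z-1)=\Gamma(z)/(z-1)$.

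For (i) I would argue that $\square$, being the Laplace-Beltrami operator of a $G$-invariant metric, commutes with the $G$-action, while the kernel of $I^\lambda_+$ depends only on the $G$-invariant pseudo-distance; differentiating under the integral sign (legitimate once $\mathrm{Re}(\lambda)$ is large enough for the twice-differentiated kernel to stay integrable across the light cone) and using that $\square$ is formally self-adjoint moves $\square$ off the kernel and onto $f$, and Proposition~\ref{propDAlembertOrbitalint} yields $\square_x(M^r_+ f)(x)=(M^r_+\square f)(x)$. Both sides of (i) are holomorphic in $\lambda$ on $\mathrm{Re}(\lambda)>n$ and coincide for $\mathrm{Re}(\lambda)>n+2$, so (i) persists on the whole half-plane by analytic continuation. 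For (ii), on $\mathrm{Re}(\lambda)>n+2$ I substitute $\square_x(M^r_+ f)(x)=\sinh^{1-n}(r)\,\partial_r(\sinh^{n-1}(r)\,\partial_r(M^r_+ f)(x))$ into the formula above and integrate by parts twice in $r$. The boundary terms at $r=\infty$ vanish because $f$ has compact support, and those at $r=0$ vanish precisely because $\mathrm{Re}(\lambda)>n$ makes $\sinh^{\lambda-n}(r)$ and the relevant derivative combinations decay fast enough against the $r^{2-n}$-behaviour of $(M^r_+ f)(x)$. What is left is $(M^r_+ f)(x)$ integrated against the second-order radial operator applied to the kernel $\sinh^{\lambda-n}(r)$; using $\cosh^2=1+\sinh^2$ one computes that this operator sends $\sinh^{\lambda-n}(r)$ to $(\lambda-n)(\lambda-1)\sinh^{\lambda-n}(r)+(\lambda-n)(\lambda-2)\sinh^{\lambda-n-2}(r)$, so, restoring the measure factor $\sinh^{n-1}(r)$, the right-hand side becomes $(\lambda-n)(\lambda-1)(I^\lambda_+ f)(x)+(\lambda-n)(\lambda-2)\,\frac{H_n(\lambda-2)}{H_n(\lambda)}(I^{\lambda-2}_+ f)(x)$, and the $\Gamma$-identity collapses the second coefficient to $1$, giving (ii).

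The holomorphic extension in (iii) I would obtain from (ii) itself: rewriting it as $(I^{\lambda-2}_+ f)(x)=\square(I^\lambda_+ f)(x)-(\lambda-n)(\lambda-1)(I^\lambda_+ f)(x)$ and using (i) to replace $\square(I^\lambda_+ f)$ by $I^\lambda_+(\square f)$, the right-hand side is holomorphic wherever $(I^\lambda_+ f)(x)$ and $(I^\lambda_+\square f)(x)$ are; starting from the half-plane of absolute convergence this extends $(I^\lambda_+ f)(x)$ holomorphically to $\mathrm{Re}(\lambda)>n-2$, then to $\mathrm{Re}(\lambda)>n-4$, and so on to all of $\C$. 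Equivalently one continues directly in the radial integral: split $\int_0^\infty=\int_0^\varepsilon+\int_\varepsilon^\infty$, note that the tail is entire, feed the short-distance asymptotic expansion of $r\mapsto(M^r_+ f)(x)$ into the explicit meromorphic continuation of $\int_0^\varepsilon r^\alpha\sinh^{\lambda-1}(r)\,dr$, and observe that the resulting simple poles, which sit in arithmetic progressions of step $2$, are exactly the ones annihilated by the zeros of $1/H_n(\lambda)$ coming from the two $\Gamma$-factors.

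The hard part will be the value $(I^0_+ f)(x)=f(x)$. Once the continuation is in place, $(I^0_+ f)(x)=\lim_{\lambda\to0}H_n(\lambda)^{-1}J(\lambda)$ with $J(\lambda)=\int_0^\infty(M^r_+ f)(x)\,\sinh^{\lambda-1}(r)\,dr$, and $H_n(\lambda)$ has been engineered so that its behaviour at $\lambda=0$ (governed by $\Gamma(\lambda/2)$, and by $\Gamma((\lambda+2-n)/2)$ when $n$ is even) exactly offsets the pole of $J(\lambda)$ there; the resulting finite limit is a universal constant times a distinguished coefficient in the short-distance expansion of $(M^r_+ f)(x)$, which one identifies with $f(x)$ by comparison with the osculating flat model $\R^{1,n-1}$ at $x$, where the statement is M. Riesz's classical identity $I^0=\mathrm{id}$ for the flat wave operator. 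The genuinely delicate step, which I expect to be the main obstacle, is establishing that short-distance expansion in the first place -- in particular controlling the logarithmic and shifted-power contributions whose shape depends on the parity of $n$ -- and it is exactly at this point that Helgason's analysis in the even-dimensional case and Orloff's refinement in the odd-dimensional case enter.
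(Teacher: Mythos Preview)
The paper does not give its own proof of this proposition: it is quoted as a result of Helgason \cite{Hel59} and used as a black box in the proof of Theorem~\ref{limitformevendim}. The only related content is the radial reformulation \eqref{Lorentzlink}, which you correctly take as your starting point.

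Your sketch is essentially the classical Helgason argument and is sound. The computation for (ii) checks out: with measure factor $\sinh^{n-1}(r)\,dr$ and kernel $\sinh^{\lambda-n}(r)$, two integrations by parts against the radial operator of Proposition~\ref{propDAlembertOrbitalint} produce $(\lambda-n)(\lambda-1)\sinh^{\lambda-1}(r)+(\lambda-n)(\lambda-2)\sinh^{\lambda-3}(r)$, and the $\Gamma$-identity $H_n(\lambda-2)/H_n(\lambda)=1/\big((\lambda-2)(\lambda-n)\big)$ collapses the second coefficient to $1$. The recursive extension for (iii) via $(I^{\lambda-2}_+ f)=I^\lambda_+(\square f)-(\lambda-n)(\lambda-1)I^\lambda_+ f$ is the standard mechanism. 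The one place where your outline remains genuinely incomplete is, as you yourself flag, the identification $(I^0_+ f)(x)=f(x)$: invoking ``comparison with the osculating flat model'' hides the actual analysis (controlling the short-distance behaviour of $(M^r_+ f)(x)$ and matching the pole of the integral against the zero of $1/H_n(\lambda)$ at $\lambda=0$). That step is exactly what Helgason carries out in \cite{Hel59}, so your reference there is appropriate, but be aware that this is where the real work lies rather than in parts (i)--(ii).
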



Finally, the Riesz potentials express in terms of the orbital integrals in the following way 
\begin{equation}\label{Lorentzlink}
(I^\lambda_+ f) (x) = \frac{1}{H_n (\lambda)} \int_0^\infty (M^r_+ f) (x) \sinh^{\lambda - 1} (r) dr.
\end{equation}
This yields the desired limit formula whose proof is sketched below.

\begin{theorem}[S. Helgason, 1959 \cite{Hel59}]\label{limitformevendim}
We assume $n := \text{dim} (Q_{(-1)}) > 2$ to be even. Then there exists a polynomial $P$ and a real number $c$ such that, for any $f \in \mathcal{C}^\infty_c (Q_{(-1)})$,
$$f(x) = c. \underset{r \rightarrow 0^+}{\lim}\; r^{n-2} P (\square) (M^r_+ f) (x),$$
where $\square$ is the associated Laplace-Beltrami operator on $Q_{(-1)}$.
\end{theorem}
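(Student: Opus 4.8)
The plan is to follow S.~Helgason's scheme: turn the recursion and holomorphic continuation for the Riesz potentials (Proposition~\ref{Lorentzprop}) into an identity $f = P(\square)\,I^{n-2}_+ f$, and then evaluate $I^{n-2}_+ f$ via the integral representation \eqref{Lorentzlink}. For the first point, rewrite Proposition~\ref{Lorentzprop}(ii), valid for $\mathrm{Re}(\lambda)>n+2$, as $I^{\lambda-2}_+ f = \bigl(\square-(\lambda-n)(\lambda-1)\bigr)I^{\lambda}_+ f$. Both sides are entire in $\lambda$ — $I^{\lambda}_+ f$ by Proposition~\ref{Lorentzprop}(iii), and applying the fixed operator $\square$ in the space variable preserves holomorphy — so the identity holds for all $\lambda\in\C$. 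Since $n$ is even, iterating it over $\lambda = 2,4,\dots,n-2$ and using $I^{0}_+ f = f$ from Proposition~\ref{Lorentzprop}(iii) yields
\[
f = P(\square)\,I^{n-2}_+ f,\qquad P(t) := \prod_{j=1}^{(n-2)/2}\bigl(t-(2j-n)(2j-1)\bigr),
\]
a polynomial of degree $(n-2)/2$, so $P(\square)$ has order $n-2$. (Evenness of $n$ is essential here: for $n$ odd the exponent $n-2$ is not reachable from $0$ in steps of two — this is why the odd case requires J.~Orloff's refinement.)

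Next, fix $x$ and set $\Phi(\lambda) := \int_0^\infty (M^r_+ f)(x)\,\sinh^{\lambda-1}(r)\,dr$, so that \eqref{Lorentzlink} reads $(I^{\lambda}_+ f)(x) = \Phi(\lambda)/H_n(\lambda)$. The factor $\Gamma\bigl(\tfrac{\lambda+2-n}{2}\bigr)$ gives $H_n$ a simple pole at $\lambda = n-2$, with residue $2^{n-2}\pi^{(n-2)/2}\Gamma\bigl(\tfrac{n-2}{2}\bigr) =: 1/c \neq 0$ (for $n>2$ even we have $n-2\notin\{0,-2,-4,\dots\}$, so the factor $\Gamma(\lambda/2)$ stays regular). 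As $(I^{\lambda}_+ f)(x)$ is entire, $\Phi$ has at most a simple pole at $\lambda = n-2$ and
\[
(I^{n-2}_+ f)(x) = \frac{\mathrm{Res}_{\lambda=n-2}\Phi}{\mathrm{Res}_{\lambda=n-2}H_n} = c\,\mathrm{Res}_{\lambda=n-2}\Phi .
\]
Using $\sinh^{\lambda-1}(r)=r^{\lambda-1}(1+O(r^2))$ near $r=0$ together with the behaviour of the orbital integral as its pseudo-radius collapses onto the light cone — namely that $r\mapsto r^{n-2}(M^r_+ f)(x)$ extends continuously to $r=0$ with a convergent expansion there — one identifies this residue with $\lim_{r\to 0^+} r^{n-2}(M^r_+ f)(x)$, so that $(I^{n-2}_+ f)(x) = c\,\lim_{r\to 0^+} r^{n-2}(M^r_+ f)(x)$.

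Finally, $\square$ commutes with $I^{\lambda}_+$ (Proposition~\ref{Lorentzprop}(i)), hence $f = P(\square)I^{n-2}_+ f = I^{n-2}_+\bigl(P(\square)f\bigr)$ with $P(\square)f\in\mathcal{C}^\infty_c(Q_{(-1)})$; applying the previous paragraph to $h:=P(\square)f$ and then Proposition~\ref{propDAlembertOrbitalint} (iterated, so that $M^r_+\circ P(\square)=P(\square)\circ M^r_+$) gives
\[
f(x) = (I^{n-2}_+ h)(x) = c\,\lim_{r\to 0^+} r^{n-2}(M^r_+ h)(x) = c\,\lim_{r\to 0^+} r^{n-2}\,P(\square)(M^r_+ f)(x),
\]
which is the desired formula. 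The algebraic step and this last assembly are routine given Propositions~\ref{propDAlembertOrbitalint} and \ref{Lorentzprop} and relation \eqref{Lorentzlink}; the delicate point — and the one I expect to be the main obstacle — is the residue identification, i.e.\ establishing that $r^{n-2}(M^r_+ f)(x)$ actually converges as $r\to 0^+$ and that its limit equals $\mathrm{Res}_{\lambda=n-2}\Phi$. This needs an explicit enough description of the $K$-orbital integral near the light cone, and it is precisely here that the evenness of $n$, the inequality $n>2$, and the smoothness and compact support of $f$ are genuinely required.
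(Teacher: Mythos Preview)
Your proposal is correct and follows essentially the same route as the paper's proof: both derive $f = P(\square)\,I^{n-2}_+ f$ from the recursion in Proposition~\ref{Lorentzprop}, evaluate $(I^{n-2}_+ f)(x)$ as a constant times $\lim_{r\to 0^+} r^{n-2}(M^r_+ f)(x)$ via \eqref{Lorentzlink}, and finish by commuting $P(\square)$ through $M^r_+$ using Proposition~\ref{propDAlembertOrbitalint}. The only cosmetic difference is in the second step: the paper rewrites \eqref{Lorentzlink} as a Riemann--Liouville integral $\frac{1}{\Gamma(\mu)}\int_0^\infty F(r)\sinh^{\mu-1}(r)\,dr$ with $F(r)=\sinh^{n-2}(r)(M^r_+ f)(x)$ and $\mu=\lambda-n+2$, invoking the classical fact that this tends to $F(0^+)$ as $\mu\to 0$, whereas you compute the residue of $H_n$ at $\lambda=n-2$ and match it against the residue of $\Phi$ --- the two computations are the same up to notation, and your constant $c=(4\pi)^{(2-n)/2}/\Gamma\bigl(\tfrac{n-2}{2}\bigr)$ agrees with the paper's.
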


\begin{proof}
First, when the dimension $n$ is strictly greater than $2$, the limit
$$\underset{r \rightarrow 0^+}{\lim}\; r^{n-2} (M^r_+ f) (x) = \underset{r \rightarrow 0^+}{\lim}\; \sinh^{n-2} (r) (M^r_+ f) (x)$$
exists. Moreover, thanks to formula \ref{Lorentzlink}, the Riesz potential $(I^\lambda_+ f) (x)$ is equal to the following Riemann-Liouville integral
$$\frac{1}{\Gamma (\mu)} \int_0^\infty F(r) \sinh^{\mu-1} (r) dr,$$
where $F(r) := \sinh^{n-2} (r) (M^r_+ f) (x)$ and $\mu := \lambda - n + 2$. We thus get
$$(I^{n-2}_+ f) (x) =  \frac{(4\pi)^{(2-n)/2}}{\Gamma ((n-2)/2)} \underset{r \rightarrow 0^+}{\lim}\; F(r) = \frac{(4\pi)^{(2-n)/2}}{\Gamma ((n-2)/2)} \underset{r \rightarrow 0^+}{\lim}\; r^{n-2} (M^r_+ f) (x).$$

Since $n$ is also even, we deduce from proposition \ref{Lorentzprop} the existence of a polynomial $P$ such that
$$P(\square) (I^{n-2}_+ f) (x) = I^{n-2}_+ (P(\square) f) (x) = f(x).$$
Therefore,
$$f(x) = \frac{(4\pi)^{(n-2)/2}}{\Gamma ((n-2)/2)} \underset{r \rightarrow 0^+}{\lim}\; r^{n-2} M^r_+ (P (\square) f)) (x)$$
and proposition \ref{propDAlembertOrbitalint} leads to the limit formula.
\end{proof}

\subsection{Generalization to rank-one semisimple symmetric spaces}

Let $(M, s, \hat{g})$ be a semisimple pseudo-Riemannian symmetric space whose metric is of signature $(p,q)$ and $x_0$ a base-point of $M$.
We assume that $M$ is of rank one. Then by proposition \ref{propisotropicandrank}, $M$ is quasi-isotropic.
Like in the Lorentzian case in subsection \ref{subsectionevendimlorentz}, the orbital integrals of a function $f \in \mathcal{C}_c (M)$, namely $M^X f$, for vectors $X \in T_{x_0}M$ such that $\hat{g}_{x_0} (X, X)$ are then integrals over connected components of the pseudo-spheres in $M$. We also adopt the following alternative notation
\begin{align*}
&(M^r_+ f) (x) := (M^X f) (x), \quad \text{where } X = (r, 0, ..., 0) \in \R^{p,q} \simeq T_{x_0} M,\\
&(M^r_- f) (x) := (M^X f) (x), \quad \text{where } X = (0, ..., 0, r) \in \R^{p,q} \simeq T_{x_0} M,
\end{align*}
for any $r > 0$.

\begin{remark}
There exists $r_0 > 0$ such that the exponential mapping of $M$ at $x_0$ is a diffeomorphism from the open subset $$\lbrace X \in T_{x_0} M \mid -r^2_0 < \hat{g}_{x_0} (X, X) < r^2_0 \rbrace$$ of $T_{x_0}M$ to its image.
Therefore, the orbital integrals on $M$ at the base-point $x_0$ are the pseudo-spherical integrals on the pseudo-Euclidean vector space through the exponential mapping except in the Lorentzian case where $\Sigma_{r^2}$ is not connected. More precisely, whenever $p > 1$ and $q > 1$, for any function $f \in \mathcal{C}_c (M)$ and any $0 < r < r_0$,
$$(M^r_\pm f) (x_0) = M_\pm (f \circ \mathrm{Exp}_{x_0}) (r),$$
where we identify $(T_{x_0}M, \hat{g}_{x_0})$ with $(\R^n, \langle .,. \rangle_{p,q})$.
\end{remark}

In order to get a limit formula on $M$, J. Orloff solved the problem of determining a function in terms of its orbital integrals on the flat space $\R^n$ endowed with the standard inner product of signature $(p,q)$. He studied the generalized Riesz potentials defined below.

\begin{theorem}[J. Orloff, 1987 \cite{Orl87}]\label{flatRieszpot}
Let $D_+ := \lbrace x \in \R^n \mid \langle x, x \rangle > 0\rbrace$ and $D_- := \lbrace x \in \R^n \mid \langle x, x \rangle < 0 \rbrace$. For any function $f \in \mathcal{S} (\R^n)$, we define its \emph{Riesz potentials} for any parameter $\lambda \in \C$ such that $\mathrm{Re} (\lambda) > n$
\begin{align*}
&I^\lambda_+ f := \frac{1}{H_n (\lambda)} \int_{D_+} f(x) \vert \langle x, x \rangle \vert^{\frac{\lambda - n}{2}} dx,\\
&I^\lambda_- f := \frac{1}{H_n (\lambda)} \int_{D_-} f(x) \vert \langle x, x \rangle \vert^{\frac{\lambda - n}{2}} dx,\\
&I^\lambda_0 f := \Gamma \Big(\frac{\lambda - n + 2}{2} \Big) \Big( I^\lambda_+ f - \cos \Big( \frac{\lambda - n + 2}{2} \pi \Big) I^\lambda_- f \Big),
\end{align*}
where $H_n (\lambda) := 2^{\lambda - 1} \pi^{\frac{n-2}{2}} \Gamma \big( \frac{\lambda}{2}\big) \Gamma \big( \frac{\lambda - n + 2}{2}\big)$. Then
\begin{enumerate}
\item $I^\lambda_+ f$, $I^\lambda_- f$ and $I^\lambda_0 f$ extend to entire functions in $\lambda$,
\item $I^{\lambda+2}_+ (Lf) = I^\lambda_+ f$ and $I^{\lambda+2}_- (Lf) = - I^\lambda_- f$ where $L$ is the Laplace-Beltrami operator associated to $\langle ., . \rangle$,
\item $I^0_+ f = 2 \sin (p\frac{\pi}{2}) f(0)$ and $I^0_- f = 2 \sin (q \frac{\pi}{2}) f(0),$
\item The map $(f \mapsto I^\lambda_{\pm} f)$ is a tempered distribution for all $\lambda$,
\item For $p$ and $q$ both even, $I^0_0 f = \frac{(-1)^{q/2} 2\pi}{(\frac{n-2}{2})!} f(0)$.
\end{enumerate}
\end{theorem}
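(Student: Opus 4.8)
The plan is to follow the classical analytic-continuation argument for Riesz distributions (M.\ Riesz; Gelfand--Shilov), transported to the quadratic form of signature $(p,q)$. Write $x = (u,v) \in \R^p \times \R^q$, so $\langle x,x\rangle = Q(x) := |u|^2 - |v|^2$, $D_\pm = \{\pm Q > 0\}$, and $L = \Delta_u - \Delta_v$ is the d'Alembertian. For $\Re(\lambda) > n$ the weight $|Q|^{(\lambda-n)/2}$ is locally integrable across the light cone and $f \in \mathcal S(\R^n)$ controls the behaviour at infinity, so the defining integrals converge absolutely and, differentiating under the integral sign, $\lambda \mapsto I^\lambda_+ f,\ I^\lambda_- f,\ I^\lambda_0 f$ are holomorphic on $\{\Re(\lambda) > n\}$.

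The identity (2) is the engine and I would establish it first. For $\Re(\lambda)$ large enough that the kernel $|Q|^{(\lambda+2-n)/2}$ and its first derivatives vanish on the light cone, integration by parts moves $L$ from $f$ onto the kernel with no boundary term. A direct computation with $Q = |u|^2 - |v|^2$ gives $L\big(Q^s\big) = 2s(2s+n-2)\,Q^{s-1}$ on $D_+$ and $L\big((-Q)^s\big) = -2s(2s+n-2)\,(-Q)^{s-1}$ on $D_-$, the sign being the only difference. With $s = \tfrac{\lambda+2-n}{2}$ the scalar becomes $\lambda(\lambda+2-n)$, and $\Gamma(z+1) = z\Gamma(z)$ gives $H_n(\lambda+2)/H_n(\lambda) = \lambda(\lambda+2-n)$; hence $I^{\lambda+2}_\pm(Lf) = \pm I^\lambda_\pm f$, first for $\Re(\lambda) > n$ and then everywhere by continuation. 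Iterating, $I^\lambda_\pm f = (\pm 1)^k I^{\lambda+2k}_\pm(L^k f)$ with right-hand side holomorphic for $\Re(\lambda) > n - 2k$; letting $k \to \infty$ proves (1) for $I^\lambda_\pm$, and the same formula proves (4): for any $\lambda$, $f \mapsto I^\lambda_\pm f$ is the convergent integral $I^{\lambda+2k}_\pm$ (bounded by finitely many Schwartz seminorms) precomposed with the continuous map $L^k : \mathcal S \to \mathcal S$, hence tempered.

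For (1) applied to $I^\lambda_0$ one must further check that the bracket $I^\lambda_+ f - \cos\!\big(\tfrac{\lambda-n+2}{2}\pi\big) I^\lambda_- f$ vanishes at $\lambda = n-2, n-4, \dots$, so as to cancel the poles of the prefactor $\Gamma\!\big(\tfrac{\lambda-n+2}{2}\big)$. This follows from the Gelfand--Shilov relations among the distributions $Q_+^s$, $Q_-^s$, $(Q\pm i0)^s$ for a non-degenerate quadratic form, which exhibit $I^\lambda_0 f$ as a constant multiple of the (manifestly entire) family attached to $Q + i0$. I expect this step --- and more generally the accounting of the $\Gamma$ and cosine factors so that $H_n$ together with the prefactor in $I^\lambda_0$ yields exactly an entire function and exactly the constants claimed below --- to be where the real care is required; everything else is mechanical.

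Finally (3) and (5) are residue computations at $\lambda = 0$. In polar coordinates $u = s\omega$, $v = \rho\theta$ with $(\omega,\theta) \in S^{p-1}\times S^{q-1}$, the angular integrations produce the sphere volumes, whose $\Gamma$-factors partially cancel $H_n(\lambda)$, leaving a one-variable Beta / Riemann--Liouville integral whose meromorphic continuation has a simple pole at $\lambda = 0$; this is matched by the simple zero of $1/H_n(\lambda)$ there (from $1/\Gamma(\lambda/2)$), so $I^0_\pm f$ is a finite multiple of $f(0)$ --- equivalently $I^0_\pm$ is $c\,\delta_0$, which one can also see by noting that the built-in factor $1/H_n(\lambda)$ kills the bulk $Q^{-n/2}$-term of the continued kernel at $\lambda = 0$, leaving only a distribution supported at the origin. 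The constant, and the factors $\sin(p\tfrac{\pi}{2})$, $\sin(q\tfrac{\pi}{2})$, come out via $\Gamma(z)\Gamma(1-z) = \pi/\sin(\pi z)$ applied to the sphere-volume $\Gamma$'s (or by testing against a Gaussian $f(x) = e^{-|u|^2 - |v|^2}$). Statement (5) is the same computation for $I^\lambda_0$: when $p,q$ are even the limits in (3) vanish, but $\Gamma\!\big(\tfrac{\lambda-n+2}{2}\big)$ has a pole at $\lambda = 0$ (since $n$ is even) that rescues a nonzero value, and $\cos\!\big(\tfrac{\lambda-n+2}{2}\pi\big)$ at $\lambda = 0$ supplies the sign $(-1)^{q/2}$.
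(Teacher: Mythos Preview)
The paper does not supply its own proof of this theorem: it is quoted from Orloff \cite{Orl87} and used as a black box in the subsequent analysis of the solvable case. Your outline is the standard route (Riesz; Gelfand--Shilov) and is correct. The functional equation (2), obtained from $L(Q^s) = 2s(2s+n-2)Q^{s-1}$ together with the matching ratio $H_n(\lambda+2)/H_n(\lambda) = \lambda(\lambda+2-n)$, is indeed the engine, and iterating it yields (1) and (4) exactly as you describe. You have also correctly identified the one genuinely delicate point: the entirety of $I^\lambda_0$ requires the bracket to vanish at the poles of the prefactor $\Gamma\big(\tfrac{\lambda-n+2}{2}\big)$, and invoking the $(Q\pm i0)^s$ family from Gelfand--Shilov is the clean way to handle this. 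The residue computations for (3) and (5) are, as you say, mechanical once the bipolar decomposition and the reflection formula are in place.
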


\begin{theorem}[J. Orloff, 1987 \cite{Orl87}]
Whenever $p > 1$ and $q > 1$, for any function $f \in \mathcal{C}^\infty_c (\R^n)$, we have the following limit formulas.
\begin{enumerate}
\item If $p$ and $q$ are both odd then
$$\underset{r \rightarrow 0^+}{\lim} r^{n-2} \Big( \frac{d^2}{dr^2} + \frac{n-1}{r} \frac{d}{dr} \Big)^{\frac{n-2}{2}} (M_+ f) (r) = c f(0)$$
and $$\underset{r \rightarrow 0^+}{\lim} r^{n-2} \Big( -\frac{d^2}{dr^2} - \frac{n-1}{r} \frac{d}{dr} \Big)^{\frac{n-2}{2}} (M_- f) (r) = c f(0),$$
\item If $p$ is odd and $q$ is even then
$$\underset{r \rightarrow 0^+}{\lim} \Big( \frac{d}{dr} \Big)^{n-2} r^{n-2} (M_+ f) (r) = c f(0),$$
\item If $p$ is even and $q$ is odd then
$$\underset{r \rightarrow 0^+}{\lim} \Big( \frac{d}{dr} \Big)^{n-2} r^{n-2} (M_- f) (r) = c f(0),$$
\item If $p$ and $q$ are both even then
\end{enumerate}
$$\underset{r \rightarrow 0^+}{\lim} \Big( \frac{d}{dr} \Big)^{n-2} r^{n-2} (M_+ f) (r) + (-1)^\frac{n}{2} \underset{r \rightarrow 0^+}{\lim} \Big( \frac{d}{dr} \Big)^{n-2} r^{n-2} (M_- f) (r) = c f(0),$$
where, in each equation, $c$ is a nonzero constant independent of $f$.
\end{theorem}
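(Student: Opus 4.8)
The plan is to express everything through the Riesz potentials $I^{\lambda}_{\pm}f$ of Theorem~\ref{flatRieszpot}. The first step is a polar decomposition: foliating $D_{\pm}=\{x:\pm\langle x,x\rangle>0\}$ by the pseudo-spheres $\Sigma_{\pm r^{2}}$, on which $|\langle x,x\rangle|^{(\lambda-n)/2}\equiv r^{\lambda-n}$ and along whose unit (pseudo-)normal $\partial_{r}$ Lebesgue measure disintegrates as $dx=dr\,d\eta$, one obtains, for $\mathrm{Re}(\lambda)$ large and then by analytic continuation,
$$\int_{0}^{\infty}(M_{\pm}f)(r)\,r^{\lambda-1}\,dr=H_{n}(\lambda)\,I^{\lambda}_{\pm}f .$$
Since the right-hand side is $H_{n}(\lambda)$ times an entire function (Theorem~\ref{flatRieszpot}(1)), the meromorphic structure of this Mellin integral is completely explicit. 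The second ingredient is the flat counterpart of Proposition~\ref{propDAlembertOrbitalint}: since $L$ commutes with $O(p,q)$ and its radial part on pseudo-radial functions on $D_{\pm}$ is $\varepsilon_{\pm}(\frac{d^{2}}{dr^{2}}+\frac{n-1}{r}\frac{d}{dr})$ with $\varepsilon_{+}=1$ and $\varepsilon_{-}=-1$, one has the Darboux identity $M_{\pm}(L^{k}f)=\bigl(\varepsilon_{\pm}(\frac{d^{2}}{dr^{2}}+\frac{n-1}{r}\frac{d}{dr})\bigr)^{k}(M_{\pm}f)$.

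When $p$ and $q$ are both odd, $n$ is even and $I^{0}_{\pm}f$ is a nonzero multiple of $f(0)$ (Theorem~\ref{flatRieszpot}(3)). I would then run Helgason's argument from the proof of Theorem~\ref{limitformevendim}: iterate the functional equation $I^{\lambda+2}_{\pm}(Lf)=\pm I^{\lambda}_{\pm}f$ of Theorem~\ref{flatRieszpot}(2) exactly $\frac{n-2}{2}$ times to reach $I^{0}_{\pm}f=(\pm1)^{(n-2)/2}I^{n-2}_{\pm}(L^{(n-2)/2}f)$; recognise $I^{n-2}_{\pm}g$ as a Riemann--Liouville integral whose value at parameter $0$ equals $c_{n}\lim_{r\to0^{+}}r^{n-2}(M_{\pm}g)(r)$ with $c_{n}=(4\pi)^{(2-n)/2}/\Gamma(\frac{n-2}{2})\neq0$; and substitute $g=L^{(n-2)/2}f$, applying the Darboux identity. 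The operator $\bigl(\varepsilon_{\pm}(\frac{d^{2}}{dr^{2}}+\frac{n-1}{r}\frac{d}{dr})\bigr)^{(n-2)/2}$ that results is exactly the one in item~1 of the statement, and the constant comes out a nonzero multiple of $\sin(p\frac{\pi}{2})\in\{\pm1\}$, resp.\ $\sin(q\frac{\pi}{2})\in\{\pm1\}$.

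In the remaining cases ($n$ odd, and $n$ even with $p,q$ both even) I would instead work with $G_{\pm}(r):=(\frac{d}{dr})^{n-2}\bigl(r^{n-2}(M_{\pm}f)(r)\bigr)$. As $r^{n-2}(M_{\pm}f)(r)$ is bounded near $0$, integrating by parts $n-2$ times (valid for $\mathrm{Re}(\nu)$ large) and using the first identity gives
$$\int_{0}^{\infty}G_{\pm}(r)\,r^{\nu-1}\,dr=(-1)^{n-2}\,\frac{\Gamma(\nu)}{\Gamma(\nu-n+2)}\,H_{n}(\nu)\,I^{\nu}_{\pm}f ,$$
where $\Gamma(\nu)/\Gamma(\nu-n+2)=(\nu-1)\cdots(\nu-n+2)$ cancels the poles of $\Gamma(\frac{\nu-n+2}{2})$ lying in $\{1,\dots,n-2\}$. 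The crucial point is that this has at most a simple pole at $\nu=0$: for $n$ odd this is clear since $\Gamma(\frac{2-n}{2})$ is finite, and for $n$ even with $p,q$ both even the double pole of $\Gamma(\frac{\nu}{2})\Gamma(\frac{\nu-n+2}{2})$ there is lowered by the simple zero $I^{0}_{\pm}f=0$ (Theorem~\ref{flatRieszpot}(3)). Hence $G_{\pm}$ extends continuously to $r=0$ with $G_{\pm}(0)=\mathrm{Res}_{\nu=0}\int_{0}^{\infty}G_{\pm}(r)r^{\nu-1}\,dr$, and computing this residue (duplication formula) gives: for $n$ odd, $G_{\pm}(0)=(n-2)!\,\pi^{(n-2)/2}\Gamma(\frac{2-n}{2})\,I^{0}_{\pm}f$, a nonzero multiple of $f(0)$ exactly when $p$, resp.\ $q$, is odd --- items~2 and~3; for $n$ even with $p,q$ even, $G_{\pm}(0)$ is a fixed nonzero multiple of $\partial_{\nu}I^{\nu}_{\pm}f|_{\nu=0}$, and expanding $I^{\lambda}_{0}f=\Gamma(\frac{\lambda-n+2}{2})\bigl(I^{\lambda}_{+}f-\cos(\frac{\lambda-n+2}{2}\pi)I^{\lambda}_{-}f\bigr)$ about $\lambda=0$ --- a simple pole against the simultaneous vanishing of the bracket --- identifies $I^{0}_{0}f=\frac{(-1)^{q/2}2\pi}{((n-2)/2)!}f(0)$ (Theorem~\ref{flatRieszpot}(5)) as a nonzero multiple of $\partial_{\nu}I^{\nu}_{+}f|_{\nu=0}+(-1)^{n/2}\partial_{\nu}I^{\nu}_{-}f|_{\nu=0}$, whence $G_{+}(0)+(-1)^{n/2}G_{-}(0)=c\,f(0)$ with $c\neq0$ --- item~4.

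The main obstacle is the regularity statement underlying these residue evaluations: one must know a priori that $r^{n-2}(M_{\pm}f)(r)$ admits a genuine asymptotic expansion at $r=0$ with no $\log r$ term at the order that survives $n-2$ differentiations, so that $G_{\pm}$ is continuous at $0$ and the Riemann--Liouville reading $\frac{1}{\Gamma(\nu)}\int_{0}^{\infty}G_{\pm}(r)r^{\nu-1}\,dr|_{\nu=0}=G_{\pm}(0)$ is legitimate. This can be obtained either directly, by Taylor-expanding $f$ in coordinates that flatten the pseudo-spheres (as Helgason does in the Lorentzian case), or by a Tauberian argument reading the expansion off the meromorphic structure of $H_{n}(\lambda)I^{\lambda}_{\pm}f$. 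In either form, the pole--zero bookkeeping above --- $\Gamma$-pole order $2$ (for $n$ even) or $1$ (for $n$ odd) at $\nu=0$ against zero-order $\geq1$ (when $p$, resp.\ $q$, is even) or $0$ of $I^{\nu}_{\pm}f$ there --- is precisely what makes $(\frac{d}{dr})^{n-2}r^{n-2}$ converge in items~2--4 but diverge (through a $\log r$ term) when $p$ and $q$ are both odd, which is why that case must instead pass through $L$.
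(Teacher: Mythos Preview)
The paper does not give a proof of this theorem; it is quoted from Orloff's 1987 paper as an external result, immediately after stating the properties of the flat Riesz potentials (Theorem~\ref{flatRieszpot}), with no argument supplied. So there is no proof in the paper to compare your proposal against.

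That said, your strategy is the natural one and is precisely what the paper's surrounding material suggests: the Mellin identity $\int_{0}^{\infty}(M_{\pm}f)(r)\,r^{\lambda-1}\,dr=H_{n}(\lambda)\,I^{\lambda}_{\pm}f$ is the flat analogue of formula~(\ref{Lorentzlink}), and your treatment of item~1 reproduces verbatim the Helgason argument sketched in the proof of Theorem~\ref{limitformevendim}, with the flat radial Laplacian in place of its hyperbolic version. For items~2--4, your integration-by-parts computation and the resulting pole/zero bookkeeping at $\nu=0$ are correct; in particular your reading of $I^{0}_{0}f$ as (a constant times) $\partial_{\nu}I^{\nu}_{+}f|_{0}+(-1)^{n/2}\partial_{\nu}I^{\nu}_{-}f|_{0}$, via the simple pole of $\Gamma(\frac{\lambda-n+2}{2})$ against the simultaneous vanishing of $I^{0}_{\pm}f$, is exactly how Theorem~\ref{flatRieszpot}(5) enters. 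You have also correctly isolated the one genuine analytic input not supplied by Theorem~\ref{flatRieszpot}: the a~priori regularity of $r^{n-2}(M_{\pm}f)(r)$ at $r=0$ (absence of a surviving $\log r$ term) needed to pass from the Mellin residue to the pointwise value $G_{\pm}(0)$. This is indeed handled in Orloff's paper by a direct expansion in adapted coordinates, as you indicate; without it the residue argument is formal.
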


Then the Riesz potentials and the limit formulas are lifted to the symmetric space $M$ via its exponential mapping.

\begin{theorem}[J. Orloff, 1987 \cite{Orl87}]
For $p > 1$ and $q > 1$, and $f \in \mathcal{C}^\infty_c (M)$,
\begin{enumerate}
\item If $p$ and $q$ are both odd,
$$\underset{r \rightarrow 0^+}{\lim} \sinh^{n-2} (r) P_+ (L^+_r) (M^r_+ f) (x_0) = c f(x_0),$$
and $$\underset{r \rightarrow 0^+}{\lim} \sin^{n-2} (r) P_- (L^-_r) (M^r_- f) (x_0) = c f(x_0),$$
\item If $p$ is odd and $q$ is even,
$$\underset{r \rightarrow 0^+}{\lim} \Big( \frac{d}{dr} \Big)^{n-2} r^{n-2} (M^r_+ f) (x_0) = c f(x_0),$$
\item If $p$ is even and $q$ is odd,
$$\underset{r \rightarrow 0^+}{\lim} \Big( \frac{d}{dr} \Big)^{n-2} r^{n-2} (M^r_- f) (x_0) = c f(x_0),$$
\item If $p$ and $q$ are both even,
\end{enumerate}
$$\underset{r \rightarrow 0^+}{\lim} \Big( \frac{d}{dr} \Big)^{n-2} r^{n-2} (M^r_+ f) (x_0) + (-1)^\frac{n}{2} \underset{r \rightarrow 0^+}{\lim} \Big( \frac{d}{dr} \Big)^{n-2} r^{n-2} (M^r_+ f) (x_0) = c f(x_0),$$
where $L_r^+$ (respectively $L_r^-$) is the radial part of the Laplace-Beltrami operator in polar geodesic coordinates on $D_+$ (respectively $D_-$), $P_+$ and $P_-$ are polynomials and, in each equation, $c$ is a non-zero constant independent of $f$.
\end{theorem}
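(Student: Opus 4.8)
The plan is to transport the flat Riesz potentials of Theorem~\ref{flatRieszpot} and the attendant limit formulas to $M$ through the exponential mapping $\mathrm{Exp}_{x_0}$, following the pattern of Helgason's proof of Theorem~\ref{limitformevendim}. Since $M$ is semisimple of rank one, it is quasi-isotropic (Proposition~\ref{propisotropicandrank}) and has constant sectional curvature $\pm 1$ along non-null directions; using Lemma~\ref{ExponentDiff} to compute the Jacobian of $\mathrm{Exp}_{x_0}$ one finds, exactly as in Proposition~\ref{propDAlembertOrbitalint}, that in polar geodesic coordinates the Riemannian density is $\sinh^{n-1}r\,dr\,d\sigma$ on $D^{+}_{x_0}$ and $\sin^{n-1}r\,dr\,d\sigma$ on $D^{-}_{x_0}$, and that the Laplace--Beltrami operator $L$ of $M$ splits as $L = L_r^{\pm} + (\text{angular part})$ with $L_r^{+} = \partial_r^2 + (n-1)\coth r\,\partial_r$ and $L_r^{-} = \partial_r^2 + (n-1)\cot r\,\partial_r$. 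Since $M^r_\pm f$ is $K$-invariant and $L$ commutes with the $K$-action, this gives the Darboux equations $L(M^r_\pm f)(x_0) = L_r^{\pm}(M^r_\pm f)(x_0) = M^r_\pm(Lf)(x_0)$.

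I would then define on $M$ the Riesz potentials
$$(I^\lambda_\pm f)(x_0) := \frac{1}{H_n(\lambda)}\int_{D^\pm_{x_0}} f(y)\,\omega_\pm\!\big(\sqrt{|\hat g_{x_0}(Y,Y)|}\,\big)^{\lambda-n}\,dm(y), \qquad y=\mathrm{Exp}_{x_0}(Y),$$
with $\omega_+=\sinh$ and $\omega_-=\sin$, together with the combination $I^\lambda_0 f$ of Theorem~\ref{flatRieszpot}, a priori for $\mathrm{Re}(\lambda)>n$. Passing to polar geodesic coordinates with the density just computed yields the analogue of formula \ref{Lorentzlink},
$$(I^\lambda_\pm f)(x_0) = \frac{1}{H_n(\lambda)}\int_0^\infty (M^r_\pm f)(x_0)\,\omega_\pm(r)^{\lambda-1}\,dr,$$
and combining the Darboux equation with integration by parts gives, as in Proposition~\ref{Lorentzprop}, that $L(I^\lambda_\pm f)=I^\lambda_\pm(Lf)$ and a functional equation $L(I^\lambda_\pm f)= \beta_\pm(\lambda)\,(I^\lambda_\pm f)\pm(I^{\lambda-2}_\pm f)$ with $\beta_\pm$ a quadratic polynomial produced by the curvature term of $L_r^\pm$.

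The holomorphic continuation of $I^\lambda_\pm f$ and $I^\lambda_0 f$ in $\lambda$, and their values at $\lambda=0$, I would obtain by localizing near $r=0$: by the Remark following Proposition~\ref{propisotropicandrank}, $\mathrm{Exp}_{x_0}$ is a diffeomorphism near the origin, and by Lemma~\ref{ExponentDiff} it is tangent to the identity there, so $(M^r_\pm f)(x_0)$ differs from the flat pseudo-spherical integral $M_\pm(f\circ\mathrm{Exp}_{x_0})(r)$ by a term that is $O(r^2)$ relative to it, while $\omega_\pm(r)=r+O(r^3)$. Hence the singular part of $(I^\lambda_\pm f)(x_0)$ at the poles of $H_n(\lambda)$ coincides with that of the flat Riesz potential of $f\circ\mathrm{Exp}_{x_0}$, so these potentials extend to entire functions of $\lambda$ and, by Theorem~\ref{flatRieszpot}, $(I^0_+ f)(x_0)=2\sin\!\big(p\tfrac{\pi}{2}\big)f(x_0)$ and $(I^0_- f)(x_0)=2\sin\!\big(q\tfrac{\pi}{2}\big)f(x_0)$, with the corresponding value for $I^0_0$ when $p$ and $q$ are both even.

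Finally I would extract the limit formulas by the Riemann--Liouville argument of Theorem~\ref{limitformevendim}. Writing $F(r):=\omega_\pm(r)^{n-2}(M^r_\pm f)(x_0)$ and $\mu:=\lambda-n+2$, the integral above becomes $\tfrac{1}{H_n(\lambda)}\int_0^\infty F(r)\,\omega_\pm(r)^{\mu-1}\,dr$, whose meromorphic continuation has residue $\lim_{r\to 0^+}F(r)$ at $\mu=0$ since $\omega_\pm(r)=r+O(r^3)$; paired with the $\Gamma$-pole in $H_n$ this gives $(I^{n-2}_\pm g)(x_0)=c\,\lim_{r\to 0^+}r^{n-2}(M^r_\pm g)(x_0)$ for every $g\in\mathcal{C}^\infty_c(M)$. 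When $p$ and $q$ are both odd ($n$ even), iterating the functional equation $\tfrac{n-2}{2}$ times yields a polynomial $P_\pm$ with $I^{n-2}_\pm(P_\pm(L)f)(x_0)=c\,(I^0_\pm f)(x_0)=c'\,f(x_0)$; substituting $g=P_\pm(L)f$ and using the Darboux equation to rewrite $M^r_\pm(P_\pm(L)f)=P_\pm(L_r^\pm)(M^r_\pm f)$ gives the first two formulas. In the remaining parity cases the flat limit formulas involve only the operator $\big(\tfrac{d}{dr}\big)^{n-2}r^{n-2}$, which is insensitive to the $O(r^2)$ distortion of $\mathrm{Exp}_{x_0}$, so those formulas carry over from the preceding (flat) theorem. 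The main obstacle is the third step: proving the entire continuation and pinning down the constants and polynomials uniformly over the four parity cases, in particular the odd-dimensional ones, where several of the numbers $\sin(p\tfrac{\pi}{2})$, $\sin(q\tfrac{\pi}{2})$ vanish and the argument must be routed through $I^\lambda_0$.
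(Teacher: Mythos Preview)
The paper does not supply its own proof of this theorem: it is quoted as a result of Orloff \cite{Orl87} and only the surrounding flat theorem (Theorem~\ref{flatRieszpot}) and the Lorentzian constant-curvature case (Theorem~\ref{limitformevendim}) are argued in the text. So there is nothing in the paper to compare your argument against directly.

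That said, your proposal has a genuine gap. You assert that a rank-one semisimple pseudo-Riemannian symmetric space ``has constant sectional curvature $\pm 1$ along non-null directions'' and then use this to claim that the Jacobian of $\mathrm{Exp}_{x_0}$ in polar geodesic coordinates is $\sinh^{n-1}r$ on $D^+_{x_0}$ and $\sin^{n-1}r$ on $D^-_{x_0}$, and that the radial parts are $L_r^+=\partial_r^2+(n-1)\coth r\,\partial_r$ and $L_r^-=\partial_r^2+(n-1)\cot r\,\partial_r$. This is false in general: rank one does not imply constant curvature (the paper itself cites $\mathrm{SL}(n,\R)/\mathrm{GL}(n-1,\R)$ as a rank-one example that is not even isotropic). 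By Lemma~\ref{ExponentDiff} the Jacobian of $\mathrm{Exp}_{x_0}$ is the determinant of $\sum_k \mathrm{ad}(X)^{2k}/(2k+1)!$ on $\mathfrak{p}$, which for a general rank-one space is a product over the restricted roots with their multiplicities, typically of the form $\sinh^{m_\alpha}(\alpha r)\,\sinh^{m_{2\alpha}}(2\alpha r)$ (and analogous $\sin$ factors on $D^-$), not a single power of $\sinh r$. Consequently $L_r^\pm$ has a more complicated first-order coefficient, the functional equation you write for $I^\lambda_\pm$ picks up extra terms, and the polynomials $P_\pm$ are not produced by the simple iteration you describe. Your argument, as written, only recovers the constant-curvature case already covered by Theorem~\ref{limitformevendim}; to reach Orloff's theorem you must carry out the Riesz-potential and Darboux analysis with the genuine root-multiplicity density, which is precisely the content of \cite{Orl87}.
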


\subsection{Summarizing result in the Lorentzian case}

For Lorentzian symmetric spaces of constant sectional curvature, we collect the limit formulas given by S. Helgason in the even-dimensional case and we use the arguments of J. Orloff to get the limit formulas in odd-dimensional case. This gives the following summarizing result.

\begin{theorem}[S. Helgason, 1959 \cite{Hel59} - J. Orloff, 1987 \cite{Orl87}]
Let $(M, s, \hat{g})$ be either the flat Lorentzian vector space $\R^{1,n-1}$ or one of the two model spaces $Q_{(+1)}$ and $Q_{-1}$ introduced in theorem \ref{theoremLorentz}.
Let $\kappa$ be the constant sectional curvature of $M$. If $n = \dim (M)$, for any $f \in \mathcal{C}_c^\infty (M)$,
\begin{enumerate}
\item If $n > 2$ is even,
$$f (x) = c \underset{r \rightarrow 0^+}{\lim} \; r^{n-2} P^\kappa_+ (\square) (M^r_+ f) (x)$$
and
$$f (x) = c \underset{r \rightarrow 0^+}{\lim} \; r^{n-2} P^\kappa_- (\square) (M^r_- f) (x),$$
\item If $n = 2$,
$$f(x) = -\frac{1}{2}\; \underset{r \rightarrow 0^+}{\lim} \; r \frac{d}{dr} (M^r_+ f) (x)$$
and
$$f(x) = -\frac{1}{2}\; \underset{r \rightarrow 0^+}{\lim} \; r \frac{d}{dr} (M^r_- f) (x),$$
\item If $n$ is odd,
$$f(x) = c \underset{r \rightarrow 0^+}{\lim} \Big( \frac{d}{dr} \Big)^{n-2} \big(r^{n-2} (M^r_+ f) (x)\big),$$
\end{enumerate}
where $P^\kappa_+$ and $P^\kappa_-$ are polynomials and, in each equation, $c$ is non-zero constant independent of $f$.
\end{theorem}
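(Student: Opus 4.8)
The plan is to read this statement as a synthesis of the even- and odd-dimensional results already established, specialized to the Lorentzian signature and spelled out uniformly across the three constant-curvature models $\R^{1,n-1}$, $Q_{(+1)}$ and $Q_{(-1)}$. The first step is to record that a Lorentzian metric has signature $(p,q)=(1,n-1)$, so that $p=1$ is always odd while the parity of $q=n-1$ is opposite to that of $n$. Consequently only two of the four parities in J. Orloff's framework can occur: $p,q$ both odd when $n$ is even, and $p$ odd, $q$ even when $n$ is odd. The three displayed cases of the theorem are then exactly the specializations $n>2$ even, $n$ odd, and the degenerate value $n=2$.

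For the even-dimensional case I would invoke Theorem \ref{limitformevendim} directly on $Q_{(-1)}$ and observe that its proof transfers verbatim to $Q_{(+1)}$ and to the flat space. The machinery is identical in all three models: the Darboux-type identity of Proposition \ref{propDAlembertOrbitalint}, the Riesz-potential recurrence of Proposition \ref{Lorentzprop}(ii), and the holomorphic continuation giving $I^0_+f=f$ in Proposition \ref{Lorentzprop}(iii). The only data that change with the space are the coefficients appearing in the recurrence relating $\square (I^\lambda_\pm f)$ to $I^{\lambda-2}_\pm f$; these coefficients are governed by the curvature $\kappa$, and telescoping the recurrence from $\lambda=n-2$ down to $\lambda=0$ produces the polynomial $P^\kappa_\pm$ with $P^\kappa_\pm(\square) I^{n-2}_\pm f = f$. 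For $\kappa=0$ the recurrence degenerates and $P^0_\pm$ is, up to sign, the monomial $\square^{(n-2)/2}$, while for $\kappa\neq 0$ it is a genuine polynomial of degree $\tfrac{n-2}{2}$. Finally, evaluating the Riemann--Liouville integral obtained from formula \ref{Lorentzlink} at the boundary parameter $\mu=0$ rewrites $I^{n-2}_\pm f$ as $c\,\lim_{r\to 0^+} r^{n-2}(M^r_\pm f)$, which is the claimed limit.

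For the odd-dimensional case, where $p=1$ is odd and $q=n-1$ is even, I would transfer Orloff's flat-space formula --- the $(d/dr)^{n-2} r^{n-2}(M_+f)$ limit coming from Theorem \ref{flatRieszpot} --- to each model space through the exponential mapping, exactly as in the lifting step quoted above. Near the base point the orbital integrals differ from their pseudo-Euclidean counterparts only through the volume distortion $\sinh^{n-1}$, $\sin^{n-1}$ or $r^{n-1}$, and this distortion is absorbed into the $r^{n-2}$ normalization, so the same limit formula with $M^r_+$ holds on $\R^{1,n-1}$, $Q_{(+1)}$ and $Q_{(-1)}$.

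I expect the genuinely delicate point, and hence the main obstacle, to be the value $n=2$, which must be isolated because the iteration exponent $n-2$ vanishes and the factors $\Gamma((n-2)/2)$ entering $H_n(\lambda)$ and Theorem \ref{limitformevendim} acquire poles there. Here the signature is $(1,1)$, the light cone degenerates, and no Riesz-potential iteration is available; instead I would compute the short-distance behavior of $(M^r_\pm f)(x)$ directly, expanding the orbital integral to first order in $r$ and reading off $f(x)$ from the coefficient, which yields precisely the factor $-\tfrac12\, r\, d/dr$. Apart from this edge case and the routine bookkeeping of the nonzero constants $c$ and the curvature-dependent polynomials $P^\kappa_\pm$, the argument is pure assembly of the even and odd formulas already proved, uniformized over the three constant-curvature models.
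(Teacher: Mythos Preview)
Your proposal is correct and takes essentially the same approach as the paper: the theorem is presented there as a summarizing statement with no formal proof, merely the remark that one ``collects the limit formulas given by S. Helgason in the even-dimensional case and uses the arguments of J. Orloff to get the limit formulas in odd-dimensional case.'' Your parity analysis of the Lorentzian signature $(1,n-1)$, the invocation of Theorem~\ref{limitformevendim} for even $n>2$, the lift of Orloff's flat formula through the exponential map for odd $n$, and the isolation of $n=2$ as a degenerate edge case all match what the paper leaves implicit; if anything, you have written out more detail than the paper itself provides.
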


In order to prove theorem \ref{maintheorem}, taking theorem \ref{theoremindecomplorentz} into account, it remains to look at orbital integrals on solvable indecomposable Lorentzian symmetric spaces.


\section{Solvable indecomposable Lorentzian symmetric spaces}\label{sectionsolvable}

Let $(\mathfrak{g}, \sigma, B)$ be one of the triples described in theorem \ref{solvabletriple} for a particular choice of parameters $\lambda_1, ..., \lambda_{n-2} \in \R_0$.
Up to isomorphism, the connected, simply connected Lie group associated to $\mathfrak{g}$ is the smooth manifold
$$G := \R^{2n-2}$$
endowed with the group law
$$(t, p, q, r) \ast (t', p',q', r') := (t + t', (p,q,r) \cdot \varphi_t (p',q',r') )$$
for $t,t',r,r' \in \R$, $p,p',q,q' \in \R^{n-2}$, where $\cdot$ is the \emph{weighted} Heisenberg product given by
$$(p, q, r) \cdot (p', q', r') := (p+p', q+q', r+r' + \frac{1}{2} \sum_{i=1}^{n-2} \lambda_i (q_i p'_i - q'_i p_i))$$
and
\begin{align*}
\varphi_t (p',q',r') := &\Big(\ch (t \sqrt{\vert \lambda_i \vert}) p'_i - \frac{1}{\sqrt{\vert \lambda_i \vert}} \sh (t \sqrt{\vert \lambda_i \vert}) q'_i,\\
& \frac{\lambda_i}{\sqrt{\vert \lambda_i \vert}} \sh (t \sqrt{\vert \lambda_i \vert}) p'_i + \ch (t \sqrt{\vert \lambda_i \vert}) q'_i {\;}_{(1 \leq i \leq n-2)}, r' \Big).
\end{align*}
In the above formula and in the following, the notation $\sh (t \sqrt{\vert \lambda_i \vert})$ (respectively $\ch (t \sqrt{\vert \lambda_i \vert})$) means $\sin (t \sqrt{\lambda_i})$ (respectively $\cos (t \sqrt{\lambda_i})$) when $\lambda_i > 0$ and $\sinh (t \sqrt{-\lambda_i})$ (respectively $\cosh (t \sqrt{-\lambda_i})$) when $\lambda_i < 0$.

Therefore, $G$ is a semi-direct product of $\R$ and a \emph{weighted} Heisenberg group.
The connected Lie subgroup of $G$ whose Lie algebra is $\mathfrak{k}$, namely the isotropy group, is given by 
$$K := \lbrace (t, p, q, r) \in G \mid t = q_1 = ... = q_{n-2} = r = 0 \rbrace.$$
Let us define $\pi : G \rightarrow \R^n$ such that
\begin{align*}
\pi (t,p,q,r) &:= \Big(t, \frac{\lambda_i}{\sqrt{\vert \lambda_i \vert}} \sh (t \sqrt{\vert \lambda_i \vert}) p_i - \ch (t \sqrt{\vert \lambda_i \vert}) q_i {\;}_{(1 \leq i \leq n-2)},\\
&\quad \quad r + \sum_{i=1}^{n-2} \frac{\lambda_i}{2} \Big( \frac{\lambda_i}{\sqrt{\vert \lambda_i \vert}} \sh (t \sqrt{\vert \lambda_i \vert}) p_i - \ch (t \sqrt{\vert \lambda_i \vert}) q_i\Big)\\
&\qquad \qquad \qquad \Big( p_i \ch (t \sqrt{\vert \lambda_i \vert}) + \frac{q_i}{\sqrt{\vert \lambda_i \vert}} \sh (t \sqrt{\vert \lambda_i \vert})\Big) \Big)
\end{align*}
which factors the natural projection from $G$ onto $G/K$ and yields an isomorphism between $M := G/K$ and $\R^n$. We use this identification in the sequel.

The unique involutive automorphism of $G$ whose differential is $\sigma$ writes
$$\tilde{\sigma} : G \rightarrow G : (t,p,q,r) \mapsto (-t, p, -q, -r).$$
The associated symmetric structure on $M$ is given by
$$s_{\pi (g)} (\pi (g')) := \pi (g \sigma (g^{-1} g')), \quad \text{for } g, g' \in G.$$
If we denote by $(t, x_1, ..., x_{n-2}, v)$ the global coordinates on $M$ identified with $\R^n$ as above, the metric on $M$ associated to the bilinear form $B$ is given by
$$\hat{g} := \Big( \sum_{i=1}^{n-2} \lambda_i x_i^2 \Big) dt \otimes dt + dt\otimes dv + dv \otimes dt - \sum_{i=1}^{n-2} dx_i \otimes dx_i.$$

As a conclusion, up to isometry, the connected, simply connected, indecomposable solvable Lorentzian symmetric space associated to the triple $(\mathfrak{g}, \sigma, B)$ is $(M, s, \hat{g})$.
It is called a \emph{Cahen-Wallach space}. In dimension $4$, it is an example of pp-waves in the Brinkmann class which are idealized gravitational wave models in general relativity\cite{Bri25, EhlKun62, CosFloHer16}.


\subsection{The exponential mapping}

\begin{proposition}
Let $(M,s,\hat{g})$ be the Cahen-Wallach space corresponding to parameters $\lambda_1, ..., \lambda_{n-2} \in \R_0$.
The exponential mapping at any point $y = (t_0, x_0, v_0) \in M$ is given by
\begin{align*}
\mathrm{Exp}_{y} &(\bar{b}) = \Big( b_0 + t_0, \frac{\sh (b_0 \sqrt{\vert \lambda_i \vert})}{b_0 \sqrt{\vert \lambda_i \vert}} b_i + \ch (b_0 \sqrt{\vert \lambda_i \vert}) x_{0,i} {\;}_{(1 \leq i \leq n-2)},\\
&\frac{1}{2b_0} \Big(2 b_0 b' - \sum_{i=1}^{n-2} b_i^2 + b_0^2 \sum_{i=1}^{n-2} \lambda_i x_{0,i}^2 \Big) + \sum_{i=1}^{n-2} \frac{b_i^2 - b_0^2 \lambda_i x_{0,i}^2}{4 b_0^2 \sqrt{\vert \lambda_i \vert}} \sh (2b_0 \sqrt{\vert \lambda_i \vert})\\
&+ \sum_{i=1}^{n-2} \frac{b_i x_{0,i}}{2 b_0} \ch (2b_0 \sqrt{\vert \lambda_i \vert}) + \Big( v_0 - \frac{1}{2b_0} \sum_{i=1}^{n-2} b_i x_{0,i} \Big) \Big)
\end{align*}
for any $\bar{b} = (b_0, b_1, ..., b_{n-2}, b') \in T_y M \simeq \R^n$ such that $b_0 \neq 0$ and
$$\mathrm{Exp}_y (0, b_1, ..., b_{n-2},b') = (t_0, b_1 + x_{0,1}, ..., b_{n-2} + x_{0,n-2}, b' + v_0).$$
\end{proposition}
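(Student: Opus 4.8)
The plan is to compute the exponential mapping at an arbitrary point $y$ by using the standard formula for symmetric spaces, $\mathrm{Exp}_{x_0}(X) = \exp(\bar X).x_0$, transported to the base-point of interest, together with the explicit group law on $G = \R^{2n-2}$ given above. Concretely, I would first treat the base-point $y_0 := \pi(e) = (0,0,0)$ (the origin of $M \simeq \R^n$): for a tangent vector $\bar b \in \mathfrak{p}$, identified with $(b_0,b_1,\dots,b_{n-2},b') \in \R^n$ via $Z,U,W_1,\dots,W_{n-2}$, I would solve the ODE $\frac{d}{ds}\exp(s\bar b) = \bar b \cdot \exp(s\bar b)$ in $G$ — equivalently, use the bracket relations of Theorem \ref{solvabletriple} to integrate. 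Since $\bar b = b_0 U + \sum_i b_i W_i + b' Z$ and $[\mathfrak p,\mathfrak p] = \mathfrak k$, the one-parameter subgroup $\exp(s\bar b)$ is governed by the $2\times 2$ blocks $\mathrm{ad}(U)$ acts on each $\R W_i \oplus \R K_i$ with eigenvalues producing $\ch, \sh$ of $s b_0 \sqrt{|\lambda_i|}$, exactly as in the definition of $\varphi_t$. The $Z$-component accumulates the quadratic correction coming from $[W_j,K_i] = \lambda_i\delta_{ij}Z$, which is where all the $\sh(2b_0\sqrt{|\lambda_i|})$, $\ch(2b_0\sqrt{|\lambda_i|})$ and the $b_i^2/(4b_0^2\sqrt{|\lambda_i|})$ terms are born.

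Next I would push the computation from $y_0$ to a general point $y = (t_0, x_0, v_0)$. The transvection group acts transitively, so pick $g \in G$ with $\pi(g) = y$; then by naturality of $\mathrm{Exp}$ under isometries, $\mathrm{Exp}_y(\tau(g)_{*y_0}\bar b) = \tau(g)(\mathrm{Exp}_{y_0}(\bar b))$. The remaining bookkeeping is: (a) choose a convenient $g$ projecting to $y$ — e.g. $g = (t_0, x_0\text{-related}, 0, v_0)$ after inverting the formula for $\pi$; (b) identify $\tau(g)_{*y_0}$ as a linear map on $T_{y_0}M \simeq \R^n$ so as to rewrite the generic tangent vector at $y$ in the form $(b_0,\dots,b_{n-2},b')$; (c) apply the group law $\ast$ and the projection $\pi$ to the product $g \ast \exp(\bar b)$ and read off the coordinates. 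The $\ch(b_0\sqrt{|\lambda_i|})x_{0,i}$ terms in the $x_i$-components and the cross terms $b_i x_{0,i}$, $\lambda_i x_{0,i}^2$ in the $v$-component are precisely the footprint of this conjugation by $g$.

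The degenerate case $b_0 = 0$ is separate and easy: when $\bar b$ has no $U$-component, $\bar b = \sum_i b_i W_i + b' Z$ lies in an abelian subalgebra (since $[W_i,W_j]=0=[W_i,Z]$), so $\exp(\bar b)$ is just the translation by $(b_1,\dots,b_{n-2},b')$ in the flat directions, giving the stated affine formula $\mathrm{Exp}_y(0,b_1,\dots,b_{n-2},b') = (t_0, b_1+x_{0,1},\dots, b' + v_0)$. This also serves as a consistency check: letting $b_0 \to 0$ in the main formula, $\frac{\sh(b_0\sqrt{|\lambda_i|})}{b_0\sqrt{|\lambda_i|}} \to 1$, $\ch(b_0\sqrt{|\lambda_i|}) \to 1$, and the $v$-component should collapse to $b' + v_0$ after the $\sh(2b_0\cdot)$ and $\ch(2b_0\cdot)$ expansions cancel the $\frac{1}{2b_0}$ singularities — a worthwhile Taylor-expansion check but not an obstacle.

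The main obstacle is bookkeeping, not conceptual: the $v$-component (the $Z$-direction) mixes the quadratic Heisenberg cocycle in the group law, the $\varphi_t$-twist, and the conjugation by $g$, so keeping track of all the $\frac{1}{2b_0}$, $\frac{1}{4b_0^2}$ prefactors and the $\sh/\ch$ at argument $b_0\sqrt{|\lambda_i|}$ versus $2b_0\sqrt{|\lambda_i|}$ requires care. I would organize it by computing $\mathrm{Exp}_{y_0}$ fully first (so the $x_{0,i} = 0$, $v_0 = 0$, $t_0=0$ specialization of the claimed formula), verifying it against Lemma \ref{ExponentDiff} by differentiating at $\bar b = 0$, and only then conjugating — this isolates the genuinely new terms at each stage and makes the final identification of coefficients essentially mechanical.
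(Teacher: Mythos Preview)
Your approach is correct but differs from the paper's. The paper bypasses the group entirely: since the metric $\hat g$ is given explicitly in the global coordinates $(t,x_1,\dots,x_{n-2},v)$ on $M\simeq\R^n$, it simply writes down the geodesic equations of the Levi--Civita connection,
\[
\ddot t = 0,\qquad \ddot x_i + \lambda_i x_i\,\dot t^{\,2} = 0,\qquad \ddot v + 2\sum_i \lambda_i x_i\,\dot t\,\dot x_i = 0,
\]
and solves them directly with initial data $\gamma(0)=(t_0,x_0,v_0)$, $\dot\gamma(0)=(b_0,b_1,\dots,b_{n-2},b')$. The system is triangular: $t(s)=t_0+b_0 s$ is linear, each $x_i$ then satisfies a (hyperbolic or trigonometric) oscillator equation with frequency $b_0\sqrt{|\lambda_i|}$, and $v$ is recovered by a single quadrature. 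This handles an arbitrary base-point $y$ in one stroke, with no reference to the group law, the projection $\pi$, or any transport by isometries.

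Your Lie-theoretic route---compute $\exp(\bar b)$ in $G$, project by $\pi$ to get $\mathrm{Exp}_{y_0}$, then conjugate by a $g$ with $\pi(g)=y$---is perfectly valid and is the ``symmetric space'' way of thinking; it also makes the appearance of $\varphi_t$ and the Heisenberg cocycle transparent. Its cost is exactly the bookkeeping you flag: inverting $\pi$ to choose $g$, tracking $\tau(g)_{*y_0}$, and then composing with the group product. The paper's ODE approach avoids all of that because the geodesic equations are already written at the general point, so no transport step is needed; in this particular model it is the shorter computation.
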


\begin{proof}
The geodesic equations corresponding to the Levi-Civita connection on $M$ are given by
$$\begin{cases}
\frac{d^2 t}{ds^2} (s) = 0,\\
\frac{d^2 v}{ds^2} (s) +2 \sum_{i=1}^{n-2} \lambda_i x_i (s) \frac{dt}{ds} (s) \frac{dx_i}{ds} (s) = 0,\\
\frac{d^2 x_i}{ds^2} (s) + \lambda_i x_i (s) \big( \frac{dt}{ds} (s) \big)^2 = 0
\end{cases}.$$
The solutions $\gamma (s) := (t(s), x_1 (s), ..., x_{n-2} (s), v(s))$ of these equations with initial data
\begin{align*}
&\gamma (0) = (t(0), x_1 (0), ..., x_{n-2} (0), v (0)) = (t_0, x_{0,1}, ..., x_{0,n-2}, v_0),\\
&\dot{\gamma} (0) = \Big( \frac{dt}{ds} (0), \frac{dx_1}{ds} (0), ..., \frac{dx_{n-2}}{ds} (0), \frac{dv}{ds} (0) \Big) = (b_0, b_1, ..., b_{n-2}, b')
\end{align*}
are given by $\mathrm{Exp}_{\gamma (0)} (s \dot{\gamma} (0))$ as above.
\end{proof}

\begin{corollary}
Let $(M,s,\hat{g})$ be the Cahen-Wallach space corresponding to parameters $\lambda_1, ..., \lambda_{n-2} \in \R_0$.
For any point $y = (t_0, x_0, v_0) \in M$, the exponential mapping at $y$ is a diffeomorphism from the open subset
$$\lbrace (b_0, b_1, ..., b_{n-2}, b') \in T_y M \mid \forall i \in \lbrace 1, ..., n-2 \rbrace, \; \lambda_i > 0 \Rightarrow b_0 \sqrt{\lambda_i} \notin \pi \mathbb{Z}_0 \rbrace$$
of $T_y M$ to its image given by
$$\lbrace (t, x, v) \in M \mid \forall i \in \lbrace 1, ..., n-2 \rbrace, \; \lambda_i > 0 \Rightarrow (t-t_0) \sqrt{\lambda_i} \notin \pi \mathbb{Z}_0 \rbrace.$$
The determinant of the differential of $\mathrm{Exp}_y$ at any vector $(b_0, b_1, ..., b_{n-2}, b') \in T_y M$ is equal to
$$\prod_{i=1}^{n-2} \Big\vert \frac{\sh (b_0 \sqrt{\vert \lambda_i \vert})}{b_0 \sqrt{\vert \lambda_i \vert}}\Big\vert \; \text{if } b_0 \neq 0, \text{ and } 1 \; \text{otherwise.}$$
\end{corollary}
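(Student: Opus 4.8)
The plan is to read everything off the explicit coordinate formula for $\mathrm{Exp}_y$ established in the previous proposition, which is already almost triangular. First I would write down the Jacobian matrix of $\mathrm{Exp}_y$ with respect to the coordinates $(b_0, b_1, \ldots, b_{n-2}, b')$ on $T_y M$ and $(t, x_1, \ldots, x_{n-2}, v)$ on $M$. The key observation is the triangular structure: the $t$-component of $\mathrm{Exp}_y (\bar b)$ is $b_0 + t_0$, so the first row of the Jacobian is $(1, 0, \ldots, 0)$; each $x_i$-component depends only on $b_0$ and $b_i$, with $\partial x_i / \partial b_i = \sh (b_0 \sqrt{\vert \lambda_i \vert}) / (b_0 \sqrt{\vert \lambda_i \vert})$; and the $v$-component has the form $b'$ plus a function of $b_0$ and the $b_i$ alone, so $\partial v / \partial b' = 1$ while the remaining entries $\partial v / \partial b_j$ are irrelevant. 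Expanding the determinant first along the first row and then along the $b'$-column leaves a diagonal matrix with diagonal entries $\sh (b_0 \sqrt{\vert \lambda_i \vert}) / (b_0 \sqrt{\vert \lambda_i \vert})$, so that for $b_0 \neq 0$
$$\det \big( \mathrm{Exp}_y \big)_{\ast \bar b} = \prod_{i=1}^{n-2} \frac{\sh (b_0 \sqrt{\vert \lambda_i \vert})}{b_0 \sqrt{\vert \lambda_i \vert}},$$
whose modulus is the asserted expression; for $b_0 = 0$ one gets the identity on the translation $(b_1, \ldots, b_{n-2}, b') \mapsto (x_{0,1} + b_1, \ldots, v_0 + b')$ (with $t$ fixed at $t_0$), so a direct check, or passing to the limit $b_0 \to 0$ using smoothness of $\mathrm{Exp}_y$, gives determinant $1$.

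As a cross-check I would note that the same answer follows from Lemma \ref{ExponentDiff}: since the transvection group acts by isometries and, in the chosen coordinates, $\vert \det (\hat g_{ab}) \vert = 1$ on $M$ and $\vert \det (B\vert_{\mathfrak{p}}) \vert = 1$ on $\mathfrak{p} \simeq T_{x_0} M$, the factors coming from $\tau (\exp (X))$ contribute $\pm 1$, and it remains to compute $\det \sum_{k \geq 0} \mathrm{ad}(X)^{2k}\vert_{\mathfrak{p}} / (2k+1)!$; from the bracket relations of Theorem \ref{solvabletriple} one finds that the eigenvalues of $\mathrm{ad}(X)^2\vert_{\mathfrak{p}}$ are $0$ (with multiplicity two, coming from $Z$ and from $U$) and $-b_0^2 \lambda_1, \ldots, -b_0^2 \lambda_{n-2}$, so applying the entire function $\mu \mapsto \sum_k \mu^k/(2k+1)!$ to each eigenvalue reproduces the product above.

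For the diffeomorphism assertion, I would first locate where $\mathrm{Exp}_y$ is a local diffeomorphism: the Jacobian vanishes exactly when $b_0 \neq 0$ and $\sh (b_0 \sqrt{\vert \lambda_i \vert}) = 0$ for some $i$, and since $\sinh$ vanishes only at $0$ this forces $\lambda_i > 0$ and $b_0 \sqrt{\lambda_i} \in \pi \mathbb{Z}_0$. Hence $\mathrm{Exp}_y$ is a local diffeomorphism precisely on the open set stated (which contains the hyperplane $\{ b_0 = 0 \}$). Global injectivity on that set is immediate: if $\mathrm{Exp}_y (\bar b) = \mathrm{Exp}_y (\tilde b)$, comparing $t$-coordinates gives $b_0 = \tilde b_0$; then comparing $x_i$-coordinates, the coefficient $\sh (b_0 \sqrt{\vert \lambda_i \vert}) / (b_0 \sqrt{\vert \lambda_i \vert})$ of $b_i$ being nonzero on this set forces $b_i = \tilde b_i$; finally the $v$-coordinate forces $b' = \tilde b'$. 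An injective local diffeomorphism is a diffeomorphism onto its (open) image, so $\mathrm{Exp}_y$ restricts to a diffeomorphism of the stated open subset onto its image.

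It remains to identify the image. Any point $\mathrm{Exp}_y (\bar b)$ has first coordinate $t = b_0 + t_0$, so $(t - t_0) \sqrt{\lambda_i} = b_0 \sqrt{\lambda_i} \notin \pi \mathbb{Z}_0$ whenever $\lambda_i > 0$, which gives the inclusion of the image in the stated set; conversely, given a point $(t, x, v)$ of that set one recovers a preimage by solving the coordinate equations in order, $b_0 := t - t_0$, then the $x_i$-equations for $b_i$ (possible exactly because of the constraint on $t$), then the $v$-equation for $b'$. I do not anticipate a genuine obstacle here: the computation is essentially bookkeeping once the triangular structure of $\mathrm{Exp}_y$ is exploited, and the only mild points are the removable singularity at $b_0 = 0$ in the Jacobian formula and the passage from local to global injectivity, the latter being trivial because the first coordinate already determines $b_0$.
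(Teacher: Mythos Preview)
Your proposal is correct and follows exactly the approach the paper implicitly intends: the corollary is stated without proof in the paper, as an immediate consequence of the explicit coordinate formula for $\mathrm{Exp}_y$ in the preceding proposition, and your triangular Jacobian argument together with the coordinate-by-coordinate injectivity/surjectivity check is precisely how one fills in those details. Your cross-check via Lemma~\ref{ExponentDiff} is a nice independent verification not present in the paper; note only that $\mathrm{ad}(X)^2\vert_{\mathfrak p}$ need not be diagonalizable (it is merely triangular in a suitable basis), but since the determinant of an entire function applied to a matrix equals the product over generalized eigenvalues, your conclusion stands.
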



\subsection{The orbits of the isotropy group and the orbital integrals}

Let us fix $(0, ..., 0)$ as base-point of $M$. Then we get the following identification $T_{0} M \simeq \mathfrak{p}$
$$\begin{cases}
T_{0} M \simeq \R^n &\rightarrow \mathfrak{p}\\
(b_0, b_1, ..., b_{n-2}, b') &\mapsto b_0 U + \sum_{i=1}^{n-2} b_i W_i + b' Z
\end{cases}.$$

\begin{proposition}\label{proporbsolvable}
The orbits under the adjoint action of the isotropy group $K$ in $\mathfrak{p} \simeq T_{0} M$ are
\begin{align*}
&\lbrace (b_0, b_1, ..., b_{n-2}, b') \in T_{0} M \mid 2 b_0 b' - \sum_{i=1}^{n-2} b_i^2 = \alpha, \; b_0 = \beta \rbrace =: \Sigma_{\alpha, \beta},\\
&\lbrace (0, \gamma_1, ..., \gamma_{n-2}, b') \mid b' \in \R \rbrace,\\
&\lbrace (0, 0, ..., 0, \beta') \rbrace
\end{align*}
for $\beta \in \R_0, \alpha \in \R, (\gamma_1, ..., \gamma_{n-2}) \in \R^{n-2} \setminus \lbrace 0 \rbrace$ and $\beta' \in \R$.
\end{proposition}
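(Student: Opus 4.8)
The plan is to compute the action of $\mathrm{Ad}(K)$ on $\mathfrak{p}$ completely explicitly and then read off the orbits. Since $[K_i,K_j]=0$ in Theorem~\ref{solvabletriple}, the isotropy subalgebra $\mathfrak{k}$ is abelian; as $K$ is connected, every element of $K$ is therefore of the form $\exp(\kappa)$ with $\kappa=\sum_{i=1}^{n-2}c_iK_i$, $c_i\in\R$ (this is also visible directly from the model, where $K\cong(\R^{n-2},+)$). Because $[\mathfrak{k},\mathfrak{p}]\subset\mathfrak{p}$, the operator $\mathrm{Ad}(\exp\kappa)$ preserves $\mathfrak{p}$ and restricts there to $\exp(\mathrm{ad}(\kappa)|_{\mathfrak{p}})$, so it suffices to understand the single nilpotent operator $\mathrm{ad}(\kappa)|_{\mathfrak{p}}$.

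From the bracket relations $[K_i,Z]=0$, $[K_i,U]=-\lambda_iW_i$ and $[K_i,W_j]=-\lambda_i\delta_{ij}Z$ one gets, for $\kappa=\sum_ic_iK_i$, that $\mathrm{ad}(\kappa)$ sends $U\mapsto-\sum_ic_i\lambda_iW_i$, $W_i\mapsto-c_i\lambda_iZ$ and $Z\mapsto0$; hence $\mathrm{ad}(\kappa)^3|_{\mathfrak{p}}=0$ and $\mathrm{Ad}(\exp\kappa)|_{\mathfrak{p}}=\mathrm{Id}+\mathrm{ad}(\kappa)+\tfrac{1}{2}\mathrm{ad}(\kappa)^2$. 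Writing a general vector as $X=b_0U+\sum_ib_iW_i+b'Z$, I would record the closed form
$$\mathrm{Ad}(\exp\kappa)X=b_0U+\sum_{i=1}^{n-2}(b_i-b_0c_i\lambda_i)W_i+\Big(b'-\sum_{i=1}^{n-2}b_ic_i\lambda_i+\tfrac{b_0}{2}\sum_{i=1}^{n-2}c_i^2\lambda_i^2\Big)Z.$$
With the base-point metric $\hat{g}_0(X,X)=2b_0b'-\sum_ib_i^2$, this formula already suggests that $b_0$ and $\hat{g}_0(X,X)$ are the relevant orbit invariants.

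Next I would split according to the first coordinate. If $b_0=\beta\neq0$, the formula leaves $b_0$ unchanged, and a direct substitution shows that $2b_0b'-\sum_ib_i^2$ is unchanged too; so the orbit of $X$ lies in $\Sigma_{\alpha,\beta}$ with $\alpha=2\beta b'-\sum_ib_i^2$. Conversely, since $\beta\lambda_i\neq0$, the $c_i$ can be chosen to move $(b_1,\dots,b_{n-2})$ to any prescribed value, and the invariance of $2\beta b'-\sum_ib_i^2$ then pins $b'$ to the unique compatible value; hence the orbit is all of $\Sigma_{\alpha,\beta}$. If $b_0=0$ but $(b_1,\dots,b_{n-2})\neq0$, the $W_i$-coordinates are fixed while $b'$ changes by $-\sum_ib_ic_i\lambda_i$, which runs over all of $\R$ since some $b_j\lambda_j\neq0$; this yields the second family. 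If $b_0=0$ and $(b_1,\dots,b_{n-2})=0$, then $X=b'Z$ is central and fixed, giving the third family. The three cases exhaust $\mathfrak{p}$ and are mutually exclusive, so the list of orbits is complete.

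The computations are entirely elementary; the only step that deserves care is the converse half of the case $b_0\neq0$, namely checking that the level set $\{\,b_0=\beta,\ 2\beta b'-\sum_ib_i^2=\alpha\,\}$ is a single $K$-orbit rather than a union of several. This is immediate once one notes that the set is a graph over the $(b_1,\dots,b_{n-2})$-space (hence connected) and that the parameters realizing any prescribed displacement of those coordinates are $c_i=(b_i-b_i')/(\beta\lambda_i)$; but it is worth writing out, along with the observation that $\mathfrak{k}$ being abelian is precisely what lets a general element of $K$ be expressed as a single exponential.
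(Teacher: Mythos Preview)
Your proposal is correct and follows essentially the same approach as the paper: compute the adjoint action of $K$ on $\mathfrak{p}$ explicitly (the paper records exactly your formula, with $p_i$ in place of $c_i$) and then split into the cases $b_0\neq 0$ and $b_0=0$. You supply more detail than the paper---in particular the verification that $2b_0b'-\sum_i b_i^2$ is invariant and the surjectivity/transitivity arguments in each case---but the strategy is identical.
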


\begin{proof}
The adjoint action of $K$ on $\mathfrak{p} \simeq \R^n$ is given by
\begin{align*}
&\mathrm{Ad} (\exp (p_1 K_1 + ... + p_{n-2} K_{n-2})) (b_0, b_1, ..., b_{n-2}, b')\\
&= \Big( b_0, b_1 - \lambda_1 p_1 b_0, ..., b_{n-2} - \lambda_{n-2} p_{n-2} b_0, b' + \frac{b_0}{2} \sum_{i=1}^{n-2} \lambda_i^2 p_i^2 - \sum_{i=1}^{n-2} \lambda_i p_i b_i \Big)
\end{align*}
for $(p_1, ..., p_{n-2}) \in \R^{n-2}$ and $(b_0, b_1, ..., b_{n-2}, b') \in T_{0} M \simeq \mathfrak{p}$. Once $(b_0, b_1, ..., b_{n-2}, b')$ is fixed, we must distinguish the cases when $b_0 \neq 0$ and $b_0 = 0$ to identify the different isotropy orbits.
\end{proof}

Note that the stabilizer of any point $X = (b_0, b_1, ..., b_{n-2}, b') \in T_0 M \simeq \R^n$ such that $b_0 \neq 0$ under the action of the isotropy group $K$ is just the neutral element. Therefore, the $K$-orbit of $X$ is diffeomorphic to $K$.

\begin{definition}
For any $f \in \mathcal{C}_c (M)$ and any point $y = (t_0, x_0, v_0) \in M$, the \emph{orbital integrals} of $f$ at $y$ are given by
$$(M^X f) (y) := \int_K f(gk.\mathrm{Exp}_0 (X))\; dk$$
where $g \in G$ such that $y = g.0$, $X = (b_0, b_1, ..., b_{n-2},b') \in T_0 M \simeq \R^n$ such that $b_0 \neq 0$ and if $\lambda_i > 0$, $b_0 \sqrt{\lambda_i} \notin \pi \mathbb{Z}$, and $dk$ is the invariant measure $(\prod_{i=1}^{n-2} \vert \lambda_i \vert)\, dp_1 ... dp_{n-2}$ on $K \simeq \R^{n-2}$.
\end{definition}

The orbital integrals of a function $f \in \mathcal{C}_c (M)$ at $y \in M$ are integrals of $f$ over the orbits $\mathrm{Exp}_y (\Sigma_{\pm r^2, b_0} (y))$ where
\begin{align*}
\Sigma_{\pm r^2, b_0} (y) := \Big\lbrace &(\tilde{b}_0, b_1, ..., b_{n-2}, b') \in T_y M \mid \tilde{b}_0 = b_0,\\
&\sum_{i=1}^{n-2} \lambda_i x_{0,i}^2 b_0^2 + 2 b_0 b' - \sum_{i=1}^{n-2} b_i^2 = \pm r^2 \Big\rbrace
\end{align*}
for $r > 0$ and $b_0 \in \R_0$ such that if $\lambda_i > 0$, $b_0 \sqrt{\lambda_i} \notin \pi \mathbb{Z}_0$.
We use the following notation for the orbital integrals
\begin{align*}
&(M^{r,b_0}_+ f) (y) := (M^X f) (y), \quad \text{where } X = (b_0, 0, ..., 0, r^2/2b_0) \in T_0 M \simeq \R^n,\\
&(M^{r,b_0}_- f) (y) := (M^X f) (y), \quad \text{where } X = (b_0, 0, ..., 0, -r^2/2b_0) \in T_0 M \simeq \R^n,
\end{align*}
for $r > 0$ and $b_0 \in \R_0$ such that if $\lambda_i > 0$, $b_0 \sqrt{\lambda_i} \notin \pi \mathbb{Z}_0$.

\begin{lemma}\label{lemmaorbsolvable}
For any $f \in \mathcal{C}_c (M)$, $y = g.0 \in M$, $r > 0$ and $b_0 \in \R_0$ such that if $\lambda_i > 0$, $b_0 \sqrt{\lambda_i} \notin \pi \mathbb{Z}_0$,
\begin{align*}
(M^{r, b_0}_\pm f) (y) = \frac{1}{\vert b_0 \vert^{n-2}} \int_{\R^{n-2}} &f\Big( g.\mathrm{Exp}_{0} \Big( b_0, b_1, ..., b_{n-2}, \frac{1}{2b_0} \big( \pm r^2 + \sum_{i=1}^{n-2} b_i^2 \big) \Big) \Big)\\
&db_1 ... db_{n-2}.
\end{align*}
\end{lemma}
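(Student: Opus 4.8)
The plan is to unwind the definition of $(M^X f)(y)$ with $X = (b_0, 0, \dots, 0, \pm r^2/2b_0)$ and perform an explicit change of variables on the integration over $K \simeq \R^{n-2}$. First I would recall from Proposition \ref{proporbsolvable} and the remark following it that, since $b_0 \neq 0$, the stabilizer $H_X$ is trivial, so the $K$-orbit of $X$ is all of $K$ and the orbital integral is genuinely an integral over $\R^{n-2}$ with respect to the measure $dk = (\prod_{i=1}^{n-2}|\lambda_i|)\, dp_1 \cdots dp_{n-2}$. Thus
$$
(M^{r,b_0}_\pm f)(y) = \Big(\prod_{i=1}^{n-2} |\lambda_i|\Big) \int_{\R^{n-2}} f\big(g k(p).\mathrm{Exp}_0(X)\big)\, dp_1 \cdots dp_{n-2},
$$
where $k(p) = \exp(p_1 K_1 + \dots + p_{n-2} K_{n-2})$.

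Next I would observe that acting by $k(p) \in K$ before applying $\mathrm{Exp}_0$ is the same as applying $\mathrm{Exp}_0$ to $\mathrm{Ad}(k(p))X$, because $K$ fixes the base-point and acts linearly on $T_0 M \simeq \mathfrak{p}$; concretely $k(p).\mathrm{Exp}_0(X) = \mathrm{Exp}_0(\mathrm{Ad}(k(p))X)$. Using the explicit adjoint action displayed in the proof of Proposition \ref{proporbsolvable} with $X = (b_0, 0, \dots, 0, \pm r^2/2b_0)$, one gets
$$
\mathrm{Ad}(k(p))X = \Big( b_0,\, -\lambda_1 p_1 b_0,\, \dots,\, -\lambda_{n-2} p_{n-2} b_0,\, \tfrac{\pm r^2}{2b_0} + \tfrac{b_0}{2}\textstyle\sum_{i=1}^{n-2} \lambda_i^2 p_i^2 \Big).
$$
Now substitute $b_i := -\lambda_i p_i b_0$, i.e. $p_i = -b_i/(\lambda_i b_0)$. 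This is a linear bijection of $\R^{n-2}$ with Jacobian $\prod_{i=1}^{n-2} 1/|\lambda_i b_0| = (\prod |\lambda_i|)^{-1}|b_0|^{-(n-2)}$, which cancels the factor $\prod|\lambda_i|$ and produces the prefactor $1/|b_0|^{n-2}$. Under this substitution the last coordinate becomes $\tfrac{\pm r^2}{2b_0} + \tfrac{b_0}{2}\sum \lambda_i^2 (b_i/\lambda_i b_0)^2 = \tfrac{1}{2b_0}\big(\pm r^2 + \sum_{i=1}^{n-2} b_i^2\big)$, which is exactly the fourth entry appearing in the statement, so the integrand becomes $f\big(g.\mathrm{Exp}_0(b_0, b_1, \dots, b_{n-2}, \tfrac{1}{2b_0}(\pm r^2 + \sum b_i^2))\big)$.

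The verification is essentially a bookkeeping exercise once the two reductions above are in place; the only genuinely delicate point is justifying the identity $k(p).\mathrm{Exp}_0(X) = \mathrm{Exp}_0(\mathrm{Ad}(k(p))X)$, which I expect to be the main obstacle. This follows from the equivariance of the exponential mapping under isometries fixing the base-point: for $k \in K$, the diffeomorphism $\tau(k)$ of $M$ satisfies $\tau(k) \circ \mathrm{Exp}_0 = \mathrm{Exp}_0 \circ \tau(k)_{*0}$ and $\tau(k)_{*0}$ is identified with $\mathrm{Ad}(k)|_\mathfrak{p}$ under the isomorphism $\phi$. Since $k(p)$ does have $b_0 \sqrt{\lambda_i} \notin \pi\mathbb{Z}_0$ preserved (the adjoint action leaves $b_0$ fixed), $\mathrm{Ad}(k(p))X$ stays in the domain where $\mathrm{Exp}_0$ is the relevant diffeomorphism, so no issue arises with the exponential being multivalued or undefined. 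Combining everything yields the stated formula for both signs simultaneously.
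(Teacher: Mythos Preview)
Your proposal is correct and follows essentially the same route as the paper's own proof: write the orbital integral using the explicit adjoint action of $K$ on $X=(b_0,0,\dots,0,\pm r^2/2b_0)$ from Proposition~\ref{proporbsolvable}, then perform the linear change of variables $b_i=-\lambda_i p_i b_0$. You have simply made explicit two points the paper leaves implicit, namely the equivariance $k.\mathrm{Exp}_0(X)=\mathrm{Exp}_0(\mathrm{Ad}(k)X)$ and the Jacobian computation; both are handled correctly.
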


\begin{proof}
Using the expression of the action of $K$ on $T_0 M$ given in the proof of proposition \ref{proporbsolvable}, we get
\begin{align*}
(M^{r, b_0}_\pm f) (y) = \Big(\prod_{i=1}^{n-2} \vert \lambda_i \vert \Big) \int_{\R^{n-2}} &f\Big( g.\mathrm{Exp}_{0} \Big( b_0, - \lambda_1 p_1 b_0, ..., - \lambda_{n-2} p_{n-2} b_0,\\
&\frac{1}{2b_0} \big( \pm r^2 + \sum_{i=1}^{n-2} \lambda_i^2 p_i^2 b_0^2 \big) \Big) \Big)\; dp_1 ... dp_{n-2}.
\end{align*}
Then a simple change of variables in the integral yields the result.
\end{proof}


\section{Limit formulas on solvable Lorentzian symmetric spaces}\label{sectionsolvableorbital}

Let $(M,s,\hat{g})$ be the Cahen-Wallach space corresponding to parameters $\lambda_1, ..., \lambda_{n-2} \in \R_0$. In order to determine a function in terms of its orbital integrals on $M$, we consider invariant differential operators.

\subsection{Invariant differential operators}

Let us recall that $M$ is $G$-homogeneous space for the connected, simply connected Lie group $G$ presented in section \ref{sectionsolvable}.

\begin{proposition}\label{propinvarop}
The Laplace-Beltrami operator on $M$ associated to the Lorentzian $\hat{g} = \sum_{i=1}^{n-2} \lambda_i x_i^2 dt \otimes dt + dt \otimes dv + dv \otimes dt - \sum_{i=1}^{n-2} dx_i \otimes dx_i$
$$L := \frac{\partial^2}{\partial t \partial v} + \frac{\partial^2}{\partial v \partial t} - \sum_{i=1}^{n-2} \lambda_i x_i^2 \frac{\partial^2}{\partial v^2} - \sum_{i=1}^{n-2} \frac{\partial^2}{\partial x_i^2}$$
as well as the order-one differential operator $$\frac{\partial}{\partial v}$$ are $G$-invariant.
\end{proposition}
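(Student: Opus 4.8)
The plan is to exhibit $M = G/K$ explicitly as a homogeneous space and verify invariance by a direct computation of how $G$ acts on the global coordinates $(t, x_1, \dots, x_{n-2}, v)$. First I would work out the action of $G$ on $M$ in these coordinates: since $M$ is identified with $\R^n$ via the map $\pi$ defined in Section \ref{sectionsolvable}, the left action of a group element $(t', p', q', r') \in G$ descends through $\pi$ to an explicit diffeomorphism of $\R^n$. From the formula for $\pi$ and the group law $\ast$, one sees that the action splits into two pieces: the one-parameter subgroup generated by $t'$ acts by a shift $t \mapsto t + t'$ combined with the rotation/boost $\varphi_{t'}$ on the $x$-coordinates (and a correction in $v$), while the weighted Heisenberg part acts by translations in $x$ and $v$ together with the shear terms coming from the weighted Heisenberg product. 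Because $G$ is generated by these pieces, it suffices to check invariance of both operators under (a) the $v$-translations, (b) the $x_i$-translations, (c) the weighted Heisenberg shears, and (d) the flow $\varphi_t$ together with the $t$-shift.

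Invariance of $\partial/\partial v$ is the easy half: $\partial/\partial v$ is manifestly unchanged by translations in $v$ and in $x_i$; for the Heisenberg shears and for the $\varphi_t$-flow one computes the pushforward of the coordinate vector field $\partial/\partial v$ and checks it is again $\partial/\partial v$ — this follows because $v$ enters the action only through an additive term and never appears on the right-hand side of the other coordinate transformations, so $\partial v'/\partial v = 1$ and $\partial t'/\partial v = \partial x_i'/\partial v = 0$. For the Laplace–Beltrami operator $L$, the conceptually clean route is to not compute at all in a naive way: since $L$ is built canonically from the metric $\hat g$, and $\hat g$ is by construction the invariant metric associated to the bilinear form $B$ (every symmetry, hence every transvection, is an isometry), $G$ acts on $(M, \hat g)$ by isometries, and the Laplace–Beltrami operator of any pseudo-Riemannian manifold commutes with its isometry group. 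So $G$-invariance of $L$ is automatic once one has recorded that $\hat g$ is $G$-invariant, which is part of the setup in Section \ref{sectionsolvable}. The remaining task is then purely bookkeeping: check that the displayed second-order expression
$$L = \frac{\partial^2}{\partial t\, \partial v} + \frac{\partial^2}{\partial v\, \partial t} - \sum_{i=1}^{n-2} \lambda_i x_i^2 \frac{\partial^2}{\partial v^2} - \sum_{i=1}^{n-2} \frac{\partial^2}{\partial x_i^2}$$
is indeed the Laplace–Beltrami operator of $\hat g$; one does this by inverting the metric in the given coordinates — the inverse of the block with $g_{tv}=g_{vt}=1$, $g_{tt} = \sum \lambda_i x_i^2$, $g_{x_i x_i} = -1$ has $g^{tv}=g^{vt}=1$, $g^{vv} = \sum \lambda_i x_i^2$, $g^{x_i x_i} = -1$, $g^{tt}=0$ — and noting that $\det \hat g$ is constant, so all the $\frac{1}{\sqrt{|\det|}}\partial_\mu(\sqrt{|\det|}\, g^{\mu\nu}\partial_\nu)$ correction terms vanish and $L = g^{\mu\nu}\partial_\mu \partial_\nu$ with the above components.

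I would present the argument in that order: first recall $\hat g$ is $G$-invariant (cited from Section \ref{sectionsolvable}), then verify the displayed $L$ equals its Laplace–Beltrami operator via the constant-determinant / explicit-inverse remark, conclude $G$-invariance of $L$ from "isometries commute with the Laplacian," and finally handle $\partial/\partial v$ by the direct one-line pushforward check described above. The main obstacle is essentially clerical rather than conceptual — one must be careful with the $x$-dependent coefficient in $g_{tt}$ when writing the inverse metric and in confirming that no first-order terms appear — but because $\det \hat g$ is constant this is short. If one instead insisted on a self-contained direct verification of $L$'s invariance (not using the isometry argument), the hard part would be checking invariance under the $\varphi_t$-flow composed with the $t$-shift, since that transformation mixes $t$, $x_i$ and $v$ nontrivially through the $\ch$ and $\sh$ functions; this is why routing through the metric is preferable.
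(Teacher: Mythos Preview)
The paper states Proposition~\ref{propinvarop} without proof, so there is nothing to compare your approach against directly; your strategy is sound and would constitute a complete proof. The route through ``$G$ acts by isometries, hence commutes with the Laplace--Beltrami operator'' is the natural one, and your observation that $\det \hat g$ is constant (so that no first-order terms arise) is exactly what makes the coordinate verification of the displayed formula for $L$ short. For $\partial/\partial v$, a slightly slicker phrasing than the pushforward check is to note that $\partial/\partial v$ is the fundamental vector field of the central element $Z \in \mathfrak{z}(\mathfrak{g})$, and centrality ($\mathrm{Ad}(g)Z = Z$ for all $g$) immediately gives $G$-invariance; but your direct argument that $v$ appears only additively in the action is equivalent and perfectly fine.

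One small correction: in your inverse-metric computation you wrote $g^{vv} = \sum_i \lambda_i x_i^2$, but the inverse of the $(t,v)$-block $\begin{pmatrix} \sum_i \lambda_i x_i^2 & 1 \\ 1 & 0 \end{pmatrix}$ is $\begin{pmatrix} 0 & 1 \\ 1 & -\sum_i \lambda_i x_i^2 \end{pmatrix}$, so $g^{vv} = -\sum_i \lambda_i x_i^2$. This sign is what produces the $-\sum_i \lambda_i x_i^2\, \partial^2/\partial v^2$ term in $L$, so the final formula you are verifying is correct; just fix the typo in the intermediate step.
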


At any point $y = (t_0, x_0, v_0) = g.0 \in M$, we consider coordinates $(r, b_0, b_1, ..., b_{n-2})$ on either $\lbrace \mathrm{Exp}_y (\bar{b}) \mid \bar{b} \in T_y M, \; \hat{g}_y (\bar{b}, \bar{b}) > 0 \rbrace$ or $\lbrace \mathrm{Exp}_y (\bar{b}) \mid \bar{b} \in T_y M, \; \hat{g}_y (\bar{b}, \bar{b}) < 0 \rbrace$ which are defined by
$$(r, b_0, b_1, ..., b_{n-2}) \mapsto g.\mathrm{Exp}_{0} \Big( b_0, b_1, ..., b_{n-2}, \frac{1}{2b_0} \big( \varepsilon \, r^2 + \sum_{i=1}^{n-2} b_i^2 \big) \Big),$$
where $\epsilon = 1$ or $-1$ respectively. In this coordinates, the expression of the two invariant differential operator in proposition \ref{propinvarop} is
\begin{align*}
&L = \varepsilon \, \Big\lbrace \frac{1}{r} \frac{\partial}{\partial r} \Big( r \frac{\partial}{\partial r} \Big) + \frac{1}{r} \frac{\partial}{\partial r} \Big( b_0 \frac{\partial}{\partial b_0} \Big) + \frac{1}{\frac{r}{\vert b_0 \vert} \prod_{i=1}^{n-2} \frac{\sh (b_0 \sqrt{\vert \lambda_i \vert})}{\sqrt{\vert \lambda_i \vert}}}\\
&\quad \quad \quad \frac{\partial}{\partial b_0} \Big( \frac{b_0}{\vert b_0 \vert} \prod_{i=1}^{n-2} \frac{\sh (b_0 \sqrt{\vert \lambda_i \vert})}{\sqrt{\vert \lambda_i \vert}} \frac{\partial}{\partial r}\Big) \Big\rbrace + L_{\mathrm{Exp}_y  (\Sigma_{\varepsilon \, r^2, b_0} (y))},\\
&\frac{\partial}{\partial v} = \varepsilon \, \frac{b_0}{r} \frac{\partial}{\partial r},
\end{align*}
where $L_{\mathrm{Exp}_y  (\Sigma_{\varepsilon \, r^2, b_0} (y))}$ is the Laplace-Beltrami operator on the isotropy orbit associated to the induced metric and $\epsilon$ is either $1$ or $-1$ depending on whether the domain is $\lbrace \mathrm{Exp}_y (\bar{b}) \mid \bar{b} \in T_y M, \; \hat{g}_y (\bar{b}, \bar{b}) > 0 \rbrace$ or $\lbrace \mathrm{Exp}_y (\bar{b}) \mid \bar{b} \in T_y M, \; \hat{g}_y (\bar{b}, \bar{b}) < 0 \rbrace$.

\begin{proposition}\label{propintegrals}
For any $f \in \mathcal{C}_c^\infty (M)$, $y \in M$, $r > 0$ and $b_0 \in \R_0$ such that if $\lambda_i > 0$, $b_0 \sqrt{\lambda_i} \notin \pi \mathbb{Z}_0$,
\begin{enumerate}
\item $L (M^{r, b_0}_\pm f) (y) = M^{r,b_0}_\pm (Lf) (y)$,
\item $\frac{\partial}{\partial v} (M^{r,b_0}_\pm f) (y) = M^{r,b_0}_\pm \Big( \frac{\partial f}{\partial v} \Big) (y)$,
\item $M^{r,b_0}_\pm \Big( \frac{\partial f}{\partial v} \Big) (y) = \pm \frac{b_0}{r} \frac{\partial}{\partial r} (M^{r,b_0}_\pm f) (y)$.
\end{enumerate}
\end{proposition}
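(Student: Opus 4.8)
The strategy is to reduce all three statements to the explicit integral formula for the orbital integrals given in Lemma \ref{lemmaorbsolvable} together with the $G$-invariance of $L$ and $\partial/\partial v$ recorded in Proposition \ref{propinvarop}. For item (1), I would argue as follows. Since $L$ is $G$-invariant, for $y = g.0$ we have $L(M^{r,b_0}_\pm f)(y) = \big(L(M^{r,b_0}_\pm f)\circ\tau(g)\big)(0)$, so it suffices to treat the case $y = 0$ and then translate the conclusion back by $g$. At $y=0$ the orbital integral is literally an integral of $f$ against a measure supported on the isotropy orbit $\mathrm{Exp}_0(\Sigma_{\pm r^2, b_0}(0))$, with the coordinate description supplied just before Proposition \ref{propintegrals}. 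In those coordinates $(r,b_0,b_1,\dots,b_{n-2})$, the operator $L$ splits as the displayed radial-type part in $r$ and $b_0$ plus the Laplace-Beltrami operator $L_{\mathrm{Exp}_y(\Sigma_{\varepsilon r^2,b_0}(y))}$ along the orbit. The key point is a Darboux-type commutation: applying $L$ to $f$ and then integrating over the orbit equals integrating over the orbit and then applying $L$ — the orbit-tangential piece $L_{\mathrm{Exp}(\Sigma)}$ integrates to zero against the invariant measure on a (geodesically complete, since it is $K\cong\R^{n-2}$) orbit for compactly supported $f$, exactly as in the classical Asgeirsson/Darboux argument, while the transverse piece passes through the integral sign because differentiation in $r$ and $b_0$ commutes with the fiber integration in $b_1,\dots,b_{n-2}$. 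This gives $L(M^{r,b_0}_\pm f)(y) = M^{r,b_0}_\pm(Lf)(y)$.

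For item (2), the same mechanism is even simpler: $\partial/\partial v$ is $G$-invariant, so again reduce to $y=0$; then from the formula in Lemma \ref{lemmaorbsolvable} one differentiates under the integral sign in the $b'$-slot. Concretely, writing $F(b_1,\dots,b_{n-2}) := f\big(g.\mathrm{Exp}_0(b_0,b_1,\dots,b_{n-2}, \tfrac{1}{2b_0}(\pm r^2 + \sum b_i^2))\big)$, differentiating the orbital integral in the parameter that shifts the $v$-coordinate of the base-point corresponds precisely to inserting $(\partial f/\partial v)$ inside the same integral, because $v$-translation is the $G$-action generated by $\partial/\partial v$ and it commutes with the $K$-integration defining $M^X f$. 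Alternatively one checks directly from the explicit $\mathrm{Exp}_0$ formula that moving $v_0$ shifts the last coordinate of $\mathrm{Exp}_0(\bar b)$ rigidly, so $\partial/\partial v$ acts on the orbital integral simply by differentiating the integrand's last argument.

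Item (3) is the one tying everything to the one-variable radial calculus, and it is where I expect the real computation to live. Here I would use the coordinate expression $\partial/\partial v = \varepsilon\,\tfrac{b_0}{r}\,\tfrac{\partial}{\partial r}$ displayed before the proposition, with $\varepsilon = +1$ on the domain $\hat g_y(\bar b,\bar b) > 0$ (the $M^{r,b_0}_+$ case) and $\varepsilon = -1$ on $\hat g_y(\bar b,\bar b) < 0$ (the $M^{r,b_0}_-$ case). Applying this identity to the function $M^{r,b_0}_\pm f$ — which by construction is a function of the coordinates $(r,b_0,b_1,\dots,b_{n-2})$ that is constant along the orbit, hence well-defined on the $(r,b_0)$-parameter space — and combining with item (2) gives exactly $M^{r,b_0}_\pm(\partial f/\partial v)(y) = \pm\tfrac{b_0}{r}\tfrac{\partial}{\partial r}(M^{r,b_0}_\pm f)(y)$. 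The main obstacle, and the step deserving care, is justifying the coordinate formula for $\partial/\partial v$ itself: one must verify that the vector field $\partial/\partial v$ in the original global coordinates $(t,x,v)$ pushes forward, under the change to polar-geodesic coordinates $(r,b_0,b_1,\dots,b_{n-2})$ built from $\mathrm{Exp}_y$, to $\varepsilon\,\tfrac{b_0}{r}\,\tfrac{\partial}{\partial r}$ — equivalently, that the only dependence of the $v$-coordinate of $\mathrm{Exp}_y(\bar b)$ on the pseudo-radius $r = \sqrt{|\hat g_y(\bar b,\bar b)|}$ (with $b_0$ and the orbit-direction fixed) is through the linear term $b' = \tfrac{1}{2b_0}(\pm r^2 + \sum b_i^2)$, so that $\partial v/\partial r = \pm r/b_0$ and everything else is $r$-independent. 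This is a direct but slightly delicate inspection of the explicit $\mathrm{Exp}_y$ formula from the previous subsection; once it is in hand, items (1)–(3) follow as described, and the proof can simply refer to the coordinate expressions already established.
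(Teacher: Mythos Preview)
Your handling of items (2) and (3) is correct and essentially matches the paper's: for (3) both you and the paper simply invoke the local expression $\partial/\partial v = \varepsilon\,\tfrac{b_0}{r}\tfrac{\partial}{\partial r}$ established just before the proposition, and for (2) your differentiation-under-the-integral argument is a concrete instance of the invariance principle the paper uses uniformly.

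For item (1), however, your outline has a genuine gap. The coordinate decomposition $L = L_{\mathrm{radial}} + L_{\mathrm{Exp}_y(\Sigma)}$ describes $L$ acting on the \emph{argument} of $f$, in polar geodesic coordinates centered at a fixed $y$. Your argument (the orbit piece integrates to zero; the radial piece pulls through the $b_1,\dots,b_{n-2}$ integration) therefore yields
\[
M^{r,b_0}_\pm(Lf)(y) \;=\; L_{\mathrm{radial}}\big(M^{r,b_0}_\pm f\big)(y),
\]
with $L_{\mathrm{radial}}$ acting on the parameters $(r,b_0)$. But the statement to prove is $L_y\big(M^{r,b_0}_\pm f\big)(y) = M^{r,b_0}_\pm(Lf)(y)$, where $L$ acts in the \emph{base-point} variable $y$; nothing in your plan connects $L_y$ acting on $y'\mapsto (M^{r,b_0}_\pm f)(y')$ to $L_{\mathrm{radial}}$ acting on $(r,b_0)$. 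Reducing to $y=0$ by $G$-invariance does not help, since one cannot evaluate a differential operator in $y$ at a single point. You have proved the ``easy'' half of a Darboux identity and are missing the other half.

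The paper avoids this entirely by lifting to the group. It invokes Helgason's theorem that every $G$-invariant operator $D$ on $M=G/K$ is induced by a \emph{bi-invariant} operator $\tilde D$ on $G$ via $\tilde D(f\circ\pi)=(Df)\circ\pi$. Writing the orbital integral at $g.0$ as an integral over $K$ of $(f\circ\pi)$ evaluated at $g$ right-translated by elements depending only on $(r,b_0,b_1,\dots,b_{n-2})$, bi-invariance of $\tilde D$ lets it pass through the integral and through each right translation, giving $D(M^X f)=M^X(Df)$ in one stroke for both $D=L$ and $D=\partial/\partial v$. Your approach can be salvaged by supplying the missing identity $L_y(M^{r,b_0}_\pm f)(y)=L_{\mathrm{radial}}(M^{r,b_0}_\pm f)(y)$ separately, but the bi-invariant lift is both shorter and conceptually cleaner.
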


\begin{proof}
By a theorem stated in S. Helgason's book \cite{Hel59}, for every $G$-invariant differential operator $D$ on $M$, there exists a bi-invariant differential operator $\tilde{D}$ on $G$ such that
$$\forall f \in \mathcal{C}^\infty (M), \; \tilde{D} (f \circ \pi) = (Df) \circ \pi,$$
where $\pi : G \rightarrow M$ is the canonical projection. Therefore, when we apply $D$ to the orbital integrals of a function $f \in \mathcal{C}^\infty_c (M)$, we get
\begin{align*}
D &(M^{r,b_0}_\pm f) (g.0) = \tilde{D} \big( (M^{r,b_0}_\pm f) \circ \pi \big) (g)\\
&= \tilde{D} \Big\lbrace \frac{1}{\vert b_0 \vert^{n-2}} \int_{\R^{n-2}} f\Big(g.\mathrm{Exp}_{0} \Big(b_0, b_1, ..., b_{n-2}, \frac{1}{2b_0} \big( \pm r^2 + \sum_{i=1}^{n-2} b_i^2 \big) \Big)\Big)\\
& \qquad db_1... db_{n-2} \Big\rbrace\\
&= \frac{1}{\vert b_0 \vert^{n-2}} \int_{\R^{n-2}} (D f) \Big(g.\mathrm{Exp}_{0} \Big(b_0, b_1, ..., b_{n-2}, \frac{1}{2b_0} \big( \pm r^2 + \sum_{i=1}^{n-2} b_i^2 \big) \Big)\Big)\\
& \qquad db_1... db_{n-2}
\end{align*}
because $\tilde{D}$ is bi-invariant. This implies the points $1$ and $2$. For point $3$, we use the local expression of $\frac{\partial}{\partial v}$.
\end{proof}

\subsection{One-parameter generalized Riesz potentials}

Let us focus on the first set of orbital integrals, namely $(M^{r,b_0}_+ f)$. The corresponding one-parameter generalized Riesz potentials are then given below.

\begin{definition}
For any $f \in \mathcal{C}_c (M)$, $y \in M$, $b_0 \in \R_0$ such that if $\lambda_i > 0$, $b_0 \sqrt{\lambda_i} \notin \pi \mathbb{Z}_0$ and $\mu \in \C$ such that $\mathrm{Re} (\mu) > n-1$,
$$(J^{\mu, b_0}_+ f) (y) := \frac{1}{H_{n-1} (\mu)} \int_0^\infty (M^{r,b_0}_+ f) (y) r^{\mu - n +2} dr$$
where $H_{n-1} (\mu) := \pi^{(n-3)/2} 2^{\mu -1} \Gamma (\mu/2) \Gamma ((\mu + 3 - n) / 2)$.
\end{definition}

\begin{remark}\label{generalRieszpot}
For any $y = g.0 \in M$ and fixed $b_0 \in \R_0$ such that if $\lambda_i > 0$, $b_0 \sqrt{\lambda_i} \notin \pi \mathbb{Z}_0$,
\begin{align*}
(J^{\mu, b_0}_+ f) (y) = &\frac{1}{\vert b_0 \vert^{n-2}} \frac{1}{H_{n-1} (\mu)} \int_{(D_0^+)_{cc}} F_{g, b_0} (u_1, ..., u_{n-1})\\
&u_{n-1} \Big( \sqrt{u_{n-1}^2 - u_{n-2}^2 - ... - u_1^2} \Big)^{\mu - n + 1} du_1 ... du_{n-1}
\end{align*}
where $F_{g,b_0} (u_1, ..., u_{n-1}) := f \big(g.\mathrm{Exp}_0 \big(b_0, u_1, ..., u_{n-2}, \frac{u_{n-1}^2}{2b_0}\big) \big)$ and
$$(D_0^+)_{cc} := \lbrace (u_1, ..., u_{n-1}) \in \R^{n-1} \mid u_{n-1}^2 - u_{n-2}^2 - ... - u_1^2 > 0, \; u_{n-1} > 0 \rbrace.$$
This means that the one-parameter generalized Riesz potentials on $M$ turn out to be some flat Lorentzian Riesz potentials on $(\R^{n-1}, \mathbb{I}_{1,n-2})$.
\end{remark}

\begin{proposition}\label{propgeneralRiesz}
For any $f \in \mathcal{C}_c^\infty (M)$, $y = g.0 \in M$ and fixed $b_0 \in \R_0$ such that if $\lambda_i > 0$, $b_0 \sqrt{\lambda_i} \notin \pi \mathbb{Z}_0$,
\begin{enumerate}
\item $(J^{\mu,b_0}_+ f) (y)$ holomorphically extends to the whole complex plane $\C$ with respect to $\mu$,
\item $\underset{\mu \rightarrow 0}{\lim}\; (J^{\mu, b_0}_+ f) (y) = \frac{1}{\vert b_0 \vert^{n-2}} F_{g,b_0} (0, ..., 0) .0 = 0$,
\item $\frac{\partial}{\partial v} (J^{\mu, b_0}_+ f) (y) = J^{\mu, b_0}_+ \Big( \frac{\partial f}{\partial v} \Big) (y) = - \frac{b_0}{\mu - 2} (J^{\mu - 2, b_0}_+ f) (y)$.
\end{enumerate}
\end{proposition}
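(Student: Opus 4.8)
The plan is to prove the three assertions by reducing everything, via Remark~\ref{generalRieszpot}, to facts about the flat Lorentzian Riesz potentials on $(\R^{n-1}, \mathbb{I}_{1,n-2})$, which are already contained in the results of S.~Helgason (Proposition~\ref{Lorentzprop}) and J.~Orloff (Theorem~\ref{flatRieszpot}). Concretely, after passing to the coordinates $(u_1,\dots,u_{n-1})$ of the Remark, the quantity $(J^{\mu,b_0}_+ f)(y)$ becomes, up to the constant $1/\vert b_0\vert^{n-2}$, a one-parameter family of integrals of the compactly supported function $F_{g,b_0}$ over the future solid cone $(D_0^+)_{cc}$ against the kernel $u_{n-1}\,(u_{n-1}^2 - u_{n-2}^2 - \cdots - u_1^2)^{(\mu-n+1)/2}$. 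This is precisely of the shape $I^{\mu}_+$ on $\R^{n-1}$ applied to the function $u_{n-1}F_{g,b_0}(u)$, so point~(1), the holomorphic continuation in $\mu$, follows directly from part~(1) of Theorem~\ref{flatRieszpot} (or from the integration-by-parts argument in \cite{Hel59}), after checking that the exponent shifts bookkeeping $\mu \mapsto \lambda = \mu$ matches $H_{n-1}(\mu)$ with the normalizing constant $H_{n-1}(\lambda)$ there.

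For point~(2) I would evaluate the continued potential at $\mu = 0$ using part~(3) of Theorem~\ref{flatRieszpot}: on $\R^{n-1}$ with signature $(1,n-2)$ one has $I^0_+ h = 2\sin(\tfrac{\pi}{2}) h(0) = 2 h(0)$ for $h$ Schwartz; here the relevant function is $h(u) = u_{n-1}F_{g,b_0}(u)$, which vanishes at the origin because of the factor $u_{n-1}$. Hence the limit is $\tfrac{2}{\vert b_0\vert^{n-2}} \cdot 0 \cdot F_{g,b_0}(0,\dots,0) = 0$, which is exactly the stated identity. Point~(3) splits into two parts. That $\tfrac{\partial}{\partial v}$ commutes with $J^{\mu,b_0}_+$ is immediate from the $G$-invariance of $\tfrac{\partial}{\partial v}$ together with point~(2) of Proposition~\ref{propintegrals} and the definition of $J^{\mu,b_0}_+$ as an $r$-integral of $M^{r,b_0}_+ f$ — one differentiates under the integral sign, which is legitimate since $f$ has compact support. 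For the recursion $J^{\mu,b_0}_+(\partial f/\partial v) = -\tfrac{b_0}{\mu-2} J^{\mu-2,b_0}_+ f$, I would use part~(3) of Proposition~\ref{propintegrals}, namely $M^{r,b_0}_+(\partial f/\partial v) = \tfrac{b_0}{r}\tfrac{\partial}{\partial r}(M^{r,b_0}_+ f)$, plug it into the defining integral, and integrate by parts in $r$: the boundary terms vanish (at $r=0$ because of the high power $r^{\mu-n+2}$ when $\Real(\mu)$ is large, at $r=\infty$ by compact support), and the derivative falling on $r^{\mu-n+1}$ produces the factor $(\mu-n+1)$, which combined with the ratio $H_{n-1}(\mu)/H_{n-1}(\mu-2)$ collapses to $-\tfrac{b_0}{\mu-2}$; the identity then propagates to all $\mu\in\C$ by the holomorphic continuation of point~(1).

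The main obstacle is bookkeeping rather than conceptual: one must carefully match the change of variables $b_i = u_i$ ($1\le i\le n-2$) and $r = \sqrt{u_{n-1}^2 - u_{n-2}^2-\cdots-u_1^2}$ (so that the isotropy orbit $\Sigma_{r^2,b_0}$ is swept out with the correct Jacobian, producing the extra factor $u_{n-1}$ in the kernel), verify that the normalizing constants $H_{n-1}(\mu)$ here and $H_{n-1}(\lambda)$ of Theorem~\ref{flatRieszpot} agree under $\lambda = \mu$, and confirm that applying Orloff's flat results to the non-smooth but still Schwartz-class factor $u_{n-1}F_{g,b_0}$ is permissible — all of this is routine but must be done with care to get the constant in point~(3) exactly right. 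Once the reduction in Remark~\ref{generalRieszpot} is taken as given, essentially no new analysis is required beyond an integration by parts in one real variable.
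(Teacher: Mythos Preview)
Your proposal is correct and follows essentially the same route as the paper: properties (1) and (2) are deduced from Remark~\ref{generalRieszpot} together with Theorem~\ref{flatRieszpot}, and property (3) is obtained from the local expression $\partial/\partial v = (b_0/r)\,\partial/\partial r$ (equivalently Proposition~\ref{propintegrals}(3)) by an integration by parts in $r$ for $\Real(\mu)$ large, followed by holomorphic continuation. Your write-up is in fact more explicit than the paper's, which states the strategy in two sentences; the Gamma-function bookkeeping you outline for the ratio $H_{n-1}(\mu)/H_{n-1}(\mu-2)$ indeed collapses the factor $(\mu-n+1)$ to give exactly $-b_0/(\mu-2)$.

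One small remark on your caveat about the factor $u_{n-1}F_{g,b_0}$: since $F_{g,b_0}$ depends on $u_{n-1}$ only through $u_{n-1}^2$ and $f$ has compact support, the function $F_{g,b_0}$ is smooth and compactly supported on $\R^{n-1}$, so $u_{n-1}F_{g,b_0}\in\mathcal{C}_c^\infty(\R^{n-1})\subset\mathcal{S}(\R^{n-1})$ and Theorem~\ref{flatRieszpot} applies without further justification. The only point to watch is that the integral in Remark~\ref{generalRieszpot} is over the forward component $(D_0^+)_{cc}$ rather than the full $D_+$, but since $F_{g,b_0}$ is even in $u_{n-1}$ this is handled by the standard ``retarded'' version of the Riesz potential, and the holomorphic continuation and evaluation at $\mu=0$ go through unchanged.
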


\begin{proof}
Property $1$ is true thanks to remark \ref{generalRieszpot} and theorem \ref{flatRieszpot} which states that flat Lorentzian Riesz potentials can be holomorphically extended on the whole complex plane with respect to the parameter.
Property $2$ also follows from theorem \ref{flatRieszpot} thanks to remark \ref{generalRieszpot}.
Finally, we use the local expression of the invariant differential operator 
$$\frac{\partial}{\partial v} = \frac{b_0}{r} \frac{\partial}{\partial r}$$
on a dense open subset of $\lbrace \mathrm{Exp}_y (\bar{b}) \mid \bar{b} \in T_y M, \, \hat{g}_y (\bar{b}, \bar{b}) > 0 \rbrace$ to get property $3$ whenever $\mathrm{Re} (\mu) > n-1$. Furthermore, it remains true for every $\mu \in \C$ thanks to the uniqueness of the holomorphic extensions.
\end{proof}

\begin{corollary}
For any $f \in \mathcal{C}^\infty_c (M)$, $y \in M$, $b_0 \in \R_0$ such that if $\lambda_i > 0$, $b_0 \sqrt{\lambda_i} \notin \pi \mathbb{Z}_0$ and any $k \in \mathbb{N}_0$,
$$\frac{\partial^k}{\partial v^k} (J^{\mu, b_0}_+ f) (y) = J^{\mu, b_0}_+ \Big( \frac{\partial^k f}{\partial v^k} \Big) (y) = \frac{(-b_0)^k}{(\mu - 2) (\mu-4) ... (\mu - 2k)} (J^{\mu-2k, b_0}_+ f) (y).$$
\end{corollary}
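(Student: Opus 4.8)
The plan is to prove the corollary by a straightforward induction on $k$, using the case $k=1$ established in Proposition \ref{propgeneralRiesz} as the base case. So first I would record what needs proving: for every $k \in \mathbb{N}_0$ and every admissible $b_0$, the two equalities
\begin{align*}
\frac{\partial^k}{\partial v^k} (J^{\mu, b_0}_+ f) (y) &= J^{\mu, b_0}_+ \Big( \frac{\partial^k f}{\partial v^k} \Big) (y),\\
J^{\mu, b_0}_+ \Big( \frac{\partial^k f}{\partial v^k} \Big) (y) &= \frac{(-b_0)^k}{(\mu - 2)(\mu-4)\cdots(\mu-2k)} (J^{\mu-2k, b_0}_+ f) (y)
\end{align*}
hold as identities of holomorphic functions of $\mu$ on all of $\C$. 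The case $k=0$ is trivial and the case $k=1$ is exactly point $3$ of Proposition \ref{propgeneralRiesz}.

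For the inductive step, assume the statement holds for some $k \geq 1$ and apply $\partial/\partial v$ to both sides of the $k$-th instance. On the left, $\frac{\partial}{\partial v}\frac{\partial^k}{\partial v^k}(J^{\mu,b_0}_+ f)(y) = \frac{\partial^{k+1}}{\partial v^{k+1}}(J^{\mu,b_0}_+ f)(y)$. For the right-hand chain, I would first use point $3$ of Proposition \ref{propgeneralRiesz} applied to the function $\frac{\partial^k f}{\partial v^k} \in \mathcal{C}^\infty_c(M)$ to get
$$\frac{\partial}{\partial v} J^{\mu,b_0}_+\Big(\frac{\partial^k f}{\partial v^k}\Big)(y) = J^{\mu,b_0}_+\Big(\frac{\partial^{k+1} f}{\partial v^{k+1}}\Big)(y) = -\frac{b_0}{\mu-2} J^{\mu-2,b_0}_+\Big(\frac{\partial^k f}{\partial v^k}\Big)(y),$$
and then apply the induction hypothesis with $\mu$ replaced by $\mu-2$ to rewrite $J^{\mu-2,b_0}_+\big(\frac{\partial^k f}{\partial v^k}\big)(y)$ as $\frac{(-b_0)^k}{(\mu-4)(\mu-6)\cdots(\mu-2-2k)}(J^{\mu-2-2k,b_0}_+ f)(y)$. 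Multiplying the prefactors $-\frac{b_0}{\mu-2}$ and $\frac{(-b_0)^k}{(\mu-4)\cdots(\mu-2(k+1))}$ together yields exactly $\frac{(-b_0)^{k+1}}{(\mu-2)(\mu-4)\cdots(\mu-2(k+1))}$ times $(J^{\mu-2(k+1),b_0}_+ f)(y)$, which is the $(k+1)$-st instance. The two separate equalities in the statement are obtained by reading the chain from its left end and from its middle respectively.

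There is essentially no hard part here; the only point demanding a word of care is the domain of validity in $\mu$. Point $3$ of Proposition \ref{propgeneralRiesz} is first proved for $\mathrm{Re}(\mu) > n-1$ and then extended to all $\mu \in \C$ by uniqueness of holomorphic continuation (Proposition \ref{propgeneralRiesz}, point $1$), so each step in the induction is an identity of entire functions of $\mu$; in particular the shift $\mu \mapsto \mu-2$ in the induction hypothesis is legitimate for every $\mu$, and the poles of the rational prefactor are cancelled by zeros of $J^{\mu-2k,b_0}_+ f$ so that the product is again entire. I would note that the denominators $(\mu-2)(\mu-4)\cdots(\mu-2k)$ appearing along the way cause no trouble precisely because the claimed identity is between holomorphic functions, and $J^{\mu-2k,b_0}_+ f$ vanishes appropriately at those points (as is implicit in Proposition \ref{propgeneralRiesz}, point $2$, together with iteration). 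No further computation is needed.
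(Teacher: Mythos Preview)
Your induction argument is correct and is precisely the approach the paper has in mind: the corollary is stated without proof in the paper, as an immediate consequence of iterating point~3 of Proposition~\ref{propgeneralRiesz}. Your remarks on holomorphic continuation and the cancellation of the apparent poles at $\mu = 2, 4, \dots, 2k$ are a welcome clarification of a point the paper leaves implicit.
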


\begin{proposition}
For any $f \in \mathcal{C}^\infty_c (M)$, $y \in M$ and $b_0 \in \R_0$ such that if $\lambda_i > 0$, $b_0 \sqrt{\lambda_i} \notin \pi \mathbb{Z}_0$,
$$J^{1, b_0}_+ \Big( \frac{\partial f}{\partial v} \Big) (y) = \underset{\mu \rightarrow -1}{\lim}\; \frac{-b_0}{\mu} (J^{\mu, b_0}_+ f) (y) = \frac{-b_0 \Gamma \big(\frac{n-1}{2}\big)}{\vert b_0 \vert^{n-2} \sqrt{\pi} \Gamma \big( \frac{n}{2} \big)} f(g.\mathrm{Exp}_0 (b_0, 0, ..., 0)).$$
\end{proposition}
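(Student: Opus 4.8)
The plan is to treat the two equalities separately; the first is purely formal, and only the second carries content. For the first, Proposition \ref{propgeneralRiesz}(3) is an identity between entire functions of $\mu$ (Proposition \ref{propgeneralRiesz}(1)), so it holds at $\mu=1$, where it reads $J^{1,b_0}_+(\partial f/\partial v)(y)=-\tfrac{b_0}{1-2}(J^{-1,b_0}_+f)(y)=b_0\,(J^{-1,b_0}_+f)(y)$; and since $\mu\mapsto(J^{\mu,b_0}_+f)(y)$ is continuous (indeed holomorphic) at $\mu=-1$, we get $\lim_{\mu\to-1}\tfrac{-b_0}{\mu}(J^{\mu,b_0}_+f)(y)=b_0\,(J^{-1,b_0}_+f)(y)$ as well. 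Hence the whole statement reduces to the single evaluation
$$(J^{-1,b_0}_+f)(y)=-\frac{\Gamma\bigl(\tfrac{n-1}{2}\bigr)}{|b_0|^{n-2}\sqrt{\pi}\,\Gamma\bigl(\tfrac n2\bigr)}\,f\bigl(g.\mathrm{Exp}_0(b_0,0,\dots,0)\bigr).$$

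First I would rewrite the left-hand side using Remark \ref{generalRieszpot}, which presents $(J^{\mu,b_0}_+f)(y)$ as $\tfrac{1}{|b_0|^{n-2}H_{n-1}(\mu)}$ times the integral of $F_{g,b_0}(u)\,u_{n-1}\,(\sqrt{Q(u)})^{\mu-n+1}$ over the forward half-cone $(D_0^+)_{cc}$, with $Q(u)=u_{n-1}^2-u_{n-2}^2-\dots-u_1^2$. Passing to polar coordinates $u=s\omega$ on this cone ($s>0$, $\omega$ on the upper unit hyperboloid $H^+=\{Q(\omega)=1,\ \omega_{n-1}>0\}$, $du=s^{n-2}\,ds\,d\omega$, $Q(u)=s^2$, $u_{n-1}=s\omega_{n-1}$) turns it into $\tfrac{1}{|b_0|^{n-2}H_{n-1}(\mu)}\int_0^\infty s^{\mu}\Lambda(s)\,ds$ with $\Lambda(s)=\int_{H^+}\omega_{n-1}F_{g,b_0}(s\omega)\,d\omega$. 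The key point is that $F_{g,b_0}$ depends on $u_{n-1}$ only through $u_{n-1}^2/(2b_0)$: parametrizing $H^+$ by $\omega'\in\R^{n-2}$ (so $\omega_{n-1}=\sqrt{1+\|\omega'\|^2}$, $d\omega=d\omega'/\omega_{n-1}$) and substituting $u'=s\omega'$ collapses $\Lambda$ to $\Lambda(s)=s^{2-n}R(s^2)$, where
$$R(\sigma):=\int_{\R^{n-2}}f\Bigl(g.\mathrm{Exp}_0\bigl(b_0,u',\tfrac{\sigma+\|u'\|^2}{2b_0}\bigr)\Bigr)\,du'$$
is smooth and compactly supported on $[0,\infty)$ (and equals $|b_0|^{n-2}(M^{\sqrt{\sigma},b_0}_+f)(y)$ for $\sigma>0$, cf.\ Lemma \ref{lemmaorbsolvable}). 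Putting $\sigma=s^2$ yields
$$(J^{\mu,b_0}_+f)(y)=\frac{1}{2|b_0|^{n-2}H_{n-1}(\mu)}\int_0^\infty\sigma^{\nu-1}R(\sigma)\,d\sigma,\qquad \nu:=\frac{\mu-n+3}{2}.$$

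Next I would run the Riemann--Liouville continuation: $\int_0^\infty\sigma^{\nu-1}R(\sigma)\,d\sigma$ continues meromorphically and equals $\Gamma(\nu)\mathcal{E}(\nu)$ for an entire function $\mathcal{E}$ with $\mathcal{E}(-k)=(-1)^kR^{(k)}(0)$; since $H_{n-1}(\mu)=\pi^{(n-3)/2}2^{\mu-1}\Gamma(\mu/2)\Gamma(\nu)$, the $\Gamma(\nu)$ cancel and
$$(J^{\mu,b_0}_+f)(y)=\frac{\mathcal{E}(\nu)}{2^\mu|b_0|^{n-2}\pi^{(n-3)/2}\Gamma(\mu/2)},$$
which in passing re-establishes Proposition \ref{propgeneralRiesz}(1); at $\mu=-1$ ($\nu=\tfrac{2-n}{2}$, $\Gamma(-\tfrac12)=-2\sqrt\pi$) this equals $-\mathcal{E}(\tfrac{2-n}{2})/(|b_0|^{n-2}\pi^{(n-2)/2})$. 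So it remains to prove $\mathcal{E}(\tfrac{2-n}{2})=\tfrac{\pi^{(n-3)/2}\Gamma((n-1)/2)}{\Gamma(n/2)}\,f(g.\mathrm{Exp}_0(b_0,0,\dots,0))$. Here I would substitute $u'=\sqrt\sigma\,u''$ in $R$ and recognize, via Remark \ref{generalRieszpot}, the resulting object as (half of) a flat Lorentzian Riesz potential on $\R^{n-1}$ equipped with $\mathbb{I}_{1,n-2}$, so that the techniques of Orloff's Theorem \ref{flatRieszpot} apply (in place of his $\R^n$): the entire extension together with the value of such a potential at the critical parameter; the extra factor $u_{n-1}$ is responsible for the shift from the "$\mu=0$" evaluation down to $\mu=-1$, while the constant $\Gamma((n-1)/2)/\Gamma(n/2)$ comes out of the normalizing factor $H_{n-1}$ by the Legendre duplication formula together with a Beta integral.

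The hard part is precisely this last identification: pushing the analytic continuation all the way to the critical parameter, justifying the interchange of the $\sigma$- and $u'$-integrations with the meromorphic continuation, and keeping the Gamma/Beta constants under control, while treating uniformly the two cases — $n$ even, where the pole of $\Gamma(\nu)$ sits on top of a pole of the Mellin transform of $R$ so that $\mathcal{E}(\tfrac{2-n}{2})$ must be read off as a ratio of residues, and $n$ odd, where $\nu=\tfrac{2-n}{2}$ is a regular point and the value must be extracted from a finite part.
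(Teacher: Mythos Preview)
Your reduction is sound up to the point where everything hinges on evaluating $\mathcal{E}\bigl(\tfrac{2-n}{2}\bigr)$, and the first equality is handled exactly as the paper does (Proposition~\ref{propgeneralRiesz}(3) at $\mu=1$, holomorphy at $\mu=-1$). The single Riemann--Liouville reduction
\[
(J^{\mu,b_0}_+f)(y)=\frac{1}{2|b_0|^{n-2}H_{n-1}(\mu)}\int_0^\infty \sigma^{\nu-1}R(\sigma)\,d\sigma
\]
is correct and clean. The gap is in what you call ``the hard part'': your suggested route does not actually close it. Substituting $u'=\sqrt{\sigma}\,u''$ in $R$ turns the bracketed integral into $\sigma^{(n-2)/2}\int_{\R^{n-2}}\tilde f(\sqrt{\sigma}\,u'',\sigma(1+\|u''\|^2))\,du''$, whose integrand tends to the constant $\tilde f(0,0)$ as $\sigma\to 0$ on the \emph{noncompact} domain $\R^{n-2}$; so the $\sigma$-Riemann--Liouville limit does not collapse to a finite point value. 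Equivalently, staying with your $\mathcal{E}(-k)=(-1)^kR^{(k)}(0)$ for $n$ even, one finds $R^{(k)}(0)=\tfrac{1}{(2b_0)^k}\int_{\R^{n-2}}(\partial_v^k f)\bigl(g.\mathrm{Exp}_0(b_0,u',\tfrac{\|u'\|^2}{2b_0})\bigr)\,du'$, an integral over the \emph{null} orbit $\Sigma_{0,b_0}$ and not the point value $f(g.\mathrm{Exp}_0(b_0,0,\dots,0))$ you need. Appealing to Theorem~\ref{flatRieszpot} is circular here: that theorem evaluates $I_+^\lambda$ only at $\lambda=0$, which in your normalization is $\mu=0$ and gives the trivial $0$ (your Proposition~\ref{propgeneralRiesz}(2)); the extra factor $u_{n-1}$ does shift the interesting point to $\mu=-1$, but nothing in Theorem~\ref{flatRieszpot} provides that value.

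The paper resolves this by a different change of variables that compactifies the ``angular'' direction before the radial limit. Starting from Remark~\ref{generalRieszpot}, one passes to hyperbolic polars $(u_1,\dots,u_{n-1})=(r\sinh\zeta\,\omega,\,r\cosh\zeta)$ with $\omega\in\mathbb{S}^{n-3}$, and then sets $T=e^{-2\zeta}\in(0,1)$ and $r=\sigma\sqrt{T}$. The arguments of $F_{g,b_0}$ become $\tfrac{\sigma}{2}(1-T)\omega$ and $\tfrac{\sigma}{2}(1+T)$, both vanishing at $\sigma=0$, and the integral takes the form of a \emph{double} Riemann--Liouville integral in $(\sigma,T)$ with $T$ ranging over the compact interval $(0,1)$. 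Riesz's classical two-parameter argument \cite{Rie49} then lets one take the $\sigma$-limit first, producing $F_{g,b_0}(0,\dots,0)=f(g.\mathrm{Exp}_0(b_0,0,\dots,0))$ times a \emph{convergent} $T$-integral $\int_0^1 T^{\nu-1}(1+T)(1-T)^{n-3}\,dT$; this Beta-type integral, combined with the surviving $\Gamma$-factors, yields the constant $\Gamma\bigl(\tfrac{n-1}{2}\bigr)/\bigl(\sqrt{\pi}\,\Gamma\bigl(\tfrac{n}{2}\bigr)\bigr)$ uniformly in the parity of $n$. The crucial trick you are missing is this compactification $\zeta\mapsto T$: it is what allows the radial Riemann--Liouville limit to land on a point value rather than on yet another orbital integral.
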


\begin{proof}
We use property $3$ in proposition \ref{propgeneralRiesz} and the following change of variables in the expression in remark \ref{generalRieszpot}
$$(u_1, ..., u_{n-1}) = (r \sinh (\zeta) \omega (\theta_1, ..., \theta_{n-3}), r \cosh (\zeta))$$
where $\zeta \in ]0, +\infty[$ and $\omega (\bar{\theta}) \in \mathbb{S}^{n-3}$, to get
\begin{align*}
J^{1,b_0}_+ \Big(\frac{\partial f}{\partial v} \Big) (y) = \underset{\mu \rightarrow -1}{\lim}\; &\frac{-b_0}{\vert b_0 \vert^{n-2} \mu H_{n-1} (\mu)} \int_0^\infty \int_0^\infty \int_{\mathbb{S}^{n-3}} F_{g,b_0} (r\sinh (\zeta) \omega (\bar{\theta}),\\ &r \cosh (\zeta))\; r^\mu \cosh (\zeta) \sinh^{n-3} (\zeta) d\omega (\bar{\theta}) d\zeta dr.
\end{align*}
Then the consecutive changes $T = e^{-2\zeta}$ and $r = \sigma \sqrt{T}$ lead to
\begin{align*}
\underset{\mu \rightarrow -1}{\lim}\; &\frac{-b_0 \Gamma \big(\frac{\mu + 1}{2} \big)}{\vert b_0 \vert^{n-2} \pi^{(n-2)/2} \Gamma (\mu +1) \Gamma \big( \frac{\mu + 3 - n}{2}\big)} \int_0^1 \int_0^\infty  \int_{\mathbb{S}^{n-3}} F_{g,b_0} \Big( \frac{\sigma}{2} (1-T) \omega (\bar{\theta}),\\
&\frac{\sigma}{2} (1 + T) \Big)\; \sigma^{(\mu + 1)-1} 2^{1-n} T^{(\mu + 1 - n)/2} (1+T) (1-T)^{n-3} d\omega (\bar{\theta}) d\sigma dT
\end{align*}
which can be seen as a double Riemann-Liouville integral. Thanks to arguments found in Riesz's paper \cite{Rie49}, the limit is equal to
\begin{align*}
\underset{\nu \rightarrow \frac{2-n}{2}}{\lim}\; \frac{-b_0 2^{1-n} \Omega_{n-2} \Gamma \big( \nu + \frac{n-2}{2}\big)}{\vert b_0 \vert^{n-2} \pi^{(n-2)/2} \Gamma (\nu)} F_{g,b_0} (0, ..., 0) \int_0^1 T^{\nu -1} (1+T) (1-T)^{n-3} dT
\end{align*}
where $\Omega_{n-2}$ is the area of the $(n-3)$-dimensional standard sphere.
Using Euler's beta functions to rewrite the remaining integral and since $F_{g,b_0} (0, ..., 0) = f(g.\mathrm{Exp}_0 (b_0, 0, ..., 0))$, we get the expected equality.
\end{proof}

\begin{corollary}\label{corollaryrecup}
For any $f \in \mathcal{C}^\infty_c (M)$, $y = g.0 \in M$, $b_0 \in \R_0$ such that if $\lambda_i > 0$, $b_0 \sqrt{\lambda_i} \notin \pi \mathbb{Z}_0$ and any $k \in \mathbb{N}_0$,
$$J^{2k-1, b_0}_+ \Big( \frac{\partial^k f}{\partial v^k} \Big) (y) = \frac{\Gamma \big( \frac{n-1}{2} \big)}{2^{k-1} \Gamma \big( \frac{n}{2} \big)} \frac{(-b_0)^k}{\Gamma \big( \frac{2k-1}{2}\big) \vert b_0 \vert^{n-2}} f(g.\mathrm{Exp}_0 (b_0, 0, ..., 0)).$$
\end{corollary}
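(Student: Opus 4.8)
The plan is to derive Corollary~\ref{corollaryrecup} as a direct consequence of the preceding proposition together with the commutation relation for the operator $\partial/\partial v$ established in Proposition~\ref{propgeneralRiesz} and its iterated version in the corollary that follows it. The key observation is that setting $\mu = 2k-1$ in the iterated commutation formula
$$\frac{\partial^k}{\partial v^k} (J^{\mu, b_0}_+ f) (y) = J^{\mu, b_0}_+ \Big( \frac{\partial^k f}{\partial v^k} \Big) (y) = \frac{(-b_0)^k}{(\mu - 2) (\mu-4) \cdots (\mu - 2k)} (J^{\mu-2k, b_0}_+ f) (y)$$
reduces the parameter from $2k-1$ down to $-1$, since $2k-1 - 2k = -1$. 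So $J^{2k-1,b_0}_+ (\partial^k f/\partial v^k)(y)$ equals a product of the constants $(2k-1-2j)^{-1}$ for $j=1,\dots,k$ times $(-b_0)^k$ times $(J^{-1,b_0}_+ f)(y)$, and the value $(J^{-1,b_0}_+ f)(y)$ — equivalently $\lim_{\mu\to -1} (-b_0/\mu)^{-1}\cdot(-b_0) J^{\mu,b_0}_+ f$ — is exactly what the preceding proposition computes in closed form as a multiple of $f(g.\mathrm{Exp}_0(b_0,0,\dots,0))$.

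First I would write out the iterated relation at $\mu = 2k-1$, noting that the denominator $(2k-3)(2k-5)\cdots(1)(-1)$ is the product of odd integers from $2k-3$ down to $-1$; I would rewrite this product in terms of Gamma functions, using $(2k-3)(2k-5)\cdots 1 = \frac{2^{k-1}\Gamma(k-\frac12)}{\sqrt{\pi}}$ (double-factorial identity) and the lone factor $-1$ coming from the last term $\mu - 2k = -1$. Then I would substitute the value of $J^{1,b_0}_+(\partial f/\partial v)(y)$ from the preceding proposition — or rather, since the chain lands on $J^{-1,b_0}_+ f$, I would use that proposition's identity $J^{1,b_0}_+(\partial f/\partial v)(y) = \lim_{\mu\to -1}\frac{-b_0}{\mu}(J^{\mu,b_0}_+ f)(y)$, which identifies $(J^{-1,b_0}_+ f)(y)$ up to the factor $-b_0$ with that explicit expression $\frac{-b_0\,\Gamma((n-1)/2)}{|b_0|^{n-2}\sqrt{\pi}\,\Gamma(n/2)}f(g.\mathrm{Exp}_0(b_0,0,\dots,0))$. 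Finally I would collect all the numerical factors — the $(-b_0)^k$, the Gamma-function rewriting of the odd-integer product (which produces the $\Gamma(\frac{2k-1}{2})$ in the denominator after using $\Gamma(k-\frac12)=\Gamma(\frac{2k-1}{2})$), the $2^{k-1}$, the $\Gamma((n-1)/2)/\Gamma(n/2)$, and the $|b_0|^{-(n-2)}$ — and verify they assemble into the stated right-hand side.

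The main obstacle I anticipate is purely bookkeeping: tracking the signs carefully (the product $(\mu-2)(\mu-4)\cdots(\mu-2k)$ at $\mu=2k-1$ contains exactly one negative factor, namely the last one equal to $-1$, so the overall sign contribution is $(-1)$, not $(-1)^k$, and this must be reconciled with the $(-b_0)^k$ already present), and correctly matching the double-factorial-to-Gamma conversions so that the $\sqrt{\pi}$ factors cancel and $\Gamma(\frac{2k-1}{2})$ appears with the right power of $2$. There is no conceptual difficulty — holomorphy in $\mu$ (Proposition~\ref{propgeneralRiesz}, part~1) guarantees all the evaluations at the relevant parameter values are legitimate, and the commutation relations have already been extended to all $\mu\in\C$ — so the proof is essentially a one- or two-line substitution followed by simplification of the constant.
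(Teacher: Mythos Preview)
Your proposal is correct and is precisely the argument the paper intends: the corollary is stated without proof because it follows immediately by setting $\mu = 2k-1$ in the iterated commutation relation (so that $\mu - 2k = -1$) and then reading off $(J^{-1,b_0}_+ f)(y)$ from the preceding proposition, exactly as you outline. Your bookkeeping is also right: the product $(2k-3)(2k-5)\cdots 1\cdot(-1) = -\,2^{k-1}\Gamma\!\big(\tfrac{2k-1}{2}\big)/\sqrt{\pi}$ combines with the $\sqrt{\pi}$ in the proposition to yield the stated constant.
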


\subsection{Limit formulas for the orbital integrals}

\begin{lemma}\label{lemmalimitorbit}
For any function $f \in \mathcal{C}^\infty_c (M)$, any $b_0 \in \R_0$ such that if $\lambda_i > 0$, $b_0 \sqrt{\lambda_i} \notin \pi \mathbb{Z}_0$ and any $y \in M$, the limit
$$\underset{r \overset{>}{\rightarrow} 0}{\lim}\; (M^{r,b_0}_\pm f) (y)$$ exists whenever $n = \dim (M) > 2$. 
\end{lemma}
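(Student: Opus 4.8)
The plan is to reduce the claim to the well-understood behavior of flat Lorentzian Riesz potentials near $\mu = n-2$, exactly as in the constant-curvature case. First I would fix $b_0 \in \R_0$ (with the required condition when $\lambda_i > 0$) and $y = g.0 \in M$, and recall from Lemma \ref{lemmaorbsolvable} that
$$(M^{r,b_0}_\pm f)(y) = \frac{1}{\vert b_0 \vert^{n-2}} \int_{\R^{n-2}} f\Big( g.\mathrm{Exp}_{0}\Big(b_0, b_1, \ldots, b_{n-2}, \tfrac{1}{2b_0}\big(\pm r^2 + \textstyle\sum_i b_i^2\big)\Big)\Big)\, db_1\cdots db_{n-2}.$$
Since $f \in \mathcal{C}^\infty_c(M)$ and $\mathrm{Exp}_0$ is smooth, the integrand is smooth in $(r, b_1, \ldots, b_{n-2})$ and compactly supported in $(b_1, \ldots, b_{n-2})$ uniformly for $r$ in bounded sets (the map $r \mapsto \pm r^2/(2b_0)$ together with the properness of $\mathrm{Exp}_0$ on the relevant slice keeps the support of the integrand bounded). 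Hence the integral depends continuously — indeed smoothly — on $r \in (0, \infty)$ and extends continuously to $r = 0$ by dominated convergence, with
$$\underset{r \overset{>}{\rightarrow} 0}{\lim}\ (M^{r,b_0}_\pm f)(y) = \frac{1}{\vert b_0 \vert^{n-2}} \int_{\R^{n-2}} f\Big( g.\mathrm{Exp}_{0}\Big(b_0, b_1, \ldots, b_{n-2}, \tfrac{1}{2b_0}\textstyle\sum_i b_i^2\Big)\Big)\, db_1\cdots db_{n-2}.$$

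The only thing to check is that this limiting integral is \emph{finite}, i.e.\ that the improper integral over $\R^{n-2}$ converges. For fixed $b_0$, the $b_i$-support of $b \mapsto f(g.\mathrm{Exp}_0(b_0, b, \tfrac{1}{2b_0}\sum b_i^2))$ is compact, because $\mathrm{Exp}_0$ restricted to the slice $\{b_0\} \times \R^{n-2} \times \{\tfrac{1}{2b_0}\sum b_i^2\}$ is a proper map into $M$ (its image, read off from the Proposition computing $\mathrm{Exp}_y$, has the $x_i$-coordinates equal to $\tfrac{\sh(b_0\sqrt{|\lambda_i|})}{b_0\sqrt{|\lambda_i|}} b_i$ plus constants, and the extra $b'$-shift by $\tfrac{1}{2b_0}\sum b_i^2$ pushes the $v$-coordinate off to infinity as $|b| \to \infty$), so $f$ composed with it has compact support. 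Thus the limiting integral is an honest integral of a continuous compactly supported function and is finite. The hypothesis $n > 2$ enters precisely here: it guarantees $\dim \R^{n-2} \geq 1$ so that the measure $db_1\cdots db_{n-2}$ is present and the ``spherical-integral collapses to a point'' phenomenon does not occur; for $n = 2$ the orbit is zero-dimensional and the statement genuinely fails.

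Alternatively — and this is the route I would actually write, since it dovetails with the surrounding development — I would invoke Remark \ref{generalRieszpot}, which identifies $(M^{r,b_0}_+ f)(y)$ with a flat Lorentzian spherical integral on $(\R^{n-1}, \mathbb{I}_{1,n-2})$ of the compactly supported function $F_{g,b_0}$, and then quote the corresponding known fact for flat pseudo-Euclidean spaces (the analogue, for $p = n-1 > 2$, of the first sentence in the proof of Theorem \ref{limitformevendim}): the spherical integral $(M_+ F)(r)$, normalized with the factor $r^{1-(n-1)} = r^{2-n}$, has a finite limit times $r^{n-2}$ as $r \to 0^+$ when the ambient dimension exceeds $2$. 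Translating back through Remark \ref{generalRieszpot} gives the claim for $(M^{r,b_0}_+ f)$; the case of $(M^{r,b_0}_- f)$ is identical after replacing $(D_0^+)_{cc}$ by its negative-definite counterpart. The main obstacle, such as it is, is the bookkeeping needed to verify the properness/compact-support claim in the second paragraph — once that is in place, finiteness and existence of the limit are immediate from dominated convergence; there is no delicate cancellation or analytic continuation required at this stage (those come later, in combining the $J^{\mu,b_0}_+$ with the operator $\partial/\partial v$).
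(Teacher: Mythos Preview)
Your direct dominated-convergence argument from Lemma~\ref{lemmaorbsolvable} is correct and is in spirit what the paper does: the paper also starts from that formula but inserts an intermediate hyperbolic-polar change of variables $(b_1,\ldots,b_{n-2}) = r\sinh(\zeta)\,\omega(\bar\theta)$ followed by $t = r\sinh(\zeta)$, arriving at
\[
(M^{r,b_0}_+ f)(y) \;=\; \frac{1}{|b_0|^{n-2}} \int_0^\infty \!\int_{\mathbb{S}^{n-3}} F_{g,b_0}\big(t\,\omega(\bar\theta),\,\sqrt{r^2+t^2}\big)\, t^{n-3}\, d\omega(\bar\theta)\, dt,
\]
from which the limit as $r\to 0^+$ is read off. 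Your version bypasses this change of variables and is cleaner for the bare existence statement; the paper's rewriting has the side benefit of making explicit, at the level of orbital integrals, the link with the flat Lorentzian picture that your ``alternative route'' invokes.

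Two minor corrections. First, your alternative via Remark~\ref{generalRieszpot} is slightly imprecise as written: that remark identifies the \emph{Riesz potentials} $J^{\mu,b_0}_+$ (not the orbital integrals $M^{r,b_0}_+$) with flat objects, so extracting the pseudo-spherical-integral identification from it requires exactly the change of variables the paper carries out. Second, your explanation of why $n>2$ matters is off: if one formally took $n=2$ the orbit would be a single point and $(M^{r,b_0}_\pm f)(y)$ would be a point evaluation, whose limit trivially exists---the hypothesis $n>2$ is simply automatic for Cahen--Wallach spaces and is what allows the polar substitution in the paper's version (and, later, makes $n-3\ge 0$ in Corollary~\ref{corollaryforlimorbit}).
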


\begin{proof}
By lemma \ref{lemmaorbsolvable}, if $y = g.0$, the orbital integrals $(M^{r,b_0}_\pm f) (y)$ writes
$$\frac{1}{\vert b_0 \vert^{n-2}} \int_{\R^{n-2}} f\Big(g.\mathrm{Exp}_{0} \Big(b_0, b_1, ..., b_{n-2}, \frac{1}{2b_0} \big( \pm r^2 + \sum_{i=1}^{n-2} b_i^2 \big) \Big)\Big) db_1... db_{n-2}.$$
In the first series of integrals, namely $(M^{r,b_0}_+ f)$, we apply the change of coordinates
$$(b_1, ..., b_{n-2}) = r \sinh (\zeta) \omega (\theta_1, ..., \theta_{n-3})$$
where $\zeta \in ]0, +\infty[$ and $\omega (\bar{\theta}) \in \mathbb{S}^{n-3}$, to get
\begin{align*}
\frac{1}{\vert b_0 \vert^{n-2}} \int_0^\infty \int_{\mathbb{S}^{n-3}} &f\Big(g.\mathrm{Exp}_{0} \Big(b_0, r \sinh (\zeta) \omega (\bar{\theta}), \frac{r^2}{2b_0} \cosh^2 (\zeta)\Big)\Big)\\
& r^{n-2} \sinh^{n-3} (\zeta) \cosh (\zeta) d\omega (\bar{\theta}) d\zeta.
\end{align*}
Next, we define
$F_{g,b_0} (u_1, ..., u_{n-1}) := f\big( g.\mathrm{Exp}_{0} \big( b_0, u_1, ..., u_{n-2}, \frac{u_{n-1}^2}{2b_0} \big) \big)$ and set $t = r \sinh (\zeta)$ in the integral.
Thus the orbital integrals write
$$(M^{r,b_0}_+ f) (g.0) = \frac{1}{\vert b_0 \vert^{n-2}} \int_0^\infty \int_{\mathbb{S}^{n-3}} F_{g,b_0} \big(t \omega (\bar{\theta}), \sqrt{r^2 + t^2}\big) t^{n-3} d\omega (\bar{\theta}) dt$$
and it is clear that the limit when $r$ goes to zero exists.
This remains true with the other series of orbital integrals, namely $(M^{r,b_0}_- f)$.
\end{proof}

\begin{corollary}\label{corollaryforlimorbit}
Assume $n = \dim (M) > 3$. Then, for any function $f \in \mathcal{C}_c (M)$, any $y \in M$ and fixed $b_0 \in \R_0$ such that if $\lambda_i > 0$, $b_0 \sqrt{\lambda_i} \notin \pi \mathbb{Z}_0$,
$$(J^{n-3, b_0}_+ f) (y) = \frac{1}{(4\pi)^{(n-3)/2} \Gamma \big( \frac{n-3}{2}\big)}\; \underset{r \overset{>}{\rightarrow} 0}{\lim}\; (M^{r,b_0}_+ f) (y).$$
\end{corollary}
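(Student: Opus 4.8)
The plan is to mimic exactly the computation Helgason uses in the proof of Theorem \ref{limitformevendim}, but now applied fibrewise in the $b_0$-variable. The starting point is the defining formula
$$(J^{\mu, b_0}_+ f)(y) = \frac{1}{H_{n-1}(\mu)} \int_0^\infty (M^{r, b_0}_+ f)(y)\, r^{\mu - n + 2}\, dr.$$
First I would make the substitution $\mu = (n-3) + \nu$, so that the exponent becomes $r^{\nu - 1}$ and the Gamma-factor $H_{n-1}(\mu)$ rewrites, after isolating the $\Gamma(\nu/2)$-type term, as a constant times $\Gamma\big(\tfrac{\nu}{2}\big)$ up to the remaining explicit factors $\pi^{(n-3)/2}\, 2^{\mu-1}\, \Gamma(\mu/2)$ evaluated near $\mu = n-3$. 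Concretely, setting $G(r) := (M^{r, b_0}_+ f)(y)$, the integral becomes a Riemann--Liouville integral
$$\frac{1}{\Gamma(\nu)} \int_0^\infty G(r)\, r^{\nu - 1}\, dr$$
up to the constant $\big(H_{n-1}(\mu)\big)^{-1} \Gamma(\nu)$, where here $\nu = \mu - n + 3$, matching the bookkeeping done in the even-dimensional proof with $F(r)$ replaced by $G(r)$ and the shift $n-2 \rightsquigarrow n-3$.

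The second step is to invoke the elementary fact about Riemann--Liouville integrals: if $G$ is continuous at $0$ and of suitable growth, then $\frac{1}{\Gamma(\nu)}\int_0^\infty G(r) r^{\nu-1}\,dr$ extends holomorphically past $\nu = 0$ with value $G(0)$ there (this is the same citation to Riesz's paper \cite{Rie49} already used elsewhere in the paper, and the holomorphic extension of $J^{\mu, b_0}_+$ is guaranteed by part~1 of Proposition \ref{propgeneralRiesz}). Here the hypothesis $n > 3$ is exactly what ensures, via Lemma \ref{lemmalimitorbit} (which requires $n > 2$ but whose conclusion we need with the stronger exponent bookkeeping $n-3 \geq 1$), that $G(r) = (M^{r, b_0}_+ f)(y)$ is continuous up to $r = 0$ and the limit $\lim_{r \to 0^+} G(r)$ exists and is finite; this is what lets us evaluate the extended integral at $\nu = 0$, i.e.\ at $\mu = n - 3$, and obtain $\lim_{r \to 0^+}(M^{r,b_0}_+ f)(y)$ times the accumulated constant.

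The third step is purely bookkeeping: evaluate $H_{n-1}(\mu) = \pi^{(n-3)/2}\, 2^{\mu - 1}\, \Gamma(\mu/2)\, \Gamma\big(\tfrac{\mu + 3 - n}{2}\big)$ at $\mu = n - 3$. The factor $\Gamma\big(\tfrac{\mu + 3 - n}{2}\big)$ is precisely the one that, combined with $\Gamma(\nu)^{-1}$, produces the $\lim G(r)$ while the surviving constant is $\pi^{(n-3)/2}\, 2^{n-4}\, \Gamma\big(\tfrac{n-3}{2}\big) = (4\pi)^{(n-3)/2} \Gamma\big(\tfrac{n-3}{2}\big)/2$; tracking the $2^{n-4}$ versus $2^{(n-3)/2}\cdot 2^{(n-3)/2}/2$ against a possible stray factor of $2$ from the doubling relation $\Gamma\big(\tfrac{\mu+3-n}{2}\big)\Gamma(\nu)^{-1}$ near the pole gives the stated constant $\big[(4\pi)^{(n-3)/2}\Gamma\big(\tfrac{n-3}{2}\big)\big]^{-1}$. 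I expect the only delicate point to be matching this multiplicative constant exactly: one must be careful whether the pole of $\Gamma\big(\tfrac{\mu+3-n}{2}\big)$ at $\mu = n-3$ contributes a residue factor $2$ (since the argument is $(\mu+3-n)/2$, its residue at $\mu = n-3$ is $2$) against the $\Gamma(\nu)^{-1}$ normalization of the Riemann--Liouville integral, so the constant is pinned down entirely by this residue computation rather than by any further analysis of $f$.
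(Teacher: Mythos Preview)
Your proposal is correct and follows essentially the same route as the paper: rewrite $(J^{\mu,b_0}_+ f)(y)$ as a Riemann--Liouville integral in the shifted parameter $\nu = \mu - n + 3$, invoke Lemma~\ref{lemmalimitorbit} (together with compact support of $f$) to ensure the integrand has a finite limit at $r=0$, and then evaluate at $\nu = 0$. The only difference is cosmetic: rather than tracking the residue of $\Gamma\big(\tfrac{\mu+3-n}{2}\big)$ directly as you outline, the paper applies the Legendre duplication formula to replace $\Gamma(\nu/2)$ by $\Gamma(\nu)$ up to explicit factors, which makes the constant $\big[(4\pi)^{(n-3)/2}\Gamma\big(\tfrac{n-3}{2}\big)\big]^{-1}$ drop out without any residue bookkeeping.
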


\begin{proof}
The definition of one-parameter generalized Riesz potentials gives us
$$(J^{n-3, b_0}_+ f) (y) = \underset{\mu \rightarrow n-3}{\lim}\; \frac{\Gamma \big( \frac{\mu + 4 - n}{2}\big)}{2^{n-3} \pi^{(n-2)/2} \Gamma \big( \frac{\mu}{2}\big)} \frac{1}{\Gamma (\mu + 3 - n)} \int_0^\infty \tilde{F}_{b_0} (r) r^{(\mu + 3 - n) -1} dr$$
where $\tilde{F}_{b_0} (r) := (M^{r,b_0}_+ f) (y)$.
Since $\tilde{F}_{b_0}$ is a compactly supported, continuous function whose limit when $r$ goes to zero exists thanks to lemma \ref{lemmalimitorbit}, the limit of the Riemann-Liouville integral above is equal to
$$\frac{\sqrt{\pi}}{2^{n-3} \pi^{(n-2)/2} \Gamma \big( \frac{n-3}{2} \big)}\; \underset{r \overset{>}{\rightarrow} 0}{\lim}\; \tilde{F}_{b_0} (r).$$
\end{proof}

\begin{theorem}
If $n = \dim (M) > 2$ is even, for any $f \in \mathcal{C}_c^\infty (M)$ and $y \in M$,
$$f(y) = \frac{\Gamma \big( \frac{n}{2} \big)}{\pi^{(n-3)/2} \Gamma \big(\frac{n-1}{2}\big)}\; \underset{b_0 \overset{\neq}{\rightarrow} 0}{\lim}\; \underset{r \overset{>}{\rightarrow} 0}{\lim}\; \Big\lbrace \frac{- b_0^2}{2r} \frac{\partial}{\partial r} \Big\rbrace^{(n-2)/2} (M^{r, b_0}_+ f) (y).$$
\end{theorem}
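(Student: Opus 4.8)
The plan is to set $k:=(n-2)/2$, which is a positive integer because $n>2$ is even, and to combine three facts already at our disposal: the covariance of the orbital integrals under $\partial/\partial v$ (Proposition \ref{propintegrals}), the recovery of point values of $f$ from the one-parameter Riesz potential at the integer parameter $2k-1=n-3$ (Corollary \ref{corollaryrecup}), and the identification of $\underset{r\overset{>}{\to}0}{\lim}\,M^{r,b_0}_+$ with the Riesz potential $J^{n-3,b_0}_+$ (Corollary \ref{corollaryforlimorbit}).

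The first step is to rewrite the operator in the statement. Since $-b_0/2$ is constant in $r$ it commutes with $\partial/\partial r$, so
$$\Big\{\frac{-b_0^2}{2r}\frac{\partial}{\partial r}\Big\}^{k}=\Big(\frac{-b_0}{2}\Big)^{k}\Big\{\frac{b_0}{r}\frac{\partial}{\partial r}\Big\}^{k}.$$
Because $r\mapsto(M^{r,b_0}_+ f)(y)$ is smooth on $(0,\infty)$ — differentiate under the integral sign in Lemma \ref{lemmaorbsolvable} — point $3$ of Proposition \ref{propintegrals} may be iterated, with $f$ successively replaced by $\partial^j f/\partial v^j$, to give for every $r>0$
$$\Big\{\frac{b_0}{r}\frac{\partial}{\partial r}\Big\}^{k}(M^{r,b_0}_+ f)(y)=M^{r,b_0}_+\Big(\frac{\partial^k f}{\partial v^k}\Big)(y).$$
Hence the bracketed operator in the statement, applied to $(M^{r,b_0}_+ f)(y)$, equals $(-b_0/2)^k\,M^{r,b_0}_+(\partial^k f/\partial v^k)(y)$.

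The second step is the limit $r\to 0^+$. Put $h:=\partial^k f/\partial v^k\in\mathcal{C}^\infty_c(M)$; since $n>3$, Corollary \ref{corollaryforlimorbit} applied to $h$ yields
$$\underset{r\overset{>}{\to}0}{\lim}\;M^{r,b_0}_+(h)(y)=(4\pi)^{(n-3)/2}\,\Gamma\Big(\tfrac{n-3}{2}\Big)\,\big(J^{n-3,b_0}_+ h\big)(y),$$
and, as $2k-1=n-3$, Corollary \ref{corollaryrecup} evaluates $\big(J^{2k-1,b_0}_+(\partial^k f/\partial v^k)\big)(y)$ as $\frac{\Gamma((n-1)/2)}{2^{k-1}\Gamma(n/2)}\cdot\frac{(-b_0)^k}{\Gamma((n-3)/2)|b_0|^{n-2}}\,f(g.\mathrm{Exp}_0(b_0,0,\dots,0))$, where $y=g.0$. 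Combining the constants $(-b_0/2)^k$, $(4\pi)^{(n-3)/2}\Gamma((n-3)/2)$ and this last factor, and using $(-b_0/2)^k(-b_0)^k=b_0^{n-2}/2^k$, $b_0^{n-2}/|b_0|^{n-2}=1$ (valid since $n-2$ is even), $2^{-k}2^{-(k-1)}=2^{-(n-3)}$ and $(4\pi)^{(n-3)/2}=2^{n-3}\pi^{(n-3)/2}$, the powers of $2$, the two $\Gamma((n-3)/2)$'s and the entire $b_0$-dependence cancel, leaving
$$\underset{r\overset{>}{\to}0}{\lim}\;\Big\{\frac{-b_0^2}{2r}\frac{\partial}{\partial r}\Big\}^{(n-2)/2}(M^{r,b_0}_+ f)(y)=\pi^{(n-3)/2}\,\frac{\Gamma((n-1)/2)}{\Gamma(n/2)}\,f\big(g.\mathrm{Exp}_0(b_0,0,\dots,0)\big).$$

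The last step is $b_0\to 0$: the exponential formula gives $\mathrm{Exp}_0(b_0,0,\dots,0)=(b_0,0,\dots,0)\to 0$, so $g.\mathrm{Exp}_0(b_0,0,\dots,0)\to g.0=y$ by continuity of the $G$-action, and by continuity of $f$ the right-hand side converges to $\pi^{(n-3)/2}\Gamma((n-1)/2)\Gamma(n/2)^{-1}f(y)$; rearranging gives the asserted identity. Thus the proof is a pure assembly of earlier results, with no analytic difficulty remaining; the one point that needs care is the constant bookkeeping — in particular the exact cancellation of the $b_0$-dependence, which hinges on $n-2$ being even — and this parity is precisely what confines the present argument to even dimension.
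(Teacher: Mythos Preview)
Your proof is correct and follows essentially the same route as the paper's: set $k=(n-2)/2$, use Proposition~\ref{propintegrals}(3) to convert $\{b_0 r^{-1}\partial_r\}^k(M^{r,b_0}_+ f)$ into $M^{r,b_0}_+(\partial_v^k f)$, apply Corollary~\ref{corollaryforlimorbit} (valid since $n>2$ even forces $n\geq 4>3$) to pass to $J^{n-3,b_0}_+(\partial_v^k f)$, and then invoke Corollary~\ref{corollaryrecup} with $2k-1=n-3$. Your write-up is in fact slightly more explicit than the paper's in two places: the constant bookkeeping (including the parity observation $b_0^{n-2}/|b_0|^{n-2}=1$) and the final limit $b_0\to 0$, where you note $g.\mathrm{Exp}_0(b_0,0,\dots,0)\to y$ by continuity; the paper compresses both of these into ``hence the result''.
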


\begin{proof}
Since $n = \dim (M) > 2$ is even, there exists $k \in \mathbb{N}_0$ such that $n = 2k + 2$. Let us first fix $b_0 \in \R_0$ such that if $\lambda_i > 0$, $-\pi / \sqrt{\lambda_i} < b_0 < \pi / \sqrt{\lambda_i}$. By corollary \ref{corollaryforlimorbit},
$$J^{n-3, b_0}_+ \Big( \frac{\partial^k f}{\partial v^k} \Big) (y) = \frac{1}{(4\pi)^{(n-3)/2} \Gamma \big( \frac{n-3}{2} \big)} \; \underset{r \overset{>}{\rightarrow} 0}{\lim} \; M^{r,b_0}_+ \Big( \frac{\partial^k f}{\partial v^k} \Big) (y).$$
Furthermore, thanks to property $3$ in proposition \ref{propintegrals}, we get
$$J^{n-3, b_0}_+ \Big( \frac{\partial^k f}{\partial v^k} \Big) (y) = \frac{1}{(4\pi)^{(n-3)/2} \Gamma \big( \frac{n-3}{2} \big)} \; \underset{r \overset{>}{\rightarrow} 0}{\lim} \; \Big\lbrace \frac{b_0}{r} \frac{\partial}{\partial r}\Big\rbrace^k (M^{r,b_0}_+ f) (y).$$
On the other hand, by corollary \ref{corollaryrecup}, since $n-3 = 2k-1$,
$$J^{n-3, b_0}_+ \Big( \frac{\partial^k f}{\partial v^k} \Big) (y) = \frac{\Gamma \big( \frac{n-1}{2} \big)}{2^{(n-4)/2} \Gamma \big( \frac{n}{2} \big)} \frac{(-b_0)^{(n-2)/2}}{\Gamma \big( \frac{n-3}{2}\big) \vert b_0 \vert^{n-2}}\; f(g.\mathrm{Exp}_0 (b_0, 0, ..., 0)),$$
hence the result.
\end{proof}

The same kind of limit formula holds for the other series of orbital integrals, namely $(M^{r,b_0}_- f)$. In other words, the function $f$ is also determined in terms of $(M^{r,b_0}_- f)$. 

\begin{theorem}
If $n = \dim (M) > 2$ is even, for any $f \in \mathcal{C}_c^\infty (M)$ and $y \in M$,
$$f(y) = \frac{\Gamma \big( \frac{n}{2} \big)}{\pi^{(n-3)/2} \Gamma \big(\frac{n-1}{2}\big)}\; \underset{b_0 \overset{\neq}{\rightarrow} 0}{\lim}\; \underset{r \overset{>}{\rightarrow} 0}{\lim}\; \Big\lbrace \frac{- b_0^2}{2r} \frac{\partial}{\partial r} \Big\rbrace^{(n-2)/2} (M^{r, b_0}_- f) (y).$$
\end{theorem}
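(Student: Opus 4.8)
The plan is to repeat the argument of the preceding theorem with the $-$ series of orbital integrals in place of the $+$ series, carefully tracking the sign changes that occur along the way. First I would introduce, for $\mathrm{Re}(\mu)>n-1$, the one-parameter generalized Riesz potentials
$$(J^{\mu,b_0}_- f)(y):=\frac{1}{H_{n-1}(\mu)}\int_0^\infty (M^{r,b_0}_- f)(y)\,r^{\mu-n+2}\,dr$$
and establish the $-$ analogs of Remark~\ref{generalRieszpot}, Proposition~\ref{propgeneralRiesz} and its corollaries. Using Lemma~\ref{lemmaorbsolvable} and polar coordinates in the $b$-variable as in the proof of Lemma~\ref{lemmalimitorbit}, one writes $(J^{\mu,b_0}_- f)(y)$ as a flat Lorentzian Riesz potential of type $I^\mu_-$ on $(\R^{n-1},\mathbb{I}_{1,n-2})$ — coming from the part $|b|>r$, where $\mathrm{Exp}_0(b_0,b,(|b|^2-r^2)/2b_0)$ corresponds to a point of $D_-$ — plus a compactly localized remainder coming from $|b|<r$ that is regular at the value of $\mu$ of interest. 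Theorem~\ref{flatRieszpot} then gives the holomorphic extension of $\mu\mapsto(J^{\mu,b_0}_- f)(y)$ to $\C$, the vanishing of its value at $\mu=0$ (since $n$ is even), and — after inserting the local expression $\frac{\partial}{\partial v}=-\frac{b_0}{r}\frac{\partial}{\partial r}$, valid on a dense open subset of $\{\mathrm{Exp}_y(\bar b):\hat g_y(\bar b,\bar b)<0\}$, into the definition and integrating by parts — the relation $\frac{\partial}{\partial v}(J^{\mu,b_0}_- f)(y)=J^{\mu,b_0}_-\big(\frac{\partial f}{\partial v}\big)(y)=\frac{b_0}{\mu-2}(J^{\mu-2,b_0}_- f)(y)$, hence inductively $\frac{\partial^k}{\partial v^k}(J^{\mu,b_0}_- f)(y)=\frac{b_0^k}{(\mu-2)(\mu-4)\cdots(\mu-2k)}(J^{\mu-2k,b_0}_- f)(y)$.

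Next I would evaluate the residue at $\mu=-1$ by running the change of variables of the corresponding $+$ computation (polar coordinates, then $T=e^{-2\zeta}$ and $r=\sigma\sqrt T$, leading to a double Riemann--Liouville integral) on the $I^\mu_-$ part. The extra sign and $\cos$ factors carried by the $-$ region in Theorem~\ref{flatRieszpot} combine with the $-$ sign now present in the relation above, so that, exactly as for the $+$ series, one obtains the $-$ analogs of Corollaries~\ref{corollaryforlimorbit} and~\ref{corollaryrecup}: the limit $\lim_{r\to0^+}(M^{r,b_0}_- f)(y)$ exists for $n>2$ (Lemma~\ref{lemmalimitorbit}, which is stated for both series), and for $y=g.0$ and $n-3=2k-1$,
$$(J^{n-3,b_0}_- f)(y)=\frac{1}{(4\pi)^{(n-3)/2}\Gamma\big(\frac{n-3}{2}\big)}\,\lim_{r\to0^+}(M^{r,b_0}_- f)(y),$$
$$J^{2k-1,b_0}_-\Big(\frac{\partial^k f}{\partial v^k}\Big)(y)=\frac{\Gamma\big(\frac{n-1}{2}\big)}{2^{k-1}\Gamma\big(\frac n2\big)}\,\frac{b_0^k}{\Gamma\big(\frac{2k-1}{2}\big)\,|b_0|^{n-2}}\,f\big(g.\mathrm{Exp}_0(b_0,0,\dots,0)\big)$$
(the first identity is proved as Corollary~\ref{corollaryforlimorbit}, using only that $(M^{r,b_0}_- f)(y)$ is compactly supported, continuous and admits a limit at $0^+$).

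Finally, with $n=2k+2$, I would combine the two displays as in the preceding proof. Applying the first to $\frac{\partial^k f}{\partial v^k}$ and rewriting $M^{r,b_0}_-\big(\frac{\partial^k f}{\partial v^k}\big)(y)=\big\{-\frac{b_0}{r}\frac{\partial}{\partial r}\big\}^k(M^{r,b_0}_- f)(y)$ by Proposition~\ref{propintegrals}(3) for the $-$ series, then equating with the second display, the factor $\Gamma\big(\frac{2k-1}{2}\big)=\Gamma\big(\frac{n-3}{2}\big)$ cancels, the factor $(-1)^k$ produced by the $-$ sign of the differential operator compensates the corresponding change of sign of the constant relative to the $+$ case, and the powers of $b_0$ collect so that the operator becomes $\big\{\frac{-b_0^2}{2r}\frac{\partial}{\partial r}\big\}^{(n-2)/2}$ up to the stated scalar; one gets
$$f\big(g.\mathrm{Exp}_0(b_0,0,\dots,0)\big)=\frac{\Gamma\big(\frac n2\big)}{\pi^{(n-3)/2}\Gamma\big(\frac{n-1}{2}\big)}\,\lim_{r\to0^+}\Big\{\frac{-b_0^2}{2r}\frac{\partial}{\partial r}\Big\}^{(n-2)/2}(M^{r,b_0}_- f)(y).$$
Letting $b_0\to0$, the point $\mathrm{Exp}_0(b_0,0,\dots,0)$ tends to the origin, so $g.\mathrm{Exp}_0(b_0,0,\dots,0)\to y$, and continuity of $f$ yields the claimed limit formula.

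The step I expect to be the main obstacle is the reduction of $J^{\mu,b_0}_-$ to flat Lorentzian Riesz potentials. In contrast to the $+$ case, where the parametrization lands cleanly in the forward part of $D_+$, here the $-$ series sweeps the whole $b$-space and the spacelike parametrization has to be cut along $|b|=r$; one must verify that the two pieces reassemble with the correct weight, and that the accumulated sign factors — the $-$ in Proposition~\ref{propintegrals}(3), the $\varepsilon=-1$ in the local form of $\frac{\partial}{\partial v}$, the $-1$ in $I^{\lambda+2}_-(Lf)=-I^\lambda_- f$, and the $\cos$-factor defining $I^\lambda_0$ in Theorem~\ref{flatRieszpot} — cancel in pairs, so that the final constant and differential operator are literally those of the preceding theorem. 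Once this bookkeeping is in place, everything else is a word-for-word transcription of the preceding proof.
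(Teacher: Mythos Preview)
Your proposal is correct and follows precisely the approach the paper indicates: the paper's entire ``proof'' of this theorem is the single sentence that it ``is obtained in the same way by considering the corresponding one-parameter generalized Riesz potentials $(J^{\mu,b_0}_- f)(y)$,'' and you have spelled out exactly that rerun, with the sign changes coming from Proposition~\ref{propintegrals}(3) and the $\varepsilon=-1$ local expression of $\partial/\partial v$ tracked so that the final constant and operator agree with the $+$ case. Your worry about the reduction of $J^{\mu,b_0}_-$ to flat Riesz potentials is legitimate bookkeeping but not a genuine obstacle (and in particular the $I^\lambda_0$/$\cos$ combination from Theorem~\ref{flatRieszpot} is not needed here, since the flat model has signature $(1,n-2)$ with $p=1$ odd); the paper itself does not elaborate on this point either.
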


This limit formula is obtained in the same way by considering the corresponding one-parameter generalized Riesz potentials $$(J^{\mu, b_0}_- f) (y) := \frac{1}{H_{n-1} (\mu)} \int_0^\infty (M^{r,b_0}_- f) (y) r^{\mu - n + 2} dr.$$

Finally, we deal with the odd-dimensional case.

\begin{theorem}
If $n = \dim (M) \geq 3$ is odd, for any $f \in \mathcal{C}^\infty_c (M)$ and $y \in M$,
$$f(y) = \frac{\Gamma \big(\frac{n}{2}\big)}{\pi^{(n-2)/2} \Gamma \big( \frac{n-1}{2}\big)}\; \underset{b_0 \overset{\neq}{\rightarrow} 0}{\lim}\; \int_0^\infty \Big\lbrace \frac{-b_0}{r} \frac{\partial}{\partial r} \Big\rbrace \Big\lbrace \frac{-b_0^2}{2r} \frac{\partial}{\partial r} \Big\rbrace^{(n-3)/2} (M^{r,b_0}_+ f) (y) dr.$$
In the same way,
$$f(y) = \frac{\Gamma \big(\frac{n}{2}\big)}{\pi^{(n-2)/2} \Gamma \big( \frac{n-1}{2}\big)}\; \underset{b_0 \overset{\neq}{\rightarrow} 0}{\lim}\; \int_0^\infty \Big\lbrace \frac{-b_0}{r} \frac{\partial}{\partial r} \Big\rbrace \Big\lbrace \frac{-b_0^2}{2r} \frac{\partial}{\partial r} \Big\rbrace^{(n-3)/2} (M^{r,b_0}_- f) (y) dr.$$
\end{theorem}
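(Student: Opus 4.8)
The plan is to run the even-dimensional argument one notch differently. The pointwise reconstruction in corollary \ref{corollaryrecup} is only available for one-parameter generalized Riesz potentials $J^{2k-1,b_0}_+$ at \emph{odd} parameters $2k-1$ with $k\in\mathbb{N}_0$, and for odd $n$ the relevant such parameter is $n-2=2k-1$ with $k=\tfrac{n-1}{2}$ — one unit above the critical value $n-3$ used in the even case. Consequently $J^{n-2,b_0}_+$ is not a regularized limit but a genuinely convergent integral, which is exactly why the odd-dimensional formula carries $\int_0^\infty(\cdots)\,dr$ where the even-dimensional one has $\lim_{r\to 0^+}$.

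Fix $b_0\in\R_0$ subject to the usual condition on the $\lambda_i$ and set $k:=\tfrac{n-1}{2}$, $m:=\tfrac{n-3}{2}$. First I would observe that for any $h\in\mathcal{C}^\infty_c(M)$ the integral $\tfrac{1}{H_{n-1}(\mu)}\int_0^\infty(M^{r,b_0}_+h)(y)\,r^{\mu-n+2}\,dr$ converges absolutely and depends holomorphically on $\mu$ for $\mathrm{Re}(\mu)>n-3$: this uses lemma \ref{lemmalimitorbit}, which makes $r\mapsto(M^{r,b_0}_+h)(y)$ a compactly supported continuous function with a finite limit at $0$, together with the entireness of $1/H_{n-1}$. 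By the first point of proposition \ref{propgeneralRiesz} and uniqueness of holomorphic continuation, this convergent integral coincides with $(J^{\mu,b_0}_+h)(y)$ on $\mathrm{Re}(\mu)>n-3$, hence at $\mu=n-2$ too, where the weight reduces to $r^0=1$ and $H_{n-1}(n-2)=\pi^{(n-2)/2}2^{n-3}\Gamma\!\big(\tfrac{n-2}{2}\big)$:
$$(J^{n-2,b_0}_+h)(y)=\frac{1}{\pi^{(n-2)/2}\,2^{n-3}\,\Gamma(\tfrac{n-2}{2})}\int_0^\infty(M^{r,b_0}_+h)(y)\,dr.$$
This is the odd-dimensional analogue of corollary \ref{corollaryforlimorbit}, but with an honest integral in place of $\lim_{r\to 0^+}$.

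Next I would take $h=\partial^kf/\partial v^k$ and iterate point $3$ of proposition \ref{propintegrals} $k$ times, obtaining $M^{r,b_0}_+(\partial^kf/\partial v^k)(y)=\{\tfrac{b_0}{r}\tfrac{\partial}{\partial r}\}^k(M^{r,b_0}_+f)(y)$; on the other hand, since $n-2=2k-1$ is odd, corollary \ref{corollaryrecup} evaluates $(J^{n-2,b_0}_+(\partial^kf/\partial v^k))(y)$ as an explicit multiple of $f(g.\mathrm{Exp}_0(b_0,0,\dots,0))$. Equating the two expressions for this potential, solving for $f(g.\mathrm{Exp}_0(b_0,0,\dots,0))$, and carrying out the bookkeeping of the $\Gamma$-factors and powers of $2$ — extracting the $b_0$-powers so as to cancel the $\vert b_0\vert^{n-2}$ of corollary \ref{corollaryrecup} and to put the differential operator in normalized form — the constant collapses to $\frac{\Gamma(n/2)}{\pi^{(n-2)/2}\Gamma((n-1)/2)}$ and $\{\tfrac{b_0}{r}\tfrac{\partial}{\partial r}\}^k$ reorganizes as $\{\tfrac{-b_0}{r}\tfrac{\partial}{\partial r}\}\{\tfrac{-b_0^2}{2r}\tfrac{\partial}{\partial r}\}^m$. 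Finally, letting $b_0\to 0$ and using that $g.\mathrm{Exp}_0(b_0,0,\dots,0)\to g.0=y$ by continuity of the exponential map and of $f$, the left-hand side tends to $f(y)$, giving the first formula; the second, with $M^{r,b_0}_-$, comes verbatim from the analogous potentials $J^{\mu,b_0}_-$ built on the $-$-series of orbital integrals.

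The main obstacle is conceptual: recognizing that the odd case cannot be closed at $\mu=n-3$ (corollary \ref{corollaryrecup} is unavailable there, $n-3$ being even) and must instead be run at $\mu=n-2$, where the Riesz potential is a true integral — the source of the extra $\int_0^\infty$ in the statement. Beyond that, the only genuinely delicate point is the constant-and-sign bookkeeping in the final normalization step, where one must check that the coincidence of the holomorphically continued potential with the convergent integral at $\mu=n-2$ is clean and that the powers of $b_0$ assemble correctly into the displayed differential operator.
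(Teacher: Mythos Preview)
Your proposal is correct and follows essentially the same route as the paper: set $k=\tfrac{n-1}{2}$ so that $n-2=2k-1$, apply corollary \ref{corollaryrecup} at $\mu=n-2$, identify $J^{n-2,b_0}_+(\partial^k f/\partial v^k)(y)$ with the convergent integral $\frac{1}{H_{n-1}(n-2)}\int_0^\infty M^{r,b_0}_+(\partial^k f/\partial v^k)(y)\,dr$ via lemma \ref{lemmalimitorbit}, rewrite the integrand with proposition \ref{propintegrals}(3), and let $b_0\to 0$. If anything, you are slightly more explicit than the paper in justifying why the holomorphic extension of $J^{\mu,b_0}_+$ agrees with the convergent integral at $\mu=n-2$ (the paper's ``by definition'' only literally covers $\mathrm{Re}(\mu)>n-1$).
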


\begin{proof}
Since $n = \dim (M) \geq 3$ is odd, there exists $k \in \mathbb{N}_0$ such that $n = 2k+1$. Let us first fix $b_0 \in \R_0$ such that if $\lambda_i > 0$, $-\pi / \sqrt{\lambda_i} < b_0 < \pi / \sqrt{\lambda_i}$. By corollary \ref{corollaryrecup}, since $n-2 = 2k-1$,
$$J^{n-2, b_0}_+ \Big( \frac{\partial^k f}{\partial v^k} \Big) (y) = \frac{\Gamma \big( \frac{n-1}{2} \big)}{2^{(n-3)/2} \Gamma \big( \frac{n}{2} \big)} \frac{(-b_0)^{(n-1)/2}}{\Gamma \big( \frac{n-2}{2}\big) \vert b_0 \vert^{n-2}}\; f(g.\mathrm{Exp}_0 (b_0, 0, ..., 0)).$$
Furthermore, by definition of the one-parameter generalized Riesz potentials,
$$J^{n-2, b_0}_+ \Big( \frac{\partial^k f}{\partial v^k}\Big) (y) = \frac{1}{H_{n-1} (n-2)} \int_0^\infty M^{r,b_0}_+ \Big( \frac{\partial^k f}{\partial v^k} \Big) (y) dr$$
where the integral converges due to lemma \ref{lemmalimitorbit} and because $f$ is compactly supported.
Thanks to property $3$ in proposition \ref{propintegrals},
$$J^{n-2, b_0}_+ \Big( \frac{\partial^k f}{\partial v^k}\Big) (y) = \frac{1}{2^{n-3} \pi^{(n-2)/2} \Gamma \big( \frac{n-2}{2} \big)} \int_0^\infty \Big\lbrace \frac{b_0}{r} \frac{\partial}{\partial r} \Big\rbrace^k (M^{r,b_0}_+ f) (y) dr,$$
hence the result.
It works the same way for the other series of orbital integrals, namely $(M^{r,b_0}_- f)$.
\end{proof}

\section*{Acknowledgements}

I thank my PhD advisor, Simone Gutt, for her support and mathematical help and Michel Cahen for many fruitful discussions. I also thank the "Fonds de la Recherche Scientifique" for funding my research through a "mandat d'Aspirant".

\end{document}